\def\@tocline#1#2#3#4#5#6#7{\relax
  \ifnum #1>\c@tocdepth 
  \else
    \par \addpenalty\@secpenalty\addvspace{#2}%
    \begingroup \hyphenpenalty\@M
    \@ifempty{#4}{%
      \@tempdima\csname r@tocindent\number#1\endcsname\relax
    }{%
      \@tempdima#4\relax
    }%
    \parindent\z@ \leftskip#3\relax \advance\leftskip\@tempdima\relax
    \rightskip\@pnumwidth plus4em \parfillskip-\@pnumwidth
    #5\leavevmode\hskip-\@tempdima
      \ifcase #1
       \or\or \hskip 1em \or \hskip 2em \else \hskip 3em \fi%
      #6\nobreak\relax
      \dotfill
      \hbox to\@pnumwidth{\@tocpagenum{#7}}
    \par
    \nobreak
    \endgroup
  \fi}
\newtheorem{theorem}{Theorem}[section]
\newtheorem{lemma}[theorem]{Lemma}
\newtheorem{proposition}[theorem]{Proposition}
\theoremstyle{definition}
\newtheorem{definition}[theorem]{Definition}
\theoremstyle{remark}
\newcommand{\R}{{\mathbb R}}
\newcommand{\beqn}{\begin{eqnarray}}
\newcommand{\eeqn}{\end{eqnarray}}   
\newcommand{\beq}{\begin{eqnarray*}}
\newcommand{\eeq}{\end{eqnarray*}}
\newcommand{\be}{\small\begin{equation}}
\newcommand{\bel}[1]{\begin{equation}\label{#1}}
\newcommand{\ee}{\end{equation}\normalsize}
\newcommand{\BA}{\begin{array}}
\newcommand{\EA}{\end{array}}
\newcommand{\BAN}{\renewcommand{\arraystretch}{1.2}
\setlength{\arraycolsep}{2pt}\begin{array}}
\newcommand{\BAV}[2]{\renewcommand{\arraystretch}{#1}
\setlength{\arraycolsep}{#2}\begin{array}}
\newcommand{\BSA}{\begin{subarray}}
\newcommand{\ESA}{\end{subarray}}
\newcommand{\BAL}{\begin{aligned}}
\newcommand{\EAL}{\end{aligned}}
\newcommand{\forevery}{\quad \forall}
\newcommand{\norm}[1]{\left \|#1\right \|}
\newcommand{\supp}{\mathrm{supp}\,}
\newcommand{\dist}{\mathrm{dist}\,}
\newcommand{\sign}{\mathrm{sign}}
\newcommand{\diam}{\mathrm{diam}\,}
\newcommand{\sbs}{\subset}
\def\ga{\alpha}
            \def\gl{\lambda}
\def\gm{\mu}                 
       \def\gt{\tau}
      \def\gw{\omega}
\def\Gl{\Lambda}          
\def\Gw{\Omega}              
   \def\CO{{\mathcal O}}   
      \def\CF{{\mathcal F}}
\def\BBG {\mathbb G}       
   \def\BBN {\mathbb N}    
   \def\BBR {\mathbb R}    \def\BBS {\mathbb S}
\def\BBT {\mathbb T}
\newcommand{\xa}{\alpha}
\newcommand{\xb}{\beta}
\newcommand{\xg}{\gamma}
\newcommand{\xd}{\delta}
\newcommand{\xD}{\Delta}
\newcommand{\xe}{\varepsilon}
\newcommand{\xz}{\zeta}
\newcommand{\xk}{\kappa}
\newcommand{\xl}{\lambda}
\newcommand{\xL}{\Lambda}
\newcommand{\xm}{\mu}
\newcommand{\xn}{\nu}
\newcommand{\xr}{\rho}
\newcommand{\xs}{\sigma}
\newcommand{\xf}{\phi}
\newcommand{\xF}{\Phi}
\newcommand{\xO}{\Omega}
\newcommand{\myfrac}[2]{{\displaystyle \frac{#1}{#2} }}
\def\bal#1\eal{\small\begin{align*}#1\end{align*}\normalsize}
\def\ba#1\ea{\small\begin{align}#1\end{align}\normalsize}
\numberwithin{equation}{section}
\def \dd {\mathrm{d}}
\def\Xint#1{\mathchoice
{\XXint\displaystyle\textstyle{#1}}%
{\XXint\textstyle\scriptstyle{#1}}%
{\XXint\scriptstyle\scriptscriptstyle{#1}}%
{\XXint\scriptscriptstyle\scriptscriptstyle{#1}}%
\!\int}
\def\XXint#1#2#3{{\setbox0=\hbox{$#1{#2#3}{\int}$}
\vcenter{\hbox{$#2#3$}}\kern-.5\wd0}}
\def\dashint{\Xint-}
\newcommand{\1}{\textbf{1}}
\DeclareMathOperator*{\essliminf}{ess\,lim\,inf}
\DeclareMathOperator*{\essinf}{ess\,inf}
\begin{document}
\title[Nonlinear nonlocal equations]{Nonlinear nonlocal equations involving subcritical or power nonlinearities and measure data }
\author{Konstantinos T. Gkikas}
\address{Konstantinos T. Gkikas, Department of Mathematics, University of the Aegean, 832 00 Karlovassi, Samos,
Greece\newline
Department of Mathematics, National and Kapodistrian University of Athens, 15784 Athens, Greece.}
\email{kgkikas@aegean.gr}

\date{\today}

\begin{abstract}
Let $s\in(0,1),$ $1<p<\frac{N}{s}$ and $\xO\subset\BBR^N$ be an open bounded set. In this work we study the existence of solutions to problems ($E_\pm$) $Lu\pm g(u)=\xm$ and $u=0$ a.e. in $\BBR^N\setminus\xO,$ where $g\in C(\BBR)$ is a nondecreasing function, $\xm$ is a bounded Radon measure on $\xO$ and $L$ is an integro-differential operator with order of differentiability $s\in(0,1)$ and summability $p\in(1,\frac{N}{s}).$ More precisely, $L$ is a fractional $p-$Laplace type operator. We establish sufficient conditions for the solvability of problems ($E_\pm$). In the particular case $g(t)=|t|^{\xk-1}t;$ $\xk>p-1,$ these conditions are expressed in terms of Bessel capacities.

	\medskip
	
	\noindent\textit{Key words:} Fractional $p-$Laplace operator, critical exponents, Bessel capacities, Wolff potentials
	
	\medskip
	
	\noindent\textit{2020 Mathematics Subject Classification: } 35R11, 35J60, 35R06, 35R05 
	
\end{abstract}

\maketitle

\date{\today}

\tableofcontents
\section{Introduction}
Let $\xO\subset\BBR^N$ be an open bounded domain, $s\in(0,1)$ and $1<p<\frac{N}{s}.$ In this article we are concerned with the existence of very weak solutions to the quasilinear nonlocal problems

\ba\tag{$P_\pm$}\label{main}
\left\{
\BAL
L u \pm g(u)&=\xm\quad&&\text{in}\;\; \xO\\
u&=0\quad &&\text{in}\;\; \BBR^N\setminus\xO,
\EAL\right.
\ea
where $\xm$ is a bounded Radon measure on $\xO$ and $g\in C(\BBR)$ is a nondecreasing function such that $g(0)=0.$ Here,  the nonlocal operator $L$ is defined by
\bal
Lu(x):=P.V.\int_{\BBR^N}|u(x)-u(y)|^{p-2}(u(x)-u(y))K(x,y){\dd} y\quad\forall x\in\xO,
\eal
where the symbol P.V. stands for the principle value integral and $K:\BBR^N\times\BBR^N\to \BBR$  is a measurable and symmetric (i.e. $K(x,y)=K(y,x)$) function. Note that if $K(x,y)\equiv|x-y|^{-N-sp}$ then $L$ coincides with the standard fractional $p$-Laplace operator $(-\xD)^s_p.$ 

Throughout this work, we assume that there exists a positive constant $\xL_K\geq1$ such that the following ellipticity condition holds 
\bal
\xL_K^{-1}|x-y|^{-N-sp}\leq K(x,y)\leq \xL_K|x-y|^{-N-sp}\qquad \forall (x,y)\in \BBR^{N}\times\BBR^N\;\;\text{and} \;\;x\neq y.
\eal
In addition, we denote by $\mathfrak{M}_b(\xO)$ the space of Radon measures on $\BBR^N$ such that $\xm(\BBR^N\setminus\xO)=0,$ as well as by $\mathfrak{M}^+_b(\xO)$ its positive cone.

Let 

\bal C_{N,s}:=2^{2s}\pi^{-\frac N2}s\frac{\Gamma(\frac{N+2s}2)}{\Gamma(1-s)}>0.
\eal
For $p=2$ and $K(x,y)=C_{N,s}|x-y|^{-N-2s},$  operator $L$ reduces to the well-known fractional Laplace operator $(-\xD)^s$ and the problem $P_+$ becomes  

\ba\label{abslinear}
\left\{
\BAL
(-\xD)^su + g(u)&=\xm\quad&&\text{in}\;\; \xO\\
u&=0\quad &&\text{in}\;\; \BBR^N\setminus\xO.
\EAL\right.
\ea
When $g$ satisfies the subcritical integral condition

\ba\label{condp=2}
\int_1^\infty (g(s)-g(-s))s^{-\frac{N}{N-2s}-1}{\dd} s<\infty,
\ea
Chen and V\'{e}ron \cite{CV1} showed that problem \eqref{abslinear} admits a unique very weak solution for any $\xm\in \mathfrak{M}_b(\xO).$  In addition they showed that problem \eqref{abslinear} with $g(u)=|u|^{\xk-1}u\; (\xk>1)$ possesses a very weak solution if and only if $\xm$ is absolutely continuous with respect  to Bessel capacity $C_{L_{2s,\xk'}},$ i.e. $\xm$ vanishes on compact set $E$ of $\xO$ satisfying $\mathrm{Cap}_{{2s,\xk'}}(E) = 0$
(see \eqref{besselcapacity} for the definition of the Bessel capacities). Their approach is based on the properties of the Green Kernel associated with fractional Laplace operator $(-\xD)^s$ in $\xO.$

In the local theory and more precisely when $Lu=-\xD_p u=-\text{div}(|\nabla u|^{p-2}\nabla u),$ related problems have been studied in \cite{B,BNV,BG,egw,PVsource,PV,Vbook}. In particular, in the power case, i.e.

\ba\label{powerabsorptionlocal}
\left\{\BAL
-\xD_p u+|u|^{\xk-1}u&=\xm\quad&&\text{in}\;\; \xO\\
u&=0\quad &&\text{on}\;\; \partial\xO,
\EAL\right.
\ea
Bidaut-V\'eron, Nguyen and V\'eron \cite{BNV} established that if $\xm\in \mathfrak{M}_b(\xO)$ is absolutely continuous with respect to the Bessel capacity $\mathrm{Cap}_{{p,\frac{\xk}{\xk-p+1}}}$, then there exists a renormalized solution to problem \eqref{powerabsorptionlocal} with $\xk>p-1.$ A main ingredient in the proof of this result is the pointwise estimates for  $p$-superharmonic functions in $\xO.$ These pointwise estimates are expressed in terms of the truncated Wolff potentials $W_{1,p}^{R}[\xm]$ (see, e.g., \cite{HKM,KM1,KM2,PV}). We recall here that the truncated Wolff potential is given by 

\ba\label{wolfftr}
W_{\xa,p}^R[\xm](x):=\int_0^{R}\left(\frac{|\xm|(B_r(x))}{r^{N-\xa p}}\right)^{\frac{1}{p-1}}\frac{{\dd} r}{r}, 
\ea
for any $R>0$ and $\xa\in (0,N)$ such that $p\in(1,\frac{N}{\xa}).$  Conversely, Bidaut-V\'eron \cite{B} showed that if problem \eqref{powerabsorptionlocal} with $\xk>p-1$ admits a renormalized solution, then $\xm$ is absolutely continuous with respect to the Bessel capacity $\mathrm{Cap}_{{p,\frac{\xk}{\xk-p+1}+\xe}}$, for any $\xe>0.$

Phuc and Verbitsky \cite{PV} showed that if $\tau\in\mathfrak{M}_b^+(\xO)$ has compact support in $\xO,$ then the problem
\ba\label{sourcpowerintro}
\left\{\BAL
-\xD_p u -|u|^\xk&=\xr\tau\quad&&\text{in}\;\; \xO\\
u&=0\quad&&\text{on}\;\; \partial\xO,
\EAL\right.
\ea
admits a nonnegative renormalized solution for some $\xr>0,$ if and only if, there exists a positive constant $C$ such that

\ba\label{sourcecond}
\tau(K)\leq C \mathrm{Cap}_{{p,\frac{\xk}{\xk-p+1}}}(K),
\ea
for any compact $K\subset\xO.$ Moreover, they showed that \eqref{sourcecond} is equivalent to

\bal
W_{1,p}^{2\diam(\xO)}[(W_{1,p}^{2\diam(\xO)}[\tau])^\xk]\leq CW_{1,p}^{2\diam(\xO)}[\tau] \quad \text{a.e. in}\;\;\xO,
\eal
for some positive constant $C>0.$

Recently, a great attention has been drawn to the study of  the fractional $p-$Laplacian or more general nonlocal operators (see for example \cite{abde,diCKP_local,diCKP_local2,IMS,KLL,KKL,KKP,KKP2,KMS2,KMS,LL,LL2,P} ). More precisely, Kuusi, Mingione and Sire \cite{KMS} dealt with the problem

\ba\label{linearweight}
\left\{
\BAL
L_\xF u&=\xm\quad&&\text{in}\;\; \xO\\
u&=g\quad &&\text{in}\;\; \BBR^N\setminus\xO,
\EAL\right.
\ea
where $g\in W^{s,p}(\BBR^N),$ $L_\xF$ is a nonlocal operator defined by

\bal
<L_\xF u,\xz>:=\int_{\BBR^N}\int_{\BBR^N}\xF(u(x)-u(y))(\xz(x)-\xz(y))K(x,y){\dd} y{\dd} x\quad\forall \xz\in C_0^\infty(\xO).
\eal
Here  $\xF:\BBR\to\BBR$ is a continuous function such that $\xF(0)=0$ and

\bal
\xL_\xF^{-1}|t|^{p}\leq \xF(t)t\leq \xL_\xF|t|^{p}.
\eal
When $2-\frac{s}{N}<p,$ they show the existence of a very weak solution to \eqref{linearweight}, which they called SOLA (Solutions obtained as limits of approximations). They also showed local pointwise estimates for SOLA to \eqref{linearweight} in terms of the truncated Wolff Potential $W_{s,p}^{R}[\xm].$ In the particular case $\xF(t)=|t|^{p-2}t$ and $g=0,$ the existence of very weak solutions was established in \cite{abde} for any $1<p<\frac{N}{s}.$

The objective of this work is to determine the subcritical integral conditions on $g,$ which ensure the existence of very weak solutions to problems \eqref{main}. In addition, in the power case, i.e. $g(u)=|u|^{\xk-1}u;$ $\xk>p-1,$ we aim to find sufficient conditions, expressed in terms of Bessel capacities like above, for the solvability of \eqref{main}.

Let us mention here that our work is inspired by the article \cite{BNV} for problem ($P_+$) and by the articles \cite{PVsource,PV} for problem ($P_-$) with $g(u)=|u|^{\xk-1}u;\;\xk>p-1$. However, due to the presence of the nonlocal operator, new essential difficulties arise which complicates drastically the study of  problems \eqref{main}.

In order to state our main results, we need to introduce the notion of the very weak solutions.

\begin{definition}\label{definsolu}
Let $s\in(0,1),$ $1<p<\frac{N}{s},$ $\tilde g\in C(\BBR),$ $\xO\subset\BBR^N$ be an open bounded domain and $\xm\in \mathfrak{M}(\xO).$  We will say that $u:\BBR^N\to\BBR$ is a very weak solution to the problem

\ba\label{absorption}
\left\{
\BAL
L u +\tilde g(u)&=\xm\quad&&\text{in}\;\; \xO\\
u&=0\quad &&\text{in}\;\; \BBR^N\setminus\xO,
\EAL\right.
\ea
if  $\tilde g(u)\in L^1_{loc}(\xO)$ and if the following conditions are valid:

(i) $u=0$ a.e. in $\BBR^N\setminus\xO$ and $u\in W^{h,q}(\BBR^N)$ for any $0<h<s$ and for any $0<q<\frac{N(p-1)}{N-s}.$

(ii) $T_k(u):=\max(-k,\min(u,k))\in W_0^{s,p}(\xO)$ for any $k>0.$ 

(iii) 

\bal
\int_{\BBR^N}\int_{\BBR^N}|u(x)-u(y)|^{p-2}(u(x)-u(y))(\xf(x)-\xf(y))K(x,y){\dd} x{\dd} y&+\int_\xO\tilde g(u)\xf {\dd} x=\int_\xO\xf {\dd}\xm
\eal
for any $\xf\in C_0^\infty(\xO).$
\end{definition}
\noindent We note here that if $2-\frac{s}{N}<p<\frac{N}{s},$ then the very weak solution $u$ belongs to the fractional Sobolev space $W^{h,q}(\BBR^N)$ for any $q \in (1,\frac{N(p-1)}{N-s}).$ If $p\leq 2-\frac{s}{N},$ the space $W^{h,q}(\BBR^N)$ in the above definition is no longer a fractional Sobolev space, however it is defined in the same way (see \eqref{sobolev}).   
 
In Section 2, we discuss the existence and main properties of the very weak solutions of problem \eqref{absorption} with $\tilde g\equiv0.$ Particularly, in the spirit of \cite{KMS}, we show the existence of a SOLA $u$ satisfying statements (i)-(iii) of the above definition (see Proposition \ref{sola}). The approximation sequence consists of solutions of \eqref{absorption} with $\tilde g\equiv0$ and smooth data. In addition, we prove that these solutions satisfy a priori estimates \eqref{est7} and \eqref{est1}. As a result, we establish that the very weak solution satisfies \eqref{est1} and
\ba\label{weakest}
\norm{|u|^{p-1}}^{*}_{L^{\frac{N}{N-sp}}_w(\BBR^N)}\leq C(N,p,s,\xL_K) \int_\xO|\xm|{\dd} x,
\ea
where $\norm{\cdot}^{*}_{L^{\frac{N}{N-sp}}_w(\BBR^N)}$ has been defined in \eqref{semi} and is related to the Marcinkiewicz spaces. Finally, when $\xm\in \mathfrak{M}_b^+(\xO),$ we construct this solution (see Propositions \ref{nwolffest} and \ref{wolffest}) such that $u\geq0$ and 

\bal
C^{-1}(N,p,s,\xL_K)W_{s,p}^{\frac{d(x)}{8}}[\xm](x)\leq u(x)\leq C(N,p,s,\xL_K) W_{s,p}^{2\diam(\xO)}[\xm](x)\quad \text{a.e. in}\;\;\xO,
\eal
where $d(x)=\dist(x,\partial\xO).$ The lower estimate in the above display can be obtained as a consequence of \cite[estimate (1.25)]{KMS}. The upper estimate in the above display is an application of \cite[Theorem 5.3]{KLL} and \eqref{weakest}.

Using the above properties of the very weak solutions and the fact that if $u,g$ satisfies \eqref{weakest} and \eqref{cond} respectively then $g(u)\in L^1(\xO),$ we obtain the following result.
\begin{theorem}\label{subcritical}
Let $s\in(0,1),$ $1<p<\frac{N}{s},$ $\xm\in \mathfrak{M}_b(\xO).$ We assume that $g\in C(\BBR)$ is a nondecreasing function satisfying $g(0)=0$ and

\ba\label{cond}
\int_1^\infty (g(s)-g(-s))s^{-\frac{N(p-1)}{N-sp}-1}{\dd} s<\infty.
\ea
Then there exist a very weak solution $u$ to problem ($P_+$) satisfying \eqref{weakest} and 

\ba\label{43}
- C(N,p,s,\xL_K)W_{s,p}^{2\diam(\xO)}[\xm^-]\leq u\leq  C(N,p,s,\xL_K) W_{s,p}^{2\diam(\xO)}[\xm^+]\qquad\text{a.e. in}\;\;\xO.
\ea

In addition, for any $q\in(0,\frac{N(p-1)}{N-s})$ and $h\in(0,s),$ there exists a positive constant $c=c(N,p,s,\xL_K,q,h,|\xO|)$ such that

\ba\label{est2nonlinearintro}\BAL
\left(\int_\xO |g(u)|{\dd} x\right)^\frac{1}{p-1}&+\left(\int_{\BBR^N}\int_{\BBR^N}\frac{|u(x)-u(y)|^q}{|x-y|^{N+hq}}{\dd} x{\dd} y\right)^\frac{1}{q}\leq c(|\xm|(\xO))^\frac{1}{p-1}.
\EAL
\ea

\end{theorem}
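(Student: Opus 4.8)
The plan is to approximate $g$ by a nondecreasing sequence of bounded continuous truncations and solve the corresponding problems using the linear (i.e., $\tilde g\equiv 0$) theory developed in Section~2, then pass to the limit. Concretely, set $g_n = T_n\circ g$ (truncation at level $n$), which is still nondecreasing, continuous, and vanishes at $0$. For each $n$, I would construct a very weak solution $u_n$ to $Lu_n + g_n(u_n) = \xm$ with $u_n = 0$ outside $\xO$, obtained as a SOLA in the spirit of Proposition~\ref{sola}: approximate $\xm$ by smooth data $\xm_k$ with $\int_{\xO}|\xm_k|\le \int_{\xO}|\xm|$ and $\xm_k\to\xm$ weakly, solve the (now bounded-nonlinearity) problems by a standard monotonicity/Schauder argument in $W_0^{s,p}(\xO)$, and extract a limit. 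Because $g_n$ is bounded, these auxiliary solutions are easy to produce; the point is that the a priori estimates \eqref{est7}, \eqref{est1}, \eqref{weakest} survive because one tests with suitable truncations and $g_n$ has the right sign (it is nondecreasing through $0$, so $g_n(u)u\ge 0$ and $g_n(u)\,\mathrm{sign}(u)\ge 0$), exactly as in the $\tilde g\equiv0$ case; moreover comparison with the solutions of the purely linear problem with data $\xm^+$ and $-\xm^-$ gives the two-sided Wolff potential bound \eqref{43} uniformly in $n$.

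Next I would establish the crucial uniform $L^1$ bound on $g_n(u_n)$. Testing the equation with a function that approximates $\mathrm{sign}(u_n)$ (or integrating the truncated equation), one gets $\int_\xO |g_n(u_n)|\,\dd x \le \int_\xO |\xm|$ up to the usual boundary terms controlled by the Wolff potential estimate; this is the standard "$g(u)$ is as integrable as the measure" principle. But to pass to the limit one needs more than an $L^1$ bound on $g_n(u_n)$: one needs equi-integrability, and this is where condition \eqref{cond} enters. Using the uniform Marcinkiewicz estimate \eqref{weakest}, $|u_n|^{p-1}$ lies in a fixed ball of $L^{\frac{N}{N-sp}}_w(\BBR^N)$, so $u_n$ lies in a fixed ball of $L^{\frac{N(p-1)}{N-sp}}_w$. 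Then the layer-cake formula together with \eqref{cond} — which says precisely that $\int_1^\infty (g(t)-g(-t)) t^{-\frac{N(p-1)}{N-sp}-1}\,\dd t<\infty$, the "subcritical" Serrin-type exponent for this operator — yields a uniform bound on $\int_\xO G(|u_n|)\,\dd x$ for a superlinear $G$, hence equi-integrability of $\{g_n(u_n)\}$ by the de~la~Vallée–Poussin criterion. This is the step I expect to be the main obstacle to write cleanly, because it requires the precise interplay between the weak-Lebesgue exponent coming from the nonlocal operator and the integral growth condition; the fractional/nonlocal setting changes the critical exponent from $\frac{N}{N-2s}$ (the $p=2$ case of Chen–Véron) to $\frac{N(p-1)}{N-sp}$, and one must make sure the computation is done with the correct Marcinkiewicz norm as defined in \eqref{semi}.

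With equi-integrability in hand, the limit passage is routine. From \eqref{est1} (or the a priori Sobolev bound recorded in Section~2) the sequence $u_n$ is bounded in $W^{h,q}(\BBR^N)$ for every $h<s$ and $q<\frac{N(p-1)}{N-s}$; by compact embedding a subsequence converges to some $u$ strongly in $L^1_{loc}$ and a.e., and $T_k(u_n)\rightharpoonup T_k(u)$ in $W_0^{s,p}(\xO)$. A standard argument (Boccardo–Gallouët / Kuusi–Mingione type, adapting \cite{KMS}) gives a.e. convergence of the "fractional gradients" $|u_n(x)-u_n(y)|^{p-2}(u_n(x)-u_n(y))$ and enough integrability to pass to the limit in the nonlinear nonlocal term against a fixed $\xf\in C_0^\infty(\xO)$; meanwhile $g_n(u_n)\to g(u)$ a.e., and Vitali's theorem upgrades this to $L^1(\xO)$ convergence using the equi-integrability. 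Hence $u$ is a very weak solution in the sense of Definition~\ref{definsolu}, it inherits \eqref{weakest} and \eqref{43} by lower semicontinuity and a.e. convergence, and \eqref{est2nonlinearintro} follows by combining the uniform $L^1$ bound on $g(u)$ with the uniform fractional-Sobolev estimate \eqref{est1}, both expressed in terms of $(|\xm|(\xO))^{1/(p-1)}$; the constant depends on $N,p,s,\xL_K,q,h$ and, through the Sobolev estimate, on $|\xO|$.
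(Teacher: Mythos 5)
Your proposal is correct and follows essentially the same route as the paper: truncate $g$ to $T_n\circ g$, construct SOLA for the truncated problems via mollified data (Lemma \ref{prop4}), exploit comparison with the solutions $v_1,v_2$ of the pure measure problems with data $\xm^+,\xm^-$ to obtain the uniform Wolff/Marcinkiewicz bounds, and use the subcritical condition \eqref{cond} together with the weak-$L^{\frac{N}{N-sp}}$ control (via Proposition \ref{subcrcon}) to obtain uniform $L^1$-control of the absorption term before passing to the limit. The only cosmetic difference is that the paper phrases the final convergence of $\{T_n\circ g(u_n)\}$ through a fixed dominating function $g(v_1)-g(-v_2)\in L^1(\xO)$ (hence dominated convergence), while you phrase it through equi-integrability and de~la~Vall\'ee--Poussin/Vitali; both hinge on exactly the same layer-cake computation against the Marcinkiewicz bound.
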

\noindent We note here that the integral conditions \eqref{cond} and \eqref{condp=2} coincide for $p=2$.

Let us consider problem $(P_+)$ with a power absorption, i.e.,
\ba\label{absorptionpower}
\left\{
\BAL
L u +|u|^{\xk-1}u&=\xm\quad&&\text{in}\;\; \xO\\
u&=0\quad &&\text{in}\;\; \BBR^N\setminus\xO.
\EAL\right.
\ea
We first notice that the function $g(t)=|t|^{\xk-1}t$ with $k>0$ satisfies \eqref{cond} if and only if $0<\xk<\frac{N(p-1)}{N-s},$ hence problem \eqref{absorptionpower} admits a very weak solution in this case. In the supercritical case $\xk\geq\frac{N(p-1)}{N-s},$ the sufficient condition for the solvability of problem \eqref{absorptionpower} is expressed in terms of the Bessel capacity $\mathrm{Cap}_{{sp,\frac{\xk}{\xk-p+1}}}$ as follows.

\begin{theorem}\label{powerabsorptionsupercritical}
Let $s\in(0,1),$ $1<p<\frac{N}{s},$ $\xk>p-1$ and $\xm\in \mathfrak{M}_b(\xO).$ In addition we assume that $\xm$ is absolutely continuous with respect to the Bessel capacity $\mathrm{Cap}_{{sp,\frac{\xk}{\xk-p+1}}}$. Then there exists a very weak solution $u$ to problem \eqref{absorptionpower} such that

\ba\label{41}
-C W_{s,p}^{2\diam(\xO)}[\xm^-]\leq u\leq C W_{s,p}^{2\diam(\xO)}[\xm^+]\quad\text{a.e. in}\;\;\xO.
\ea
In addition, for any $q\in(0,\frac{N(p-1)}{N-s})$ and $h\in(0,s),$ there exists a positive constant $c=c(N,p,s,\xL_K,q,h,|\xO|)$ such that

\ba\label{est2nonlinearpower}\BAL
\left(\int_\xO |u|^\xk{\dd} x\right)^\frac{1}{p-1}+\left(\int_{\BBR^N}\int_{\BBR^N}\frac{|u(x)-u(y)|^q}{|x-y|^{N+hq}}{\dd} x{\dd} y\right)^\frac{1}{q}\leq c(|\xm|(\xO))^\frac{1}{p-1}.
\EAL
\ea
\end{theorem}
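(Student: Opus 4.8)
\textbf{Proof strategy for Theorem~\ref{powerabsorptionsupercritical}.}
The plan is to build the solution by approximation, following the scheme announced in Section~2, and to control the approximate solutions uniformly by means of the Wolff potential pointwise estimates already at our disposal. First I would split $\xm=\xm^+-\xm^-$ and, using the hypothesis that $\xm$ is absolutely continuous with respect to $\mathrm{Cap}_{sp,\frac{\xk}{\xk-p+1}}$, invoke the standard decomposition of such measures (in the spirit of Feyel--de~La~Pradelle, as used in \cite{BNV}) to write $\xm^{\pm}$ as a sum of a nonnegative measure with bounded potential-type density and a remainder which is small in total variation; concretely one reduces to the case of measures $\xm_n\in\mathfrak M_b^+(\xO)$ (or signed smooth approximations $\xm_n$ with $|\xm_n|\le \varrho_n*|\xm|$) for which the truncated Wolff potentials $W_{s,p}^{2\diam(\xO)}[\xm_n^{\pm}]$ are controlled, and for which $\bigl(W_{s,p}^{2\diam(\xO)}[\xm_n^{+}]\bigr)^{\xk}\in L^1(\xO)$ with a bound depending only on the capacitary constant. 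The crucial point here is that the capacity condition is exactly what makes the $\xk$-th power of the Wolff potential integrable.

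Next I would solve the regularized problems
\[
\left\{\BAL
L u_n + |u_n|^{\xk-1}u_n &= \xm_n \quad &&\text{in }\xO,\\
u_n &= 0 \quad &&\text{in }\BBR^N\setminus\xO,
\EAL\right.
\]
with smooth data $\xm_n$; existence of such $u_n$ follows from monotone operator theory since $u\mapsto Lu+|u|^{\xk-1}u$ is monotone, coercive and pseudomonotone on $W_0^{s,p}(\xO)$. Then I would derive the two-sided comparison: testing against the solutions of the purely nonlocal problems with data $\xm_n^{+}$ and $-\xm_n^{-}$ and using the monotonicity of $L$ together with the sign of the absorption term, one gets
\[
-\,C\,W_{s,p}^{2\diam(\xO)}[\xm_n^-]\;\le\; u_n\;\le\; C\,W_{s,p}^{2\diam(\xO)}[\xm_n^+]\qquad\text{a.e. in }\xO,
\]
by the comparison principle and the upper Wolff estimate of Proposition~\ref{wolffest} (the lower one being not needed for this implication). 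In particular $|u_n|^{\xk}\le C\bigl(W_{s,p}^{2\diam(\xO)}[\xm_n^+]+W_{s,p}^{2\diam(\xO)}[\xm_n^-]\bigr)^{\xk}$, which by Step~1 is bounded in $L^1(\xO)$ uniformly in $n$; this gives equi-integrability of the absorption terms $|u_n|^{\xk-1}u_n$. Simultaneously, the a priori estimates \eqref{est7}, \eqref{est1} and \eqref{weakest} from Section~2, applied with right-hand side $\xm_n-|u_n|^{\xk-1}u_n$ whose total mass is bounded by $2|\xm|(\xO)$, yield a uniform bound on $u_n$ in $W^{h,q}(\BBR^N)$ for every $h<s$, $q<\frac{N(p-1)}{N-s}$, and on $T_k(u_n)$ in $W_0^{s,p}(\xO)$.

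Finally I would pass to the limit: extract a subsequence with $u_n\to u$ a.e. and in $W^{h,q}(\BBR^N)$ (compact embedding on the bounded set $\xO$), and in $L^1(\xO)$; by the uniform $L^1$ bound and Vitali's theorem $|u_n|^{\xk-1}u_n\to|u|^{\xk-1}u$ in $L^1(\xO)$. To pass to the limit in the nonlocal form one needs a.e. convergence of the fractional gradients $|u_n(x)-u_n(y)|^{p-2}(u_n(x)-u_n(y))$; this is obtained by the now-standard argument of testing the difference of equations with truncations $T_k(u_n-u_m)$ and using the monotonicity of $L$ to get strong convergence of $T_k(u_n)$ in $W_0^{s,p}(\xO)$, hence a.e. convergence of the differences, and then a Fatou/equi-integrability argument (as in \cite{KMS,abde}) to identify the limit of the nonlocal term. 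This produces a very weak solution $u$ in the sense of Definition~\ref{definsolu} satisfying \eqref{41}; estimate \eqref{est2nonlinearpower} then follows by passing to the limit in the uniform bounds, using \eqref{est2nonlinearintro}-type estimates with the integrability of $|u|^{\xk}$ controlled by the capacitary constant. I expect the main obstacle to be precisely this last step — establishing the a.e. convergence of the nonlocal ``gradient'' differences and the stability of the nonlocal form under the weak-type convergence available here — since the truncation argument must be carried out carefully in the nonlocal setting where $T_k(u_n-u_m)$ is not compactly supported and the full range $1<p<\tfrac Ns$ (including $p\le 2-\tfrac sN$) prevents one from working in a genuine Sobolev space; a secondary difficulty is making the capacitary decomposition of $\xm$ in Step~1 compatible with the smoothing needed for Step~2.
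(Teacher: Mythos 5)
Your general scheme---decomposing $\xm^\pm$ via the capacitary structure of \cite{BNV}, using the Wolff potential two-sided bounds and a sandwich by solutions of problems with data $\xm_n^\pm$, then passing to the limit---matches the paper's. But there is a genuine gap at the point where you pass to the limit in the absorption term.

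You claim that since $|u_n|^{\xk}\le C\bigl(W_{s,p}^{2\diam(\xO)}[\xm_n^+]+W_{s,p}^{2\diam(\xO)}[\xm_n^-]\bigr)^{\xk}$ and the right-hand side ``is bounded in $L^1(\xO)$ uniformly in $n$,'' equi-integrability of $|u_n|^{\xk-1}u_n$ follows. This fails on two counts. First, a uniform $L^1$ bound never gives equi-integrability (uniform integrability) by itself: $n\chi_{[0,1/n]}$ is bounded in $L^1$ but not equi-integrable. Second, the hypothesis on $\xm$ is the \emph{qualitative} absolute continuity with respect to $\mathrm{Cap}_{sp,\frac{\xk}{\xk-p+1}}$, not a quantitative bound $\xm(E)\le C\,\mathrm{Cap}(E)$; so the $L^{\xk}$-norms of $W_{s,p}^{2\diam(\xO)}[\xm_n^\pm]$ for the approximating sequence $\xm_n^\pm\nearrow\xm^\pm$ from \cite[Theorem~2.5]{BNV} need not be bounded uniformly in $n$ at all (they only need to be finite for each fixed $n$). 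So there is in general no integrable envelope of $(W_{s,p}[\xm_n^\pm])^{\xk}$.

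The paper sidesteps both problems by exploiting the absorption structure rather than the Wolff bound: Lemma~\ref{comp} applied to the power case gives the uniform bound $\int_\xO|u_{n,i}|^{\xk}\,{\dd}x\le\xm^\pm(\xO)$ (here the $|u|^{\xk}$ term is absorbed, not estimated by a potential), and, crucially, the approximating absorption solutions $u_{n,i}$ are built \emph{monotone increasing in $n$} (as stated in \eqref{42b}), because the measures $\xm_n^\pm$ increase and the solution map is order-preserving (Theorem~\ref{var}). Monotone convergence then produces a genuine $L^{\xk}$-function $u_i=\lim_n u_{n,i}$ with $\int_\xO|u_i|^{\xk}\le\xm^\pm(\xO)$, which dominates $|u_n|^{\xk}$ through the sandwich $-u_{n,2}\le u_n\le u_{n,1}$; dominated convergence then gives $|u_n|^{\xk-1}u_n\to|u|^{\xk-1}u$ in $L^1(\xO)$. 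Without the monotone construction, your Vitali argument does not close. As for the ``main obstacle'' you flag---the identification of the limit of the nonlocal form after truncation---the paper does not re-derive it: it is delegated to the already-established convergence machinery in Proposition~\ref{sola} (which in turn rests on \cite{MMOP,Vbook}), carried through Lemmas~\ref{prop4}--\ref{prop5}; the paper also avoids the tension you mention between capacitary decomposition and mollification, since Lemma~\ref{prop5} works directly with measures $\xl_i$ for which $g(\pm CW_{s,p}^{2R}[\xl_i])\in L^1$, with mollification used only inside Lemma~\ref{prop4} where the nonlinearity is already truncated to $T_n\circ g$ and hence bounded.
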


In view of the discussion on the existence of solutions to problem \eqref{sourcpowerintro}, we expect that the existence phenomenon occurs for ($P_-$) only for measures $\xm\in \mathfrak{M}_b(\xO)$ with small enough total mass. Indeed, using the Schauder fixed point theorem and sharp weak Lebesgue estimates, we prove the following existence result for any $\xm\in \mathfrak{M}_b(\xO)$ with small enough total mass.

\begin{theorem}\label{sourcegeneral} Let $s\in(0,1),$ $1<p<\frac{N}{s}$ and $\tau\in\mathfrak{M}_b(\xO)$ be such that  $|\tau|(\xO)\leq 1.$ Assume that $ g \in C(\R)$ is a nondecreasing function satisfying \eqref{cond} and
\ba\label{condpower}
 |g(s)|\leq a|s|^d \quad \text{for some } a>0,\;d>1\;\; \text{ and for any } |s|\leq 1.
 \ea

Then there exists a positive constant $\xr_0$ depending on $N,|\Gw|,\xL_g,\Gl_K,a,s,p,d,|\xO|$ such that for every $\xr \in (0,\xr_0)$ the following problem
	\ba\label{trans0} \left\{
\BAL
Lv&=g(v)+\xr\tau\quad&&\text{in}\;\; \xO\\
v&=0 \quad &&\text{in}\;\; \BBR^N\setminus\xO,
\EAL
\right.
\ea
admits a very weak solution $v$ satisfying
	\ba\label{estM2}
\||v|^{p-1}\|_{L_w^{\frac{N}{N-sp}}(\BBR^N)}^* \leq t_0.
	\ea
Here, $t_0>0$ depends on $N,|\Gw|,\xL_g,\Gl_K,a,s,p,d,\xr_0.$ In addition, for any $q\in(0,\frac{N(p-1)}{N-s})$ and $h\in(0,s),$ there exists a positive constant $c$ depending only on $N,p,s,\xL_g,\xL_K,q,h,|\xO|,a,d,\xr_0$ and $t_0,$ such that

\ba\label{est2nonlinearsource}\BAL
\left(\int_{\BBR^N}\int_{\BBR^N}\frac{|v(x)-v(y)|^q}{|x-y|^{N+hq}}{\dd} x{\dd} y\right)^\frac{1}{q}\leq c(1+\xr|\tau|(\xO))^\frac{1}{p-1}.
\EAL
\ea
\end{theorem}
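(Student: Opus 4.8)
The plan is to recast the problem as a fixed-point problem for the solution operator of the linear(ized) equation and to apply the Schauder fixed point theorem in a suitable closed convex subset of $L^1(\xO)$ (or of the Marcinkiewicz-type space controlled by \eqref{weakest}). First, for a given datum $f\in \M_b(\xO)$, denote by $S[f]$ the very weak solution of $Lu=f$ in $\xO$, $u=0$ in $\BBR^N\sms\xO$, constructed in Section~2 (Proposition~\ref{sola}); by \eqref{weakest} and \eqref{est1} it satisfies $\||S[f]|^{p-1}\|^*_{L^{\frac{N}{N-sp}}_w(\BBR^N)}\le C|f|(\xO)$, together with the Wolff potential bounds of Propositions~\ref{nwolffest}--\ref{wolffest} when $f\ge 0$. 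Given $\xr>0$ and $v\in L^1(\xO)$ with $v=0$ outside $\xO$, set $\Phi(v):=S[g(v)+\xr\tau]$. A fixed point of $\Phi$ is exactly a very weak solution of \eqref{trans0}, provided $g(v)\in L^1(\xO)$, which will follow from the a priori bound on $v$ via the subcritical condition \eqref{cond} (as in the proof of Theorem~\ref{subcritical}).

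Second, I would define the invariant set. Let $t_0>0$ be a parameter to be fixed, and let
\[
\CB_{t_0}:=\Big\{v\in L^1(\xO):\ v=0\ \text{a.e. in}\ \BBR^N\sms\xO,\ \||v|^{p-1}\|^*_{L^{\frac{N}{N-sp}}_w(\BBR^N)}\le t_0\Big\}.
\]
For $v\in\CB_{t_0}$, estimate $\|g(v)\|_{L^1(\xO)}$: split $\{|v|\le 1\}$, where \eqref{condpower} gives $|g(v)|\le a|v|^d$, and $\{|v|>1\}$, where \eqref{cond} together with the layer-cake formula and the weak-$L^{N/(N-sp)}$ bound on $|v|^{p-1}$ controls $\int_{\{|v|>1\}}|g(v)|\,\dd x$ by a quantity depending increasingly on $t_0$ (this is precisely the computation behind Theorem~\ref{subcritical}). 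On $\{|v|\le1\}$, $\int|v|^d\dd x\le \int |v|^{\frac{N(p-1)}{N-sp}}\wedge$ (something) — more simply, bound $\||v|^{p-1}\|_{L^1}$ and $\||v|^{p-1}\|_{L^{q}_w}$ to get $\|v\|_{L^r}$ for $r$ slightly below the Marcinkiewicz exponent and then use $d>1$ with the smallness of $t_0$. In all cases one arrives at
\[
|g(v)|(\xO)+\xr|\tau|(\xO)\ \le\ \Theta(t_0)+\xr,
\]
with $\Theta(t_0)\to 0$ as $t_0\to 0^+$ (because every term carries a positive power of $t_0$). Then \eqref{weakest} yields $\||\Phi(v)|^{p-1}\|^*_{L^{\frac{N}{N-sp}}_w}\le C(\Theta(t_0)+\xr)$. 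Now choose $t_0$ small enough that $C\,\Theta(t_0)\le t_0/2$, and then $\xr_0$ small enough that $C\xr_0\le t_0/2$; for $\xr<\xr_0$ this gives $\Phi(\CB_{t_0})\subseteq\CB_{t_0}$.

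Third, I would verify compactness and continuity of $\Phi$ on $\CB_{t_0}$ endowed with the $L^1(\xO)$-topology. Compactness: by the stability part of Proposition~\ref{sola} and estimate \eqref{est1}, the image $\Phi(\CB_{t_0})$ is bounded in $W^{h,q}(\BBR^N)$ for some $h\in(0,s)$, $q\in(1,\frac{N(p-1)}{N-s})$, hence relatively compact in $L^1(\xO)$ by the compact fractional Sobolev embedding on the bounded set $\xO$; uniform integrability of $\{|\Phi(v)|^{p-1}\}$ from the weak-Marcinkiewicz bound rules out loss of mass. Continuity: if $v_n\to v$ in $L^1(\xO)$ with $v_n,v\in\CB_{t_0}$, then (up to a subsequence) $v_n\to v$ a.e., so $g(v_n)\to g(v)$ a.e.; the equi-integrability of $\{g(v_n)\}$ obtained above via \eqref{cond}–\eqref{condpower} gives $g(v_n)\to g(v)$ in $L^1(\xO)$ by Vitali, and then $\Phi(v_n)=S[g(v_n)+\xr\tau]\to S[g(v)+\xr\tau]=\Phi(v)$ in $L^1$ by the continuous dependence of SOLA on the $L^1$-datum (again Proposition~\ref{sola}). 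Schauder's theorem then produces a fixed point $v\in\CB_{t_0}$, which is the desired very weak solution; \eqref{estM2} is built in, and \eqref{est2nonlinearsource} follows from \eqref{est1} applied with datum $g(v)+\xr\tau$, whose total mass is $\le \Theta(t_0)+\xr|\tau|(\xO)\le c(1+\xr|\tau|(\xO))$.

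The main obstacle is the equi-integrability / a-priori bound step: one must transfer the \emph{subcritical} growth hypothesis \eqref{cond} at infinity and the polynomial bound \eqref{condpower} near zero into a uniform $L^1$-control of $g(v)$ over the whole ball $\CB_{t_0}$, and crucially arrange that this control is \emph{small} when $t_0$ is small, so that the self-map property closes. This is delicate because the Marcinkiewicz bound on $|v|^{p-1}$ is only weak-type, so the layer-cake estimate must be carried out carefully (as in Theorem~\ref{subcritical}), and the near-zero term $a|v|^d$ with $d>1$ has to be estimated using the full strength of the weak-$L^{N/(N-sp)}$ norm rather than merely $L^1$. Once this quantitative smallness is in hand, the compactness and continuity arguments are routine consequences of the SOLA theory from Section~2.
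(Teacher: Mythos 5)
Your overall strategy — Schauder fixed point in a small ball controlled by the $L_w^{N/(N-sp)}$ quasinorm of $|v|^{p-1}$, using the subcritical condition \eqref{cond} to control the tail $\{|v|>1\}$ and the polynomial bound \eqref{condpower} near zero so that the self-map closes by smallness of $t_0$ and $\xr_0$ — is exactly the engine driving the paper's argument. However, you run the fixed point argument directly on the map $\Phi(v)=S[g(v)+\xr\tau]$, where $S$ is the SOLA constructed in Proposition~\ref{sola}, and this is where the proposal breaks. Proposition~\ref{sola} is an existence result obtained by extracting a subsequence; it does not define a single-valued solution operator for general bounded measure data (uniqueness of SOLA for the fractional $p$-Laplacian is not established in this paper, nor is it known in general), and it certainly does not give the "continuous dependence of SOLA on the $L^1$-datum" that your continuity step invokes. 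Without a well-defined, continuous $\Phi$, Schauder's theorem does not apply, and this cannot be patched by citing Proposition~\ref{sola}.

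The paper avoids the problem by a two-step structure you should compare with. First, a preliminary lemma (Section~\ref{secsourcesubcr}) applies Schauder only in the regularized setting: $\tau\in C_0^\infty$ and $g$ bounded, so that the right-hand side lies in $L^{p'}(\xO)$ and the solution map $\BBT$ takes values in $W_0^{s,p}(\xO)$ and is the \emph{unique} weak solution. Continuity of $\BBS(v)=|\BBT(v)|^{p-1}\sign(\BBT(v))$ in $L^1(\xO)$ is then proved honestly, by testing the difference of the equations with $\xf=\BBT(v_n)-\BBT(v)$ and using the elementary monotonicity inequalities for $|a|^{p-2}a$, split into the cases $1<p<2$ and $p\geq 2$; this argument needs $\BBT(v_n),\BBT(v)\in W_0^{s,p}$, which fails for SOLA with measure data. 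Second, Theorem~\ref{sourcegeneral} is deduced by approximating $\tau$ by mollifications $\tau_n$ and $g$ by truncations $g_n$, applying the lemma to each approximate problem, obtaining the uniform bounds \eqref{estM1} and the $W^{h,q}$ bound from Proposition~\ref{lpest}, and passing to the limit; Vitali's theorem together with Proposition~\ref{subcrcon} gives $g_n(u_n)\to g(u)$ in $L^1$. Your self-map and equi-integrability computations are essentially the paper's and are fine; what is missing is the regularization layer that makes the fixed-point map well defined and genuinely continuous. (A small additional point: the change of variables $v\mapsto|v|^{1/(p-1)}\sign(v)$ in the lemma requires $d>p-1$ to choose $\kappa<\frac{d}{p-1}$; your near-zero bound has the same constraint, which should be stated since $d>1$ alone does not suffice when $p>2$.)
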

\noindent In the linear case, i.e. $p=2$, problem ($P_-$) with $L=(-\xD)^s$ was thoroughly studied in \cite{CFV}. More precisely, the authors in \cite{CFV} showed that the same existence result occurs provided $g$ satisfies \eqref{condp=2} and \eqref{condpower}.

Problem ($P_-$) with $g(t)=|t|^{\xk-1}t$ and $\xm\in\mathfrak{M}_b^+(\xO)$ becomes 

\ba\label{powersource}\left\{
\BAL
L v&=|v|^{\xk-1}v+\xr\tau\quad&&\text{in}\;\; \xO\\
v&=0\quad\quad &&\text{in}\;\; \BBR^N\setminus\xO.
\EAL
\right.
\ea
When $p=2$, problem ($P_-$) with $L=(-\xD)^s$ and $\tau=\xd_0$ was studied in \cite{CQ}. Here $\xd_0$ denotes the dirac measure concentrated at a point $x_0\in\xO.$ In particular, the authors in \cite{CQ} established that if $\xk\geq\frac{N(p-1)}{N-sp}$ and $u$ is a nonnegative solution of  \eqref{powersource} then $\xr=0.$
Concerning problem \eqref{powersource}, conditions \eqref{cond} and \eqref{condpower} are satisfied if $\xk$ belongs to the subcritical range, that is when $p-1<\xk<\frac{N(p-1)}{N-p}.$ In general, a sufficient condition for the solvability of \eqref{powersource} is the following.

\begin{proposition}\label{exissource} Let $s\in(0,1),$ $1<p<\frac{N}{s},$ $\xk>p-1$ and $\tau\in\mathfrak{M}_b^+(\xO) $ be such that

\ba\label{subcriticalcondition}
W_{s,p}^{2\diam(\xO)}[(W_{s,p}^{2\diam(\xO)}[\tau])^\xk]\leq M W_{s,p}^{2\diam(\xO)}[\tau]\;\;a.e.\;\;\text{in}\;\;\xO,
\ea
for some positive constant $M.$ Then problem \eqref{powersource} admits a nonnegative very weak solution $u$ for some $\xr>0.$ Furthermore, there holds

\ba\label{estsourcewolff}
M^{-1} W_{s,p}^{\frac{d(x)}{8}}[\xm](x)\leq u(x)\leq M W_{s,p}^{2\diam(\xO)}[\xr\tau](x)\qquad\text{for a.e.}\; x\in\xO,
\ea
where ${\dd}\xm=u^\xk {\dd} x+\xr \dd\tau$ and the positive constant $M$ depends only on $C,N,p,q,\xL_K.$ 
\end{proposition}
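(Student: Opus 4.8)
\textbf{Proof proposal for Proposition \ref{exissource}.}
The plan is to construct the solution iteratively, exploiting the upper Wolff-potential estimate for the operator $L$ recorded in Section 2 and the comparison/monotonicity properties of the approximation scheme. First I would set up the iteration: let $u_0:=W_{s,p}^{2\diam(\xO)}[\xr\tau]$ for a parameter $\xr>0$ to be fixed, and define inductively $u_{n+1}$ to be the very weak solution (constructed as a SOLA via Proposition \ref{sola} and Propositions \ref{nwolffest}--\ref{wolffest}) of the linear-data problem $Lu_{n+1}=u_n^{\xk}+\xr\tau$ in $\xO$, $u_{n+1}=0$ in $\BBR^N\setminus\xO$, using that the datum $u_n^{\xk}+\xr\tau\in\mathfrak{M}_b^+(\xO)$ once we know $u_n^{\xk}\in L^1(\xO)$. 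By the upper estimate from the nonlinear Wolff machinery,
\bal
u_{n+1}(x)\leq C_1 W_{s,p}^{2\diam(\xO)}[u_n^\xk+\xr\tau](x)\leq C_1 W_{s,p}^{2\diam(\xO)}[u_n^\xk](x)+C_1 W_{s,p}^{2\diam(\xO)}[\xr\tau](x)\quad\text{a.e. in }\xO,
\eal
where I have used the quasi-additivity of truncated Wolff potentials (subadditivity of $t\mapsto t^{1/(p-1)}$ handles the case $p\geq 2$, and a standard $C(p)$ constant the case $p<2$).

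The heart of the argument is to show that, for $\xr$ small enough, the sequence stays trapped below a fixed multiple of $W_{s,p}^{2\diam(\xO)}[\xr\tau]$. I would prove by induction that $u_n\leq \gl\, W_{s,p}^{2\diam(\xO)}[\xr\tau]$ a.e. in $\xO$ for a suitable constant $\gl=\gl(C_1,M)>1$. Assuming this for $n$, homogeneity of the Wolff potential in its measure argument gives $W_{s,p}^{2\diam(\xO)}[\xr\tau]=\xr^{1/(p-1)}W_{s,p}^{2\diam(\xO)}[\tau]$, so $u_n^\xk\leq \gl^\xk \xr^{\xk/(p-1)}(W_{s,p}^{2\diam(\xO)}[\tau])^\xk$, and then hypothesis \eqref{subcriticalcondition} yields
\bal
W_{s,p}^{2\diam(\xO)}[u_n^\xk]\leq \gl^{\xk/(p-1)}\xr^{\xk/((p-1)^2)}\, W_{s,p}^{2\diam(\xO)}[(W_{s,p}^{2\diam(\xO)}[\tau])^\xk]\leq M\gl^{\xk/(p-1)}\xr^{\xk/((p-1)^2)}\,W_{s,p}^{2\diam(\xO)}[\tau].
\eal
Feeding this back into the displayed bound for $u_{n+1}$, and using once more $W_{s,p}^{2\diam(\xO)}[\xr\tau]=\xr^{1/(p-1)}W_{s,p}^{2\diam(\xO)}[\tau]$, one gets $u_{n+1}\leq \big(C_1 M\gl^{\xk/(p-1)}\xr^{(\xk-p+1)/((p-1)^2)}+C_1\big)W_{s,p}^{2\diam(\xO)}[\xr\tau]$. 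Since $\xk>p-1$, the exponent $(\xk-p+1)/((p-1)^2)$ is positive, so choosing first $\gl:=2C_1$ and then $\xr_0$ small enough that $C_1 M\gl^{\xk/(p-1)}\xr_0^{(\xk-p+1)/((p-1)^2)}\leq C_1$ closes the induction for all $\xr\in(0,\xr_0)$. This uniform bound also shows $u_n^\xk\in L^1(\xO)$ at every step (the right-hand side is integrable because $W_{s,p}^{2\diam(\xO)}[\tau]\in L^{\xk}$, which is part of what \eqref{subcriticalcondition} forces), so the iteration is well defined.

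Next I would pass to the limit. Monotonicity of the data map and comparison for $L$ (each $u_{n+1}$ is the SOLA associated with the ordered data $u_n^\xk+\xr\tau$) give that $(u_n)$ is monotone — starting the iteration from $u_0\equiv 0$ makes it nondecreasing — and it is bounded above by $\gl\,W_{s,p}^{2\diam(\xO)}[\xr\tau]\in L^1(\xO)$, hence $u_n\uparrow u$ a.e. and in $L^1(\xO)$ by monotone convergence; moreover $u_n^\xk\uparrow u^\xk$ in $L^1(\xO)$ by the same token together with the uniform bound. The a priori estimates \eqref{est1}, \eqref{weakest} and the fractional Sobolev bound \eqref{est2nonlinearintro}-type inequality, applied with datum $u_n^\xk+\xr\tau$ of uniformly bounded mass, give that $(u_n)$ is bounded in $W^{h,q}(\BBR^N)$ for every admissible $h,q$ and that $(T_k(u_n))$ is bounded in $W_0^{s,p}(\xO)$; standard compactness and the stability of very weak solutions under such convergence (as in the SOLA construction of Section 2) let me pass to the limit in the weak formulation, with $\int_\xO u_n^\xk\xf\,dx\to\int_\xO u^\xk\xf\,dx$ by dominated convergence. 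Thus $u$ is a nonnegative very weak solution of $Lu=u^\xk+\xr\tau$, i.e. of \eqref{powersource}. Finally, with $\dd\xm:=u^\xk\,\dd x+\xr\,\dd\tau$ the upper bound in \eqref{estsourcewolff} is just the passage to the limit of $u_n\leq C_1 W_{s,p}^{2\diam(\xO)}[\xm]$ combined with the closed induction bound $u\leq M W_{s,p}^{2\diam(\xO)}[\xr\tau]$, and the lower bound $u\geq C^{-1}W_{s,p}^{d(x)/8}[\xm]$ is the lower Wolff estimate from Proposition \ref{nwolffest}/Proposition \ref{wolffest} applied to $u$ with its own datum $\xm$; renaming constants as $M=M(C,N,p,q,\xL_K)$ finishes the proof.

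\textbf{Main obstacle.} The delicate point is not the fixed-point scheme itself but justifying the limit passage rigorously in the very weak / SOLA framework: one must know that the approximating solutions $u_n$ (each itself only a SOLA, obtained as a limit of smooth-data problems) are ordered and enjoy the quantitative Wolff bounds simultaneously, i.e. that the particular SOLA produced by the construction of Section 2 is the one satisfying both comparison and the pointwise estimates. This requires invoking the comparison principle for $L$ at the level of the smooth approximations and the fact — built into Propositions \ref{nwolffest}--\ref{wolffest} — that the constructed solution is sandwiched by Wolff potentials; handling the case $1<p<2$ (where Wolff potentials are only quasi-additive and the energy space inclusions are weaker) needs slightly more care but introduces no new idea.
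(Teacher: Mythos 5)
Your proposal follows essentially the same route as the paper's proof: construct a monotone iteration $Lu_{n+1}=u_n^\xk+\xr\tau$ via the SOLA machinery of Section 2, use the Wolff-potential upper bound together with homogeneity and hypothesis \eqref{subcriticalcondition} to close a barrier estimate $u_n\leq\gl\,W_{s,p}^{2\diam(\xO)}[\xr\tau]$ for $\xr$ small (the paper writes this as the a priori fixed function $w=AC\,W_{s,p}^{2\diam(\xO)}[\xr\tau]$ with $A=2^{1/(p-1)+1}$, whereas you close an explicit induction with $\gl=2C_1$ — same mechanism), and pass to the limit by monotone convergence and the stability of very weak solutions, deriving \eqref{estsourcewolff} from Proposition \ref{nwolffest} and Fatou. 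The one internal wobble — initially declaring $u_0:=W_{s,p}^{2\diam(\xO)}[\xr\tau]$, which would not seed a monotone scheme since it is not a subsolution, and later correcting to $u_0\equiv 0$ — should simply be resolved in favor of the latter (or, as the paper does, take $u_0$ to be the nonnegative very weak solution of $Lu_0=\xr\tau$), but this does not affect the substance of the argument.
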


Finally, inspired from Phuc and Verbitsky's ideas in \cite{PVsource} and \cite{PV}, we establish the following existence result in the whole range $\xk>p-1.$

\begin{theorem}\label{supercrpower1}
Let $s\in(0,1),$ $1<p<\frac{N}{s},$ $\xk>p-1$ and  $\tau\in \mathfrak{M}_b^+(\xO)$ with compact support in $\xO.$ Then the following statements are equivalent.

(i) Problem \eqref{powersource} admits a nonnegative very weak solution $u_\xr$ for some $\xr>0$ such that

\ba\label{estsourcewolff3}
C_1^{-1}W_{s,p}^{\frac{d(x)}{8}}[\xm](x)\leq u_\xr(x)\leq C_1 W_{s,p}^{2\diam(\xO)}[\xr\tau](x)\qquad\text{for a.e.}\; x\in\xO,
\ea
where ${\dd}\xm=u^\xk {\dd} x+\xr \dd\tau$ and for some constant $C_1>0.$

(ii) There exists a positive constant $C_2$ such that

\ba\label{caocon2}
\tau(E)\leq C_2 \mathrm{Cap}_{{sp,\frac{\xk}{\xk-p+1}}}(E)
\ea

for any Borel set $E\subset\BBR^N.$

(iii) There exists a positive constant $C_3$ such that
\ba\label{57}
\int_{B}(W_{s,p}^{2\diam(\xO)}[\tau_{\lfloor B}])^\frac{\xk}{p-1}{\dd} x \leq C_3\tau(B)
\ea
for any ball $B\subset\BBR^N,$ where ${\dd}\tau_{\lfloor B}=\chi_B{\dd} \tau.$

(iv) There exists a positive constant $C_4$ such that
\bal
W_{s,p}^{2\diam(\xO)}[(W_{s,p}^{2\diam(\xO)}[\tau])^\xk]\leq C_4W_{s,p}^{2\diam(\xO)}[\tau]\quad\text{a.e.}\;\;\text{in}\;\;\xO.
\eal
\end{theorem}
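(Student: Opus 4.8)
The plan is to deduce the four‑way equivalence from the existence result already available plus two further blocks of argument. First, observe that $(iv)\Rightarrow(i)$ needs no new work: condition \eqref{subcriticalcondition} of Proposition \ref{exissource} is precisely statement $(iv)$, and the conclusion of that proposition — a nonnegative very weak solution $u_\xr$ of \eqref{powersource} for some $\xr>0$ satisfying the two‑sided estimate \eqref{estsourcewolff} — is precisely statement $(i)$ (with $C_1=M$). The equivalences $(ii)\Leftrightarrow(iii)\Leftrightarrow(iv)$ do not involve the operator $L$ at all and belong to nonlinear potential theory, so I would invoke them from the literature. Hence the only genuinely new implication is $(i)\Rightarrow(iv)$, which I would prove directly from the Wolff bounds, without re‑using the equation.

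For $(i)\Rightarrow(iv)$, set $R:=2\diam(\xO)$ and recall $\dd\xm=u_\xr^\xk\,\dd x+\xr\,\dd\tau$. The truncated Wolff potential $\mu\mapsto W_{s,p}^{r}[\mu]$ is nondecreasing in the measure, positively homogeneous of degree $\tfrac1{p-1}$, and (quasi‑)sub‑ and super‑additive. Applying monotonicity to $\xm\ge\xr\tau$ and $\xm\ge u_\xr^\xk\,\dd x$, the lower bound in \eqref{estsourcewolff3} yields $u_\xr\gtrsim W_{s,p}^{d(x)/8}[\xr\tau]$ and $u_\xr\gtrsim W_{s,p}^{d(x)/8}[u_\xr^\xk\,\dd x]$; raising the first to the power $\xk$, inserting it into the second (again by monotonicity and homogeneity), and then using $u_\xr\le C_1W_{s,p}^{R}[\xr\tau]$, one gets
\[
W_{s,p}^{d(x)/8}\!\big[(W_{s,p}^{d(x)/8}[\xr\tau])^{\xk}\big]\ \le\ C\,W_{s,p}^{R}[\xr\tau]\qquad\text{a.e. in }\xO .
\]
It remains to raise the truncation level from $d(x)/8$ to $R$ on the left. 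Here the hypothesis that $\tau$ has compact support in $\xO$ is essential: the measure $\dd\nu:=(W_{s,p}^{d(x)/8}[\xr\tau])^{\xk}\,\dd x$ is concentrated on a fixed compact subset of $\xO$ (on $\{W_{s,p}^{d(x)/8}[\xr\tau]>0\}$ one has $d(x)\ge c_0>0$), and it is a \emph{finite} measure, since $u_\xr^{\xk}\in L^{1}(\xO)$ — which follows from the upper bound in \eqref{estsourcewolff3}, $W_{s,p}^{R}[\xr\tau]$ being bounded away from $\supp\tau$ — together with $u_\xr\gtrsim W_{s,p}^{d(x)/8}[\xr\tau]$. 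For a finite measure supported away from $\partial\xO$ the ``tail'' $\int_{d(x)/8}^{R}(\cdot)^{1/(p-1)}\,\dd r/r$ of its Wolff potential is bounded on $\xO$; the same holds for the potential of $\xr\tau$; and, again because $\tau$ is compactly supported in $\xO$, $W_{s,p}^{R}[\xr\tau]\ge c>0$ on $\xO$, so additive constants are absorbable. Combining these elementary observations with the (quasi‑)additivity of $W_{s,p}^{R}$ upgrades the displayed inequality to $W_{s,p}^{R}[(W_{s,p}^{R}[\xr\tau])^{\xk}]\le C\,W_{s,p}^{R}[\xr\tau]$ a.e., and homogeneity lets the factor $\xr$ be absorbed into the constant. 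This is $(iv)$.

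The equivalences $(ii)\Leftrightarrow(iii)\Leftrightarrow(iv)$ I would quote from nonlinear potential theory. For $(iii)\Leftrightarrow(iv)$ one decomposes the Wolff potentials dyadically (a Whitney‑type localization): summing the testing inequality $(iii)$ over a dyadic covering recovers the pointwise bound $(iv)$, and conversely integrating $(iv)$ over a ball and estimating below recovers $(iii)$; cf. \cite{PV}. For $(ii)\Leftrightarrow(iii)$ one uses the dual description of the Bessel capacity $\mathrm{Cap}_{sp,\frac{\xk}{\xk-p+1}}$, Wolff's inequality relating this capacity to the corresponding Wolff‑potential energies, and a Kerman--Sawyer‑type reduction of the trace inequality $\tau(E)\le C\,\mathrm{Cap}_{sp,\frac{\xk}{\xk-p+1}}(E)$ to the testing condition over balls; this circle of ideas goes back to Maz'ya, Adams, Hedberg and Wolff, and in the present form is due to Phuc and Verbitsky \cite{PV,PVsource}. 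The compact support of $\tau$ again makes the truncated potentials $W_{s,p}^{R}$ interchangeable with the full ones throughout.

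The step I expect to be the main obstacle is $(i)\Rightarrow(iv)$, and within it the exchange of the truncation level $d(x)/8$ (which appears in the Wolff lower bound \eqref{estsourcewolff3}) for the fixed level $R=2\diam(\xO)$ (which appears in $(iv)$). Monotonicity of the Wolff potential alone is useless here, since the two truncations sit on both the inner and the outer potential on the left, and enlarging them increases exactly the quantity one wants to bound from above; one must instead control the tails, which is where compact support of $\tau$ and the finiteness of the intermediate measure $\nu$ — the latter a consequence of the a priori bound $u_\xr^{\xk}\in L^{1}(\xO)$ built into $(i)$ — are indispensable. Indeed, without this there is no hope of $(iv)$: if $\xk\ge\frac{N(p-1)}{N-sp}$ and $\tau$ is a Dirac mass, then $(W_{s,p}^{R}[\tau])^{\xk}\notin L^{1}_{\loc}$, so $(iv)$, and hence solvability, genuinely fails. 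The potential‑theoretic equivalences $(ii)\Leftrightarrow(iii)\Leftrightarrow(iv)$, though technically demanding, introduce no new difficulty here.
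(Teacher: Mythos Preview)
Your strategy is correct but follows a genuinely different route from the paper. Both you and the paper take $(iv)\Rightarrow(i)$ from Proposition~\ref{exissource}. For the rest, the paper proves the cycle $(i)\Rightarrow(ii)\Rightarrow(iii)\Rightarrow(iv)$ directly: the key step $(i)\Rightarrow(ii)$ uses the lower Wolff bound in \eqref{estsourcewolff3} together with the weighted maximal operator $M_\xm g$ to derive a trace inequality $\int(\BBG_{sp}[\tilde g\,\xm_{\lfloor K}])^{\xk/(p-1)}\dd x\le c\int|\tilde g|^{\xk/(p-1)}\dd\xm_{\lfloor K}$, and then invokes the dual characterization of Bessel capacities (Adams--Hedberg) to reach \eqref{caocon2}; $(ii)\Rightarrow(iii)$ reverses that duality, and $(iii)\Rightarrow(iv)$ is proved by splitting the inner Wolff integral at the outer scale $t$ and using an intermediate decay bound $\tau(B_t(x))\le c_0\,t^{(\xk(N-sp)-N(p-1))/(\xk-p+1)}$ extracted from the testing condition. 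You instead close the loop with a direct $(i)\Rightarrow(iv)$ that never leaves pointwise Wolff estimates---no maximal function, no capacity--trace duality---at the price of the truncation upgrade from $d(\cdot)/8$ to $R$, which you handle correctly (compact support of $\tau$ bounds all tails, and the uniform lower bound $W_{s,p}^{R}[\tau]\ge c>0$ on $\xO$ absorbs the resulting constants). Two small points: in your displayed inequality the inner truncation should read $d(\cdot)/8$, not $d(x)/8$; and your claim $u_\xr^\xk\in L^1(\xO)$ is indeed valid, combining $u_\xr^\xk\in L^1_{\loc}(\xO)$ from Definition~\ref{definsolu} with boundedness of $W_{s,p}^{R}[\tau]$ away from $\supp\tau$. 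The paper's route is self-contained and ties the problem to capacities from the start; your route isolates the PDE content in one implication and treats $(ii)\Leftrightarrow(iii)\Leftrightarrow(iv)$ as known potential theory, which is cleaner provided one accepts the Phuc--Verbitsky equivalences for the truncated potentials $W_{s,p}^{2\diam(\xO)}$ as black boxes.
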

\noindent We note here that if {\small$p-1<q<\frac{N(p-1)}{N-sp}$\normalsize} then {\small$\frac{spq}{q-p+1}>N$\normalsize}, this implies that {\small$\mathrm{Cap}_{{sp,\frac{q}{q-p+1}}}(\{x\})>0$\normalsize} for any $x\in\BBR^N$ (see \cite[Section 2.6]{Adbook}). Hence, the statement (ii) in the above theorem is always satisfied in the subcritical range.

\medskip

\noindent \textbf{Organization of the paper.} Section \ref{secvery} is devoted to the study of the very weak solutions to problem \eqref{absorption} with $\tilde g\equiv0.$ In Section \ref{secabs}, we discuss problem ($P_+$) as well as Theorems \ref{subcritical} and \ref{powerabsorptionsupercritical} are proved in Subsections \ref{secsubcr} and \ref{secpower} respectively. In section \ref{secsource}, we deal with problem ($P_-$). More precisely, we prove Theorem \ref{sourcegeneral} in Subsection \ref{secsourcesubcr} and demonstrate Proposition \ref{exissource} and Theorem \ref{supercrpower1} in Subsection \ref{secsourcepower}.

\medskip
\noindent \textbf{Acknowledgement.} The author wish to thank Professor L. V\'eron for useful discussions. The research project was supported by the Hellenic Foundation for Research and Innovation (H.F.R.I.) under the  “2nd Call for H.F.R.I. Research Projects to support Post-Doctoral Researchers” (Project Number: 59).

\section{Very weak solutions}\label{secvery}
We start with the definition of the fractional spaces, which will be used frequently in this work. For any $s\in(0,1)$ and $q>0,$ we denote by $W^{s,q}(\BBR^N)$ the fractional space 
 
 \ba
W^{s,q}(\BBR^N):=\left\{\int_{\BBR^N}\int_{\BBR^N}\frac{|u(x)-u(y)|^q}{|x-y|^{N+sq}}{\dd} x{\dd} y+\int_{\BBR^N}|u|^q{\dd} x<\infty\right\},\label{sobolev}
\ea
endowed with the quasinorm
\bal
\norm{u}_{W^{s,q}(\BBR^N)}:=\left(\int_{\BBR^N}\int_{\BBR^N}\frac{|u(x)-u(y)|^q}{|x-y|^{N+sq}}{\dd} x{\dd} y\right)^{\frac{1}{q}}
+\left(\int_{\BBR^N}|u|^q{\dd} x\right)^{\frac{1}{q}}.
\eal
When $q\geq 1,$  $W^{s,q}(\BBR^N)$ is a Banach space and is called fractional Sobolev space. Finally, for any $p\in (1,\frac{N}{s}),$ we denote by $W_0^{s,p}(\xO)$ the closure of $C_0^\infty(\xO)$ in the norm $\norm{\cdot}_{W^{s,p}(\BBR^N)}$ and by $(W^{s,p}_0(\xO))^*$ its dual space.

\subsection{Weak solutions and a priori estimates} In this subsection, we introduce the notion of the weak solution of the following problem
\ba\label{weaksol}
\left\{
\BAL
L u &=\xm\quad&&\text{in}\;\; \xO\\
u&=0\quad &&\text{in}\;\; \BBR^N\setminus\xO,
\EAL\right.
\ea
where $\xm\in (W^{s,p}_0(\xO))^*.$ In addition, when $\xm\in L^{p'}(\xO),$ we establish a priori estimates, which will be used in the construction of the very weak solutions of the above problem with measure data.

\begin{definition}
Let $s\in(0,1),$ $1<p<\frac{N}{s}$ and $\xm\in (W^{s,p}_0(\xO))^*.$ We will say that $u\in W^{s,p}_0(\xO)$ is a weak solution of \eqref{weaksol},
if it satisfies
\bal
\int_{\BBR^N}\int_{\BBR^N}|u(x)-u(y)|^{p-2}(u(x)-u(y))(\xf(x)-\xf(y))K(x,y){\dd} x{\dd} y=<\xm,\xf>\quad\forall\xf\in W^{s,p}_0(\xO).
\eal
\end{definition}

Let us now give the definition of weak supersolutions of $L$ in $\xO.$
\begin{definition}
Let $s\in(0,1)$ and $1<p<\frac{N}{s}.$ We will say that $u\in W^{s,p}(\BBR^N)$ is a weak supersolution (resp. subsolution) of $L$ in $\xO,$
if and only if satisfies
\bal
\int_{\BBR^N}\int_{\BBR^N}|u(x)-u(y)|^{p-2}(u(x)-u(y))(\xf(x)-\xf(y))K(x,y){\dd} x{\dd} y\geq0\;(resp. \leq0)
\eal
for any nonnegative $\xf\in W^{s,p}_0(\xO).$
\end{definition}

Next we state the comparison principle.

\begin{proposition}[{\cite[Lemma 6]{KKP}}]\label{comparison}
Let $u\in W^{s,p}(\BBR^N)$ be a weak supersolution of $L$ in $\xO$ as well as let  $v\in  W^{s,p}(\BBR^N)$ be a weak subsolution of $L$ in $\xO$ such that $(v-u)_+\in W^{s,p}_0(\xO).$ Then, $u\geq v$ a.e. in $\BBR^N.$
\end{proposition}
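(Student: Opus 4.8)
The plan is to test both weak inequalities with the nonnegative function $\xf := (v-u)_+ \in W^{s,p}_0(\xO)$ and to exploit the monotonicity of the scalar map $\Phi(t):=|t|^{p-2}t$, for which $(\Phi(a)-\Phi(b))(a-b)\ge 0$ with equality precisely when $a=b$. Since $\xf$ is nonnegative and belongs to $W^{s,p}_0(\xO)$, it is admissible in the defining inequality of the supersolution $u$ and in that of the subsolution $v$. Writing $w:=v-u$, $a:=v(x)-v(y)$, $b:=u(x)-u(y)$, and subtracting the inequality for $u$ from the one for $v$, I obtain
\[
\mathcal{I}:=\int_{\BBR^N}\int_{\BBR^N}\bigl(\Phi(a)-\Phi(b)\bigr)\bigl(\xf(x)-\xf(y)\bigr)K(x,y)\,{\dd} x\,{\dd} y\le 0.
\]
All the integrals converge: $u,v\in W^{s,p}(\BBR^N)$, $\xf\in W^{s,p}_0(\xO)$, and $K(x,y)\le \xL_K|x-y|^{-N-sp}$, so Hölder's inequality with the Gagliardo seminorms applies.

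Next I would show that the integrand of $\mathcal I$ is nonnegative almost everywhere, which together with $\mathcal I\le 0$ forces it to vanish a.e. Set $A:=\{w>0\}$, so that $\xf=w\,\chi_A$. Splitting $\BBR^N\times\BBR^N$ into four pieces according to whether $x\in A$ and $y\in A$: on $A^c\times A^c$ the integrand is $0$; on $A\times A$ we have $\xf(x)-\xf(y)=w(x)-w(y)=a-b$, so the integrand equals $(\Phi(a)-\Phi(b))(a-b)K\ge 0$; on $A\times A^c$ we have $\xf(x)-\xf(y)=w(x)>0$ and $a-b=w(x)-w(y)>0$ (because $w(y)\le 0$ there), hence $\Phi(a)\ge\Phi(b)$ and the integrand is $\ge 0$; the piece $A^c\times A$ yields the same quantity after the change of variables $(x,y)\mapsto(y,x)$, using $K(x,y)=K(y,x)$. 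Since $K>0$ a.e., I conclude that $w(x)=w(y)$ for a.e.\ $(x,y)\in A\times A$, and $\Phi(a)=\Phi(b)$, i.e.\ $w(x)=w(y)$, for a.e.\ $(x,y)\in A\times A^c$.

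It remains to deduce $|A|=0$. Suppose $|A|>0$. By Fubini and the first of the two relations above, $w$ is a.e.\ equal to a constant $c$ on $A$, and $c>0$ since $w>0$ there. On the other hand, $\xf=w_+\in W^{s,p}_0(\xO)$ vanishes a.e.\ outside the bounded set $\xO$, so $A^c$ has infinite measure and $A\times A^c$ has positive measure; the second relation then gives $w(y)=c>0$ for a.e.\ $y$ in a positive-measure subset of $A^c$, contradicting $w\le 0$ on $A^c$. Hence $|A|=0$, that is $v\le u$ a.e.\ in $\BBR^N$.

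The main obstacle is the sign bookkeeping in the two cross terms and, above all, the final contradiction: the nonlocal nature of $W^{s,p}_0(\xO)$ — namely that its elements vanish on the \emph{unbounded} complement of $\xO$ — is precisely what replaces the boundary argument used in the local case, and one must use it to turn ``$w$ constant on $A$'' into an impossibility. The monotonicity of $\Phi$ and the admissibility of $(v-u)_+$ as a test function are the only other ingredients, and both are routine.
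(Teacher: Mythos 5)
Your proof is correct. Note that the paper itself gives no proof of this proposition: it simply cites \cite[Lemma 6]{KKP}, and your argument is essentially the standard one from that reference (test both weak inequalities against $\xf=(v-u)_+$, subtract, use the monotonicity of $\Phi(t)=|t|^{p-2}t$ together with the four-way decomposition by $A=\{v>u\}$ to see that the resulting integrand is nonnegative, and then exploit its vanishing).

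One small remark on economy: the detour through ``$w$ is a.e.\ constant on $A$'' is not needed. Once you know the integrand vanishes a.e.\ on $A\times A^c$, the contradiction is immediate from that piece alone. For a.e.\ $(x,y)\in A\times A^c$ you have $\xf(x)-\xf(y)=w(x)>0$ and $K(x,y)>0$, so the vanishing of the integrand forces $\Phi(a)=\Phi(b)$, hence $a=b$, hence $w(x)=w(y)$; but $w(x)>0\geq w(y)$, which is impossible. Since $(v-u)_+\in W^{s,p}_0(\xO)$ forces $A\subset\xO$ up to a null set while $A^c\supset\BBR^N\setminus\xO$ has infinite measure, $|A|>0$ would give $|A\times A^c|>0$, a contradiction; so $|A|=0$. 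The diagonal piece $A\times A$ and the Fubini/constancy step are true but redundant.
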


In view of the proof \cite[Theorem 2.3]{diCKP_local}, we may obtain the following existence result.

\begin{proposition}\label{existenceweaksol}
For any $\xm\in (W^{s,p}_0(\xO))^*$ there exists a unique weak solution of \eqref{weaksol}.
\end{proposition}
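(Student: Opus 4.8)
The plan is to realize the weak solution as the minimizer of the strictly convex energy functional associated with $L$. Define, for $u\in W^{s,p}_0(\xO)$,
\[
\CE(u):=\frac{1}{p}\int_{\BBR^N}\int_{\BBR^N}|u(x)-u(y)|^{p}K(x,y)\,{\dd} x{\dd} y-\langle\xm,u\rangle.
\]
First I would check that $\CE$ is well defined and finite on $W^{s,p}_0(\xO)$: the double-integral term is controlled above and below by $\frac{1}{p}\xL_K^{\pm1}[u]_{W^{s,p}(\BBR^N)}^p$ thanks to the ellipticity condition on $K$, and the linear term is finite because $\xm\in(W^{s,p}_0(\xO))^*$. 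Using the fractional Poincaré inequality on the bounded set $\xO$ (recall $p<\tfrac Ns$, so $W^{s,p}_0(\xO)$ embeds into $L^p(\xO)$ and the Gagliardo seminorm is an equivalent norm), one gets the coercivity estimate
\[
\CE(u)\ \geq\ \frac{1}{p\,\xL_K}[u]_{W^{s,p}(\BBR^N)}^p-\|\xm\|_{(W^{s,p}_0(\xO))^*}\,\|u\|_{W^{s,p}(\BBR^N)}\ \xrightarrow[\ \|u\|_{W^{s,p}(\BBR^N)}\to\infty\ ]{}\ +\infty .
\]
Hence $\CE$ is bounded below on $W^{s,p}_0(\xO)$ and its sublevel sets are bounded.

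Next I would invoke the direct method of the calculus of variations. Take a minimizing sequence $(u_n)$; by coercivity it is bounded in the reflexive Banach space $W^{s,p}_0(\xO)$ (here $1<p<\tfrac Ns$ guarantees reflexivity), so up to a subsequence $u_n\rightharpoonup u$ weakly in $W^{s,p}_0(\xO)$ and, by compactness of the embedding $W^{s,p}_0(\xO)\hookrightarrow L^p(\xO)$, strongly in $L^p(\xO)$ and a.e. The map $u\mapsto\frac1p\iint|u(x)-u(y)|^pK(x,y)\,{\dd} x{\dd} y$ is convex and strongly lower semicontinuous (it is a nonnegative convex integrand, and $u_n(x)-u_n(y)\to u(x)-u(y)$ a.e. on $\BBR^N\times\BBR^N$, so Fatou's lemma applies), hence weakly lower semicontinuous; the linear term is weakly continuous. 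Therefore $\CE(u)\leq\liminf_n\CE(u_n)=\inf\CE$, so $u$ is a minimizer. Writing the Euler–Lagrange equation, i.e. computing $\frac{{\dd}}{{\dd} t}\CE(u+t\xf)\big|_{t=0}=0$ for $\xf\in C_0^\infty(\xO)$ and then extending to $\xf\in W^{s,p}_0(\xO)$ by density, yields exactly
\[
\int_{\BBR^N}\int_{\BBR^N}|u(x)-u(y)|^{p-2}(u(x)-u(y))(\xf(x)-\xf(y))K(x,y)\,{\dd} x{\dd} y=\langle\xm,\xf\rangle,
\]
so $u$ is a weak solution. (One should justify differentiating under the integral sign; the integrand $t\mapsto\frac1p|a+t(\xf(x)-\xf(y))|^p$ has derivative bounded, after one applies the elementary inequality $\big||a+tb|^{p-2}(a+tb)\big|\le C(|a|^{p-1}+|b|^{p-1})$ for $|t|\le1$, by an $L^1(\BBR^N\times\BBR^N,K\,{\dd} x{\dd} y)$ function, so dominated convergence gives the differentiation.)

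For uniqueness I would use strict monotonicity of the operator, which is the real substance here. If $u_1,u_2$ are two weak solutions, subtract the two identities and test with $\xf=u_1-u_2\in W^{s,p}_0(\xO)$ to get
\[
\int_{\BBR^N}\int_{\BBR^N}\big(|u_1(x)-u_1(y)|^{p-2}(u_1(x)-u_1(y))-|u_2(x)-u_2(y)|^{p-2}(u_2(x)-u_2(y))\big)\big((u_1-u_2)(x)-(u_1-u_2)(y)\big)K(x,y)\,{\dd} x{\dd} y=0.
\]
By the standard monotonicity inequality $(|a|^{p-2}a-|b|^{p-2}b)(a-b)\ge c_p|a-b|^p$ for $p\ge2$ (and $\ge c_p|a-b|^2(|a|+|b|)^{p-2}$ for $1<p<2$), and since $K>0$ a.e., the integrand is pointwise nonnegative; vanishing of its integral forces $(u_1-u_2)(x)=(u_1-u_2)(y)$ for a.e.\ $(x,y)$, hence $u_1-u_2$ is a.e.\ constant, and being in $W^{s,p}_0(\xO)$ (so zero outside $\xO$) it must be $0$. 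Alternatively, uniqueness is immediate from strict convexity of $\CE$. I expect the main obstacle to be merely the bookkeeping in the case $1<p<2$ — the Gagliardo seminorm is not a norm coming from a uniformly convex structure in the naive way, so one uses the weighted-$L^2$ form of the monotonicity inequality together with a Hölder argument to recover control of $\|u_1-u_2\|_{W^{s,p}_0(\xO)}$; everything else is routine once the variational set-up and the ellipticity bounds on $K$ are in place.
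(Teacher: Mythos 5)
Your proof is correct and follows precisely the approach the paper intends: the paper does not give its own argument but simply cites \cite[Theorem 2.3]{diCKP_local}, which proves existence by the direct method for the associated convex energy (with uniqueness from strict convexity/monotonicity), exactly as you have written out. One small remark: reflexivity of $W^{s,p}_0(\xO)$ holds for all $1<p<\infty$, not because $p<N/s$; and in the final uniqueness step, even for $1<p<2$ the pointwise inequality $(|a|^{p-2}a-|b|^{p-2}b)(a-b)\geq c_p|a-b|^2(|a|+|b|)^{p-2}$ already forces $a=b$ a.e.\ on the set where the integrand vanishes (the weight is strictly positive unless $a=b=0$), so no further H\"older bookkeeping is actually needed.
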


In order to state the first a priori estimate for the weak solution of \eqref{weaksol}, we need to give the definition and the main properties of Marcinkiewicz spaces. Let $D \subset \R^N$ be a domain. Denote  $L^p_w(D)$, $1 \leq p < \infty,$ the
weak $L^p$ space (or Marcinkiewicz space) defined as follows. A measurable function $f$ in $D$
belongs to this space if there exists a constant $c$ such that
\bel{distri} \gl_f(a):=|\{x \in D: |f(x)|>a\}|\leq ca^{-p},
\forevery a>0. \ee
The function $\gl_f$ is called the distribution function of $f$. For $p \geq 1$, denote
\bal
 L^p_w(D)=\{ f \text{ Borel measurable}:
\sup_{a>0}a^p\gl_f(a)<\infty\},
\eal
\ba\label{semi}
\norm{f}^*_{L^p_w(D)}=(\sup_{a>0}a^p\gl_f(a))^{\frac{1}{p}}. \ea
The $\norm{.}_{L^p_w(D)}^*$ is not a norm, but for $p>1$, it is
equivalent to the norm
\ba\label{normLw} \norm{f}_{L^p_w(D)}=\sup\left\{
\frac{\int_{\gw}|f|{\dd} x}{|\gw|^{1/p'}}:\gw \sbs D, \gw \text{
	measurable},\, 0<|\gw|<\infty \right\}. \ea
More precisely,
\bel{equinorm} \norm{f}^*_{L^p_w(D)} \leq \norm{f}_{L^p_w(D)}
\leq \myfrac{p}{p-1}\norm{f}^*_{L^p_w(D)}. \ee
Notice that,
\bal
L_w^p(D) \sbs L^{r}(D) \forevery r \in [1,p).
\eal
From \eqref{semi} and \eqref{equinorm}, one can derive the following estimate which is useful in the sequel.
\ba \label{ue}
\int_{\{|u| \geq s\} }{\dd} x \leq s^{-p}\norm{u}^p_{L_w^p(D)}.
\ea

\begin{proposition}\label{weaklp}
Let $\xm\in L^{p'}(\xO)$ and $u\in W^{s,p}_0(\xO)$ be the unique weak solution of \eqref{weaksol}. Then there exists a positive constant $C=C(p,s,N,\xL_K)$ such that

\ba\label{est7}
\norm{|u|^{p-1}}^{*}_{L^{\frac{N}{N-sp}}_w(\BBR^N)}\leq C \int_\xO|\xm|{\dd} x.
\ea

\end{proposition}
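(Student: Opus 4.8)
The plan is to obtain the Marcinkiewicz estimate by testing the equation against a suitable truncation of $u$ and running the classical Stampacchia-type iteration, adapted to the nonlocal setting. First I would fix $k>0$ and use $\varphi = T_k(u) = \max(-k,\min(u,k)) \in W_0^{s,p}(\Omega)$ as a test function in the weak formulation. The key algebraic point is that for the nonlocal bilinear form, the pairing of $|u(x)-u(y)|^{p-2}(u(x)-u(y))$ with $(T_k(u)(x)-T_k(u)(y))$ is pointwise nonnegative and is bounded below by $|T_k(u)(x)-T_k(u)(y)|^p$ (this is the standard monotonicity inequality for the map $t\mapsto T_k(t)$ composed with $|\cdot|^{p-2}(\cdot)$; one checks it by cases according to whether $u(x),u(y)$ lie inside or outside $[-k,k]$). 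Combined with the lower ellipticity bound $K(x,y)\ge \Lambda_K^{-1}|x-y|^{-N-sp}$, this gives
\ba\label{eq:truncbound}
\Lambda_K^{-1}\,[T_k(u)]_{W^{s,p}(\BBR^N)}^p \le \int_{\BBR^N}\int_{\BBR^N}|u(x)-u(y)|^{p-2}(u(x)-u(y))(T_k(u)(x)-T_k(u)(y))K(x,y)\,{\dd} x{\dd} y = \int_\Omega T_k(u)\,{\dd}\xm,
\ea
and the right-hand side is bounded by $k\int_\Omega|\xm|\,{\dd} x$.

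Next I would invoke the fractional Sobolev inequality: since $1<p<\frac{N}{s}$, the embedding $W_0^{s,p}(\Omega)\hookrightarrow L^{p_s^*}(\BBR^N)$ with $p_s^* = \frac{Np}{N-sp}$ holds, so $\|T_k(u)\|_{L^{p_s^*}(\BBR^N)}^p \le C\,[T_k(u)]_{W^{s,p}(\BBR^N)}^p \le C\Lambda_K\,k\int_\Omega|\xm|\,{\dd}x$. On the set $\{|u|\ge k\}$ we have $|T_k(u)| = k$, hence
\ba\label{eq:levelset}
k^{p_s^*}\,\bigl|\{|u|\ge k\}\bigr| \le \int_{\BBR^N}|T_k(u)|^{p_s^*}\,{\dd}x \le \Bigl(C\Lambda_K\,k\,\|\xm\|_{L^1(\Omega)}\Bigr)^{p_s^*/p}.
\ea
Rearranging, $\bigl|\{|u|\ge k\}\bigr| \le (C\Lambda_K\|\xm\|_{L^1})^{p_s^*/p}\,k^{p_s^*(1/p - 1)} = (C\Lambda_K\|\xm\|_{L^1})^{p_s^*/p}\,k^{-\frac{N(p-1)}{N-sp}}$. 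Writing $v = |u|^{p-1}$ and $a = k^{p-1}$, the level set $\{v\ge a\}$ coincides with $\{|u|\ge k\}$, so $|\{v\ge a\}| \le C'\, a^{-\frac{N}{N-sp}}(\|\xm\|_{L^1})^{\frac{N}{N-sp}\cdot\frac{p-1}{p}\cdot\frac{p}{p-1}}$; tracking exponents carefully, the total mass enters to the power $\frac{N}{N-sp}$, which is exactly the homogeneity needed so that $\sup_{a>0} a^{\frac{N}{N-sp}}|\{v\ge a\}| \le C''\,\|\xm\|_{L^1(\Omega)}^{\frac{N}{N-sp}}$. Taking the $\frac{N-sp}{N}$-th root and recalling the definition \eqref{semi} of $\|\cdot\|^*_{L^{N/(N-sp)}_w}$ yields \eqref{est7}.

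The main obstacle I anticipate is not the iteration itself — which is short because a single truncation level already gives the weak-type bound without a genuine Moser/Stampacchia recursion — but rather the justification that $T_k(u)$ is an admissible test function and the careful bookkeeping of the exponents so that the constant depends only on $p,s,N,\Lambda_K$ and the total mass $\int_\Omega|\xm|\,{\dd}x$ (and crucially \emph{not} on $\|\xm\|_{L^{p'}}$ or on $|\Omega|$). Admissibility follows since $u\in W_0^{s,p}(\Omega)$ by Proposition~\ref{existenceweaksol}, so $T_k(u)\in W_0^{s,p}(\Omega)$ and may be used directly; the pointwise monotonicity inequality for the nonlocal form, already used implicitly in the proofs of the comparison principle (Proposition~\ref{comparison}) and of uniqueness, provides \eqref{eq:truncbound}. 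One should also note $\xm\in L^{p'}(\Omega)\subset \mathfrak M_b(\Omega)$ so $\int_\Omega T_k(u)\,{\dd}\xm$ is literally $\int_\Omega T_k(u)\,\xm\,{\dd}x$ and is finite, and that the Sobolev constant in the embedding depends only on $N,s,p$ (not on $\Omega$), which is what keeps the final constant clean.
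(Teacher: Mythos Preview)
Your proposal is correct and follows essentially the same approach as the paper: test with $T_k(u)$, use the pointwise monotonicity inequality $|u(x)-u(y)|^{p-2}(u(x)-u(y))(T_k(u)(x)-T_k(u)(y))\ge |T_k(u)(x)-T_k(u)(y)|^p$ together with the lower ellipticity bound on $K$, then apply the fractional Sobolev embedding and Chebyshev on the level set $\{|u|\ge k\}$. Your write-up is in fact slightly more detailed than the paper's (which records the same truncation estimate and level-set computation in a couple of lines), and your remarks on admissibility of $T_k(u)$ and independence of the constant from $|\Omega|$ and $\|\xm\|_{L^{p'}}$ are well placed.
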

\begin{proof}

Let $k>0.$ Taking $T_k(u)$ as test function and using the fact that 

\bal
|u(x)-u(y)|^{p-2}(u(x)-u(y))(T_k(u)(x)-T_k(u)(y))\geq |T_k(u)(x)-T_k(u)(y)|^p\quad\forall x,y\in \BBR^N,
\eal
we obtain

\ba\label{trunc}
\int_{\BBR^N}\int_{\BBR^N}\frac{|T_k(u)(x)-T_k(u)(y)|^p}{|x-y|^{N+sp}}{\dd} x{\dd} y\leq \xL_K k \int_{\xO}|\xm| {\dd} x.
\ea
Now, by the above inequality and the fractional Sobolev inequality we have

\bal
|\{|u(x)|\geq k\}|= |\{|T_k(u)(x)|\geq k\}|\leq k^{-\frac{Np}{N-sp}}\int_{\BBR^N}|T_{k}(u)(x)|^{\frac{Np}{N-sp}}{\dd} x\leq Ck^{-\frac{N(p-1)}{N-sp}}\left(\int_\xO|\xm|{\dd} x\right)^{\frac{N}{N-sp}},
\eal
which implies the desired result.
\end{proof}

\begin{proposition}
Let $\xm\in L^{p'}(\xO)$ and $u\in W^{s,p}_0(\BBR^N)$ be the unique weak solution of \eqref{weaksol}. Then there exists a positive constant $C=C(p,s,N,\xL_K)$ such that

\ba\label{est5}
\int_{\BBR^N}\int_{\BBR^N}\frac{|u(x)-u(y)|^p}{(d+|u(x)|+|u(y)|)^\xi}\frac{{\dd} x{\dd} y}{|x-y|^{N+sp}}\leq \frac{Cd^{1-\xi}}{(\xi-1)} \int_{\xO}|\xm| {\dd} x
\ea
for any $\xi>1$ and $d>0.$
\end{proposition}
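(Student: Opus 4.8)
The plan is to test the weak formulation of \eqref{weaksol} against a bounded increasing function of $u$, mimicking the classical renormalized a priori estimate for the $p$-Laplacian. For fixed $\xi>1$ and $d>0$, set
\[
\Phi(t):=\frac{\sign(t)}{\xi-1}\Big(d^{1-\xi}-(d+|t|)^{1-\xi}\Big),\qquad t\in\BBR .
\]
This $\Phi$ is odd, nondecreasing and of class $C^1$ with $\Phi'(t)=(d+|t|)^{-\xi}$, and it obeys the uniform bound $\|\Phi\|_{L^\infty(\BBR)}\le d^{1-\xi}/(\xi-1)$ (since $\xi>1$ forces $0\le d^{1-\xi}-(d+|t|)^{1-\xi}\le d^{1-\xi}$). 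Because $\Phi$ is Lipschitz and $\Phi(0)=0$, the function $\Phi(u)$ lies in $W^{s,p}_0(\xO)$ whenever $u$ does — exactly as $T_k(u)$ is used in the proof of Proposition \ref{weaklp} — hence it is an admissible test function.

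The algebraic heart of the argument is the pointwise inequality
\[
|a-b|^{p-2}(a-b)\big(\Phi(a)-\Phi(b)\big)\ \ge\ \frac{|a-b|^{p}}{(d+|a|+|b|)^{\xi}},\qquad\forall\,a,b\in\BBR .
\]
Both sides are symmetric under interchanging $a$ and $b$ and vanish when $a=b$, so it suffices to treat $a>b$. Writing $\Phi(a)-\Phi(b)=\int_b^a(d+|t|)^{-\xi}{\dd} t$ and using $|t|\le\max(|a|,|b|)\le|a|+|b|$ for $t\in[b,a]$, we obtain $\Phi(a)-\Phi(b)\ge(a-b)(d+|a|+|b|)^{-\xi}$; multiplying by $(a-b)^{p-1}=|a-b|^{p-2}(a-b)>0$ gives the claim.

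Now choose $\xf=\Phi(u)$ in the definition of the weak solution. Since $\xm\in L^{p'}(\xO)$,
\[
\Big|\int_\xO\Phi(u)\,\xm\,{\dd} x\Big|\ \le\ \|\Phi\|_{L^\infty(\BBR)}\int_\xO|\xm|\,{\dd} x\ \le\ \frac{d^{1-\xi}}{\xi-1}\int_\xO|\xm|\,{\dd} x .
\]
On the other side, the integrand $|u(x)-u(y)|^{p-2}(u(x)-u(y))\big(\Phi(u(x))-\Phi(u(y))\big)$ is nonnegative, so combining the pointwise inequality above with the ellipticity bound $K(x,y)\ge\xL_K^{-1}|x-y|^{-N-sp}$ yields
\[
\xL_K^{-1}\int_{\BBR^N}\int_{\BBR^N}\frac{|u(x)-u(y)|^{p}}{(d+|u(x)|+|u(y)|)^{\xi}}\,\frac{{\dd} x\,{\dd} y}{|x-y|^{N+sp}}\ \le\ \int_\xO\Phi(u)\,\xm\,{\dd} x .
\]
Chaining the last two displays gives \eqref{est5} with $C=\xL_K$. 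The computations are elementary; the only points needing a little care are the verification that $\Phi(u)\in W^{s,p}_0(\xO)$ (the standard fact that post-composition with a Lipschitz map fixing the origin preserves $W^{s,p}_0$) and the sign bookkeeping in the pointwise inequality, which is the most error-prone but not genuinely hard part — no real obstacle is anticipated.
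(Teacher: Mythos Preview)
Your proof is correct and follows essentially the same approach as the paper. Your single test function $\Phi(u)$ equals $(\xi-1)^{-1}(\xf_++\xf_-)$ in the paper's notation (where $\xf_\pm:=\pm(d^{1-\xi}-(d+u_\pm)^{1-\xi})$), and your pointwise inequality via $\Phi(a)-\Phi(b)=\int_b^a(d+|t|)^{-\xi}\,{\dd} t$ is precisely the integral representation the paper writes separately for $\xf_+$ and $\xf_-$; the only cosmetic difference is that you combine the positive and negative parts into one odd function from the outset, which makes the bookkeeping a bit cleaner.
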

\begin{proof}
The proof is very similar to that of \cite[Lemma 3.1]{KMS} (see also \cite[Lemma 8.4.1]{KMS2}). For the sake of convenience we give it below.

Set $\xf_\pm:=\pm(d^{1-\xi}-(d+u_\pm)^{1-\xi}).$ Using $\xf_\pm$ as test function we obtain

\bal
\int_{\BBR^N}\int_{\BBR^N}|u(x)-u(y)|^{p-2}(u(x)-u(y))(\xf_\pm(x)-\xf_\pm(y))K(x,y){\dd} x{\dd} y=\int_\xO \xf_\pm\xm {\dd} x.
\eal
Now,

\bal
(\xf_\pm(x)-\xf_\pm(y))=\pm(\xi-1)(u_\pm(x)-u_\pm(y))\int_0^1(d+t u_\pm(y)+(1-t)u_\pm(x))^{-\xi} dt,
\eal
which implies

\bal
&|u(x)-u(y)|^{p-2}(u(x)-u(y))(\xf_\pm(x)-\xf_\pm(y))K(x,y)\\
&\quad\quad\geq (\xi-1)|u(x)-u(y)|^{p-2}(u_\pm(x)-u_\pm(y))^2(d+|u(y)|+|u(x)|)^{-\xi}.
\eal

Combining all above we can easily reach the desired result.

\end{proof}

We conclude this subsection by the following a priori estimate for the weak solutions of \eqref{weaksol} in the whole range $1<p<\frac{N}{s}.$

\begin{proposition}\label{lpest}
Let $\xm\in L^{p'}(\xO)$ and $u\in W^{s,p}_0(\BBR^N)$ be the unique weak solution of \eqref{weaksol}.  For any $q\in(0,\frac{N(p-1)}{N-s})$ and $h\in(0,s),$ there exists a positive constant $c$ depending only on $N,s,p,\xL_K,q$ and $|\xO|$ such that

\ba\label{est1}
\left(\int_{\BBR^N}\int_{\BBR^N}\frac{|u(x)-u(y)|^q}{|x-y|^{N+hq}}{\dd} x{\dd} y\right)^\frac{1}{q}
\leq c\bigg(\int_\xO|\xm|{\dd} x\bigg)^\frac{1}{p-1}.
\ea
\end{proposition}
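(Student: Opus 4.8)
The plan is to interpolate, via Hölder's inequality, between the degenerate energy bound \eqref{est5} and the Marcinkiewicz estimate \eqref{est7} of Proposition~\ref{weaklp}, in the spirit of \cite[Lemma 3.1]{KMS}. We may assume $M:=\int_{\xO}|\xm|\,{\dd} x>0$, since otherwise $u\equiv0$ by uniqueness (Proposition~\ref{existenceweaksol}) and \eqref{est1} is trivial. Two preliminary observations will be used. First, combining \eqref{est7} with the inclusion of weak-$L^m$ into $L^r$ on sets of finite measure (cf.\ \eqref{ue}) and the fact that $u$ vanishes outside the bounded set $\xO$, one gets
\[
\|u\|_{L^r(\xO)}\le c(N,p,s,\xL_K,r,|\xO|)\,M^{\frac{1}{p-1}}\qquad\text{for every }\ 0<r<\tfrac{N(p-1)}{N-sp}.
\]
Second, since $sp<N$ we have $q<\frac{N(p-1)}{N-s}<p$ as well as $\frac{N(p-1)}{N-s}<\frac{N(p-1)}{N-sp}$; hence $\frac pq$ and $\frac{p}{p-q}$ are conjugate exponents strictly larger than $1$, and one may fix $\xi>1$ with $\frac{\xi q}{p-q}<\frac{N(p-1)}{N-sp}$. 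The assumption $q<\frac{N(p-1)}{N-s}$ is precisely what makes such a choice of $\xi>1$ possible.

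Next I would split the Gagliardo integral according to whether $|x-y|>1$ or $|x-y|\le1$. On $\{|x-y|>1\}$ one uses $|u(x)-u(y)|^q\le c_q\big(|u(x)|^q+|u(y)|^q\big)$ together with $\int_{\{|z|>1\}}|z|^{-N-hq}\,{\dd} z<\infty$ to bound this piece by $c\,\|u\|_{L^q(\xO)}^q\le c\,M^{\frac{q}{p-1}}$, using the first display with $r=q$. On $\{|x-y|\le1\}$ the integrand vanishes unless $x\in\xO$ or $y\in\xO$, hence unless both $x$ and $y$ lie in the bounded set $\xO_1:=\{z:\dist(z,\xO)\le1\}$; on that product I would write, for a parameter $d>0$,
\[
\frac{|u(x)-u(y)|^q}{|x-y|^{N+hq}}=A(x,y)^{\frac qp}\,B(x,y)^{\frac{p-q}{p}},\qquad A:=\frac{|u(x)-u(y)|^p}{(d+|u(x)|+|u(y)|)^{\xi}\,|x-y|^{N+sp}},\qquad B:=\frac{(d+|u(x)|+|u(y)|)^{\frac{\xi q}{p-q}}}{|x-y|^{N-\frac{pq(s-h)}{p-q}}},
\]
where the exponents are dictated by matching the powers of $|u(x)-u(y)|$, of $d+|u(x)|+|u(y)|$ and of $|x-y|$. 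Applying Hölder with exponents $\frac pq$ and $\frac{p}{p-q}$, the $A$–integral is estimated by enlarging the domain to $\BBR^N\times\BBR^N$ and invoking \eqref{est5}, i.e.\ $\int\!\!\int A\le\frac{Cd^{1-\xi}}{\xi-1}M$; and since $h<s$ gives $N-\frac{pq(s-h)}{p-q}<N$, the kernel in $B$ is integrable near the diagonal, so writing $(d+|u(x)|+|u(y)|)^{\frac{\xi q}{p-q}}\le c\big(d^{\frac{\xi q}{p-q}}+|u(x)|^{\frac{\xi q}{p-q}}+|u(y)|^{\frac{\xi q}{p-q}}\big)$ and invoking the first display with $r=\frac{\xi q}{p-q}$ yields $\int\!\!\int_{\xO_1\times\xO_1,\,|x-y|\le1}B\le c\big(d^{\frac{\xi q}{p-q}}+M^{\frac{\xi q}{(p-q)(p-1)}}\big)$.

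Finally I would take $d=M^{\frac{1}{p-1}}$, which balances the two terms in the bound for the $B$–integral; substituting into the Hölder product
\[
\int\!\!\int_{\{|x-y|\le1\}}\frac{|u(x)-u(y)|^q}{|x-y|^{N+hq}}\,{\dd} x\,{\dd} y\le\Big(\tfrac{Cd^{1-\xi}}{\xi-1}M\Big)^{\frac qp}\Big(c\,d^{\frac{\xi q}{p-q}}+c\,M^{\frac{\xi q}{(p-q)(p-1)}}\Big)^{\frac{p-q}{p}}
\]
and computing the exponents collapses the right-hand side to $c\,M^{\frac{q}{p-1}}$. Adding the two regions gives $\big(\int\!\!\int_{\BBR^N\times\BBR^N}\frac{|u(x)-u(y)|^q}{|x-y|^{N+hq}}{\dd} x{\dd} y\big)^{1/q}\le c\,M^{\frac1{p-1}}$, which is \eqref{est1}. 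The step I expect to require most care is the Hölder bookkeeping combined with the observation that, on $\{|x-y|\le1\}$, the integrand is supported in the bounded set $\xO_1\times\xO_1$: without this, the contribution of the constant $d$ to the second Hölder factor diverges at infinity. A secondary subtlety is checking that the threshold $q<\frac{N(p-1)}{N-s}$ is exactly the condition that permits the choice $\xi>1$.
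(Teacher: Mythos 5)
Your proposal is correct and follows essentially the same route as the paper: a Hölder interpolation between the degenerate energy bound \eqref{est5} (with a weight $(d+|u(x)|+|u(y)|)^{-\xi}$) and the weak-Lebesgue estimate \eqref{est7}, with $\xi>1$ chosen so that $\frac{\xi q}{(p-1)(p-q)}<\frac{N}{N-sp}$ (exactly what $q<\frac{N(p-1)}{N-s}$ allows), followed by the balanced choice $d\approx M^{1/(p-1)}$. The only differences are cosmetic: the paper restricts the double integral to a fixed ball $B_{2R}(x_0)\supset\xO$ rather than your $\{|x-y|>1\}$/$\{|x-y|\le1\}\cap(\xO_1\times\xO_1)$ split, and it takes $d=\big(\int_\xO|u|^{\xi q/(p-q)}\big)^{(p-q)/(\xi q)}$ instead of $d=M^{1/(p-1)}$, but both choices yield $d\lesssim M^{1/(p-1)}$ by the same weak-$L^{N/(N-sp)}$ argument.
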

\begin{proof}
The proof is an adaptation of the argument in \cite[Lemma 3.2]{KMS}.
Let $R=\diam(\xO)$ and $x_0\in\xO.$ Taking into account that $u=0$ a.e. in $\BBR^N\setminus\xO,$ we can easily prove that

\ba\label{nest6}\BAL
\int_{\BBR^N}\int_{\BBR^N}\frac{|u(x)-u(y)|^q}{|x-y|^{N+hq}}{\dd} x{\dd} y&\approx \int_{B_{2R}(x_0)}\int_{B_{2R}(x_0)}\frac{|u(x)-u(y)|^q}{|x-y|^{N+hq}}{\dd} x{\dd} y+\int_\xO |u|^q {\dd} x\\
&\approx  \int_{B_{2R}(x_0)}\int_{B_{2R}(x_0)}\frac{|u(x)-u(y)|^q}{|x-y|^{N+hq}}{\dd} x{\dd} y.
\EAL
\ea 

Now, by H\"{older} inequality we obtain

\ba\label{est6}\BAL
&\int_{B_{2R}(x_0)}\int_{B_{2R}(x_0)}\frac{|u(x)-u(y)|^q}{|x-y|^{N+hq}}{\dd} x{\dd} y\\
&=\int_{B_{2R}(x_0)}\int_{B_{2R}(x_0)}\left(\frac{|u(x)-u(y)|^p}{(d+|u(x)|+|u(y)|)^\xi|x-y|^{ps}}(d+|u(x)|+|u(y)|)^\xi|x-y|^{p(s-h)}\right)^\frac{q}{p}
\frac{{\dd} x{\dd} y}{|x-y|^N}\\
&\leq\left(\int_{B_{2R}(x_0)}\int_{B_{2R}(x_0)}\frac{|u(x)-u(y)|^p}{(d+|u(x)|+|u(y)|)^\xi|x-y|^{N+sp}}{\dd} x{\dd} y\right)^\frac{q}{p}\\
&\quad\quad\cdot \left(\int_{B_{2R}(x_0)}\int_{B_{2R}(x_0)}\frac{(d+|u(x)|+|u(y)|)^{\frac{\xi q}{p-q}}}{|x-y|^{N-\frac{qp(s-h)}{p-q}}}{\dd} x{\dd} y\right)^\frac{p-q}{p}.
\EAL\ea
Setting 
\bal
d=\left(\int_\xO|u(y)|^{\frac{\xi q}{p-q}} {\dd} x\right)^\frac{p-q}{\xi q}
\eal
and combining \eqref{est5} and \eqref{est6}, we conclude 

\ba\label{d}
\int_{B_{2R}(x_0)}\int_{B_{2R}(x_0)}\frac{|u(x)-u(y)|^q}{|x-y|^{N+hq}}{\dd} x{\dd} y\leq c d^{\frac{q}{p}}\bigg(\int_\xO|\xm|{\dd} x\bigg)^\frac{q}{p}.
\ea
Since $q<\frac{N(p-1)}{N-s},$ we may choose  $\xi>1$ such that $1<\xg:=\frac{\xi q}{(p-1)(p-q)}<\frac{N}{N-sp}.$ Hence, by \eqref{equinorm} and \eqref{est7}, we deduce

\bal
\left(\int_\xO |u|^{\xg(p-1)}\right)^\frac{1}{\xg}\leq C(\xg,N,p,s,|\xO|,\xL_K)\int_\xO|\xm|{\dd} x,
\eal
which in turn implies 
\bal
d\leq C(\xg,N,p,s,|\xO|,\xL_K)\left(\int_\xO|\xm|{\dd} x\right)^{\frac{1}{p-1}}.
\eal
The desired result follows by \eqref{nest6}, \eqref{d} and the above inequality. 
\end{proof}

\subsection{Existence and main properties}

In this subsection, we construct a very weak solution to problem \eqref{weaksol} which possesses several important properties, such as it satisfies
pointwise estimates in terms of Wolff’s potential. These estimates play an important role in the study of problems \eqref{main}.

We start with the following existence result.
\begin{proposition}\label{sola}
Let $\xm\in \mathfrak{M}_b(\xO).$ Then there exists a very weak solution to \eqref{weaksol} satisfying 

\ba\label{estn1}
\norm{|u|^{p-1}}^{*}_{L^{\frac{N}{N-sp}}_w(\BBR^N)}\leq C_1(N,p,s,\xL_K)\xm(\xO)
\ea
and

\ba\label{estn2}
\int_{\BBR^N}\int_{\BBR^N}\frac{|T_k(u)(x)-T_k(u)(y)|^p}{|x-y|^{N+sp}}{\dd} x{\dd} y\leq k \xL_K|\xm|(\xO)\quad\forall k>0.
\ea

In addition, for any $q\in(0,\frac{N(p-1)}{N-s})$ and $h\in(0,s),$ there exist a positive constant $C_2=C_2(N,p,s,\xL_K, q,h,|\xO|)$ such that 
\ba\label{est2}
\left(\int_{\BBR^N}\int_{\BBR^N}\frac{|u(x)-u(y)|^q}{|x-y|^{N+hq}}{\dd} x{\dd} y\right)^\frac{1}{q}
\leq C_2|\xm|(\xO)^\frac{1}{p-1}.
\ea

\end{proposition}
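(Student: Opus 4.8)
The plan is to obtain $u$ as a SOLA, that is, as the limit of weak solutions of \eqref{weaksol} with smooth data, following the scheme of \cite{KMS}. First I would regularize the datum: writing $\xm=\xm^+-\xm^-$ and mollifying each part (truncating first to $\{x\in\xO:\dist(x,\partial\xO)>\delta\}$ to keep supports inside $\xO$), one produces a sequence $\xm_n\in C_0^\infty(\xO)\subset L^{p'}(\xO)$ with $\int_\xO|\xm_n|\,{\dd} x\le|\xm|(\xO)$ for every $n$ and $\xm_n\to\xm$ weakly-$*$. By Proposition~\ref{existenceweaksol} let $u_n\in W^{s,p}_0(\xO)$ be the unique weak solution of \eqref{weaksol} with datum $\xm_n$. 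The a priori estimates already proved apply uniformly in $n$: Proposition~\ref{weaklp} gives $\norm{|u_n|^{p-1}}^{*}_{L^{\frac{N}{N-sp}}_w(\BBR^N)}\le C|\xm|(\xO)$; inequality \eqref{trunc} gives $\int_{\BBR^N}\int_{\BBR^N}|T_k(u_n)(x)-T_k(u_n)(y)|^p|x-y|^{-N-sp}\,{\dd} x\,{\dd} y\le k\xL_K|\xm|(\xO)$ for all $k>0$; estimate \eqref{est5} (with the free parameter fixed, say $d=1$) gives a uniform weighted-energy bound; and Proposition~\ref{lpest} gives $\norm{u_n}_{W^{h,q}(\BBR^N)}\le C|\xm|(\xO)^{\frac1{p-1}}$ for all $0<h<s$ and $0<q<\frac{N(p-1)}{N-s}$.

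Next comes compactness. For each fixed $k>0$, \eqref{trunc} makes $\{T_k(u_n)\}_n$ bounded in $W^{s,p}_0(\xO)$, hence precompact in $L^p(\xO)$ by the fractional Rellich--Kondrachov theorem (valid because $p>1$, irrespective of whether $\frac{N(p-1)}{N-s}\ge1$). At the same time \eqref{ue} together with Proposition~\ref{weaklp} yields $|\{|u_n|>k\}|\le Ck^{-\frac{N(p-1)}{N-sp}}$ uniformly in $n$. Combining these two facts through a diagonal extraction, one checks that $\{u_n\}$ is a Cauchy sequence in measure on $\xO$; passing to a subsequence (not relabeled), $u_n\to u$ a.e.\ in $\BBR^N$ for some measurable $u$ with $u=0$ a.e.\ in $\BBR^N\setminus\xO$, and hence $u_n(x)-u_n(y)\to u(x)-u(y)$ for a.e.\ $(x,y)\in\BBR^N\times\BBR^N$.

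The heart of the argument is the passage to the limit in identity (iii) of Definition~\ref{definsolu}. Fix $\varphi\in C_0^\infty(\xO)$; since $|\varphi(x)-\varphi(y)|\le C\min(1,|x-y|)$ and $\varphi(x)-\varphi(y)=0$ unless $x$ or $y$ lies in $\supp\varphi$, the measure ${\dd}\nu:=K(x,y)|\varphi(x)-\varphi(y)|\,{\dd} x\,{\dd} y$ is finite on $\BBR^N\times\BBR^N$. The key point is that $\{|u_n(x)-u_n(y)|^{p-1}\}_n$ is uniformly $\nu$-integrable, and this is the step I expect to be the main obstacle. One proves it by choosing $\xi\in(1,\frac{N}{N-sp})$, writing
\[
|u_n(x)-u_n(y)|^{p-1}K(x,y)|\varphi(x)-\varphi(y)|=\left(\frac{|u_n(x)-u_n(y)|^{p}\,K(x,y)}{(1+|u_n(x)|+|u_n(y)|)^{\xi}}\right)^{\!\frac{p-1}{p}}\!\left(K(x,y)\,(1+|u_n(x)|+|u_n(y)|)^{\xi(p-1)}|\varphi(x)-\varphi(y)|^{p}\right)^{\!\frac1p},
\]
and applying H\"older's inequality with exponents $\frac{p}{p-1}$ and $p$: the first factor is controlled uniformly in $n$ by \eqref{est5}, while the second, using $|\varphi(x)-\varphi(y)|^{p}\le C\min(1,|x-y|^{p})$ and the uniform $L^1(\xO)$ bound on $|u_n|^{r(p-1)}$ for an $r\in(\xi,\frac{N}{N-sp})$ (a consequence of Proposition~\ref{weaklp}), tends to $0$ as $\nu(A)\to0$ uniformly in $n$. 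Granting this, $|u_n(x)-u_n(y)|^{p-2}(u_n(x)-u_n(y))\,\sign(\varphi(x)-\varphi(y))$ converges $\nu$-a.e.\ and is dominated by the uniformly $\nu$-integrable family $|u_n(x)-u_n(y)|^{p-1}$, so Vitali's convergence theorem passes to the limit in $\int_{\BBR^N}\int_{\BBR^N}|u_n(x)-u_n(y)|^{p-2}(u_n(x)-u_n(y))(\varphi(x)-\varphi(y))K(x,y)\,{\dd} x\,{\dd} y$; combined with $\int_\xO\varphi\,\xm_n\,{\dd} x\to\int_\xO\varphi\,{\dd}\xm$, this gives (iii) with $\tilde g\equiv0$.

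Finally I would verify the remaining items. Property (ii) holds because $\{T_k(u_n)\}_n$ is bounded in $W^{s,p}_0(\xO)$ and converges a.e.\ to $T_k(u)$, so $T_k(u)\in W^{s,p}_0(\xO)$ and \eqref{estn2} follows by weak lower semicontinuity of the Gagliardo seminorm. Property (i) and the bound \eqref{est2} follow from Fatou's lemma applied to \eqref{est1} (using $u_n(x)-u_n(y)\to u(x)-u(y)$ a.e.\ and the uniform $L^q(\xO)$ bound coming from Proposition~\ref{weaklp}); and \eqref{estn1} follows from Fatou applied to the distribution functions, $\gl_{|u|^{p-1}}(a)\le\liminf_n\gl_{|u_n|^{p-1}}(a)$, together with Proposition~\ref{weaklp}. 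Since $\tilde g\equiv0$ the requirement $\tilde g(u)\in L^1_{\loc}(\xO)$ is vacuous, so $u$ is the desired very weak solution. To summarize, the only genuinely delicate point is the uniform $\nu$-integrability of the nonlinear flux near the diagonal; everything else is compactness plus lower semicontinuity.
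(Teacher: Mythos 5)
Your proposal is correct and follows essentially the same route as the paper: mollify $\xm$, invoke Propositions~\ref{existenceweaksol}, \ref{weaklp}, \ref{lpest} and inequality \eqref{trunc} for the uniform a priori bounds on the approximants $u_n$, extract an a.e.\ convergent subsequence, and pass to the limit in the weak formulation. The paper outsources the convergence step to steps~2--3 of \cite[Theorem~3.4]{MMOP}; you instead make it self-contained by a Vitali-type argument based on the weighted energy estimate \eqref{est5}, which is exactly the mechanism used in \cite{KMS} that the paper also relies on elsewhere, so the underlying ideas coincide. One small point worth tightening in your uniform $\nu$-integrability step: after the first H\"older split the second factor carries the density $K(x,y)|\varphi(x)-\varphi(y)|^{p}$ rather than the density of $\nu$; you should record explicitly that $|\varphi(x)-\varphi(y)|^{p}\le(2\norm{\varphi}_\infty)^{p-1}|\varphi(x)-\varphi(y)|$, so that this auxiliary measure is dominated by a constant multiple of $\nu$, before concluding that the factor $\left(\int_A K(1+|u_n(x)|+|u_n(y)|)^{r(p-1)}|\varphi(x)-\varphi(y)|^{p}\right)^{1-\xi/r}$ (after the second H\"older with exponents $r/\xi$ and its conjugate) tends to zero uniformly as $\nu(A)\to 0$.
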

\begin{proof}
Let $\{\xr_n\}_n$ be a sequence of mollifiers and $\xm_n=\xr_n*\xm.$ Then $\xm_n\in C_0^\infty(\BBR^N)$  and $\xm_n\rightharpoonup \xm$ weakly in $\mathfrak{\BBR}^N.$ We denote by $u_n$ the weak solution of \eqref{weaksol} with $\xm=\xm_n.$ 

Let $q\in (p-1,\frac{N(p-1)}{N-s})$ and $h\in (0,s).$ Then by \eqref{est7}, \eqref{trunc} and \eqref{est1}, there exist positive constants $C_1$ and $C_2$ such that

\ba\label{n1}
\norm{|u_n|^{p-1}}^{*}_{L^{\frac{N}{N-sp}}_w(\BBR^N)}\leq C_1(N,p,s,\xL_K)\xm(\xO)\qquad \forall n\in\BBN,
\ea

\ba\label{n2}
\int_{\BBR^N}\int_{\BBR^N}\frac{|T_k(u_n)(x)-T_k(u_n)(y)|^p}{|x-y|^{N+sp}}{\dd} x{\dd} y\leq k \xL_K\xm(\xO) \qquad\forall k>0\;\;\text{and}\;\; n\in\BBN
\ea
and

\ba\label{n3}\BAL
\left(\int_{\BBR^N}\int_{\BBR^N}\frac{|u_n(x)-u_n(y)|^q}{|x-y|^{N+hq}}{\dd} x{\dd} y\right)^\frac{1}{q}\leq C_2(N,p,s,\xL_K, q,h)\xm(\xO)^\frac{1}{p-1}\qquad\forall n\in\BBN.
\EAL
\ea
As in steps 2-3 of the proof of \cite[Theorem 3.4]{MMOP} (see also \cite[Step 2-3 pg 275]{Vbook}), by \eqref{n1}-\eqref{n3} and the fractional Sobolev embedding theorem (see e.g. [14, Corollary 7.2]), we may deduce the existence of a subsequence (still denoted by $\{u_n\}$) and a function $u\in W^{h,q}(\BBR^N)$ satisfying the following properties:

(i) $u_n\to u$ a.e. in $\BBR^N,$ $u=0$ a.e. in $\BBR^N\setminus\xO$ and $\norm{u-u_n}_{W^{h,q}(\BBR^N)}\to 0.$

(ii) $T_k(u)\in W_0^{s,p}(\xO)$ for any $k>0.$

(iii) $u\in W^{\xs,r}(\BBR^N)$ for any $0<r<\frac{N(p-1)}{N-s}$ and for any $0<\xs<s.$ 

\noindent Combining all above we obtain the desired result.
\end{proof}

In the next theorem, we establish a priori pointwise estimates for nonnegative very weak solutions of problem \eqref{weaksol} with $\xm\in \mathfrak{M}_b^+(\xO).$

\begin{proposition}\label{nwolffest}
Let $\xm\in \mathfrak{M}_b^+(\xO)$ and $u$ be a nonnegative very weak solution of \eqref{weaksol}. Then there exists a positive constant $C$ depending only on $N,s,p,\xL_K$ such that
\ba\label{nest3}\BAL
& C^{-1}W_{s,p}^{\frac{d(x)}{8}}[\xm](x)\leq u(x)\\
&\;\;\leq C\bigg(\essinf_{B_{\frac{d(x)}{4}}(x)} u +W_{s,p}^{\frac{d(x)}{2}}[\xm](x)+ \bigg(\bigg(\frac{d(x)}{4}\bigg)^{sp}\int_{\BBR^N\setminus B_{\frac{d(x)}{4}}(x)}\frac{u(y)^{p-1}}{|x-y|^{N+sp}}{\dd} y\bigg)^\frac{1}{p-1}\bigg)
\EAL
\ea
for a.e. $x\in \xO.$
\end{proposition}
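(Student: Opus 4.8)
The plan is to reduce each of the two one-sided bounds in \eqref{nest3} to an already available nonlocal Wolff potential estimate for $L$-supersolutions, after first placing the very weak solution in the right function class. The first step I would carry out is to argue that a nonnegative very weak solution $u$ of \eqref{weaksol} with $\xm\in\mathfrak{M}_b^+(\xO)$ is, after modification on a Lebesgue-null set, an $L$-superharmonic function in $\xO$ with associated Riesz measure $\xm$. Since $\xm\ge 0$, testing the equation against nonnegative $\xf\in C_0^\infty(\xO)$ gives $\langle Lu,\xf\rangle=\int_\xO\xf\,\dd\xm\ge0$, so $u$ is an $L$-supersolution in the distributional sense, and the truncations $T_k(u)=\min(u,k)\in W_0^{s,p}(\xO)$ are genuine bounded weak supersolutions; letting $k\to\infty$ and invoking the potential theory for the fractional $p$-Laplacian (\cite{diCKP_local,KKP,KLL}) identifies $u$ with an element of the superharmonic class and its Riesz measure with $\xm$. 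An equivalent route, which I would actually prefer for bookkeeping, is to run everything on the approximating sequence $\{u_n\}$ of Proposition \ref{sola}: by Proposition \ref{comparison} these are nonnegative weak supersolutions with smooth data $\xm_n=\xr_n*\xm\ge0$, so \eqref{nest3} can be proved for each $u_n$ with $\xm_n$ in place of $\xm$, and then I would pass to the limit using $u_n\to u$ a.e., the uniform bound \eqref{estn1}, and the convergence $W_{s,p}^R[\xm_n](x)\to W_{s,p}^R[\xm](x)$ for a.e. $x$ (which follows from $\xm_n(B_r(x))\le\xm(B_{2r}(x))$ for $n$ large and dominated convergence in the $r$-integral).

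With this in hand, I would fix $x\in\xO$ so that $B_{d(x)/2}(x)\subset\xO$. For the lower bound I would apply the lower Wolff potential estimate for $L$-supersolutions from \cite[estimate (1.25)]{KMS} (see also \cite{KLL}) on the ball $B_{d(x)/4}(x)\subset\xO$, obtaining $u(x)\ge C^{-1}W_{s,p}^{d(x)/8}[\xm](x)$ for a.e. $x\in\xO$, with $C=C(N,s,p,\xL_K)$. For the upper bound I would apply the Kilpel\"ainen--Mal\'y type upper Wolff potential estimate of \cite[Theorem 5.3]{KLL} (compare \cite{KMS}) on $B_{d(x)/2}(x)$ with inner radius $d(x)/4$, which reads
\[
u(x)\le C\Big(\essinf_{B_{d(x)/4}(x)}u+W_{s,p}^{d(x)/2}[\xm](x)+\Big(\big(\tfrac{d(x)}{4}\big)^{sp}\int_{\BBR^N\setminus B_{d(x)/4}(x)}\frac{u_+(y)^{p-1}}{|x-y|^{N+sp}}\,\dd y\Big)^{\frac{1}{p-1}}\Big);
\]
since $u\ge0$ we have $u_+=u$, so this is precisely the right-hand side of \eqref{nest3}, and the scales $d(x)/8$, $d(x)/4$, $d(x)/2$ are dictated purely by keeping the relevant balls inside $\xO$ with enough slack that the constants depend only on $N,s,p,\xL_K$.

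The hard part will be the first step: making sure the very weak solution---whose only intrinsic regularity is $T_k(u)\in W_0^{s,p}(\xO)$ and $u\in W^{h,q}(\BBR^N)$, and which in general is not in $W^{s,p}(\BBR^N)$---genuinely lies in the class for which \cite{KMS,KLL} prove their pointwise estimates, and that its Riesz measure equals $\xm$ and not merely a measure agreeing with $\xm$ off a set of zero capacity. Running the argument through $\{u_n\}$ neutralizes this for the lower bound, where monotone convergence of $W_{s,p}^R[\xm_n]$ and a.e. convergence of $u_n$ are all that is needed. For the upper bound I would additionally have to pass the $\essinf$ term and the nonlocal tail to the limit; the tail is the delicate one, and there I would use the uniform weak-$L^{N/(N-sp)}$ bound \eqref{estn1} on $|u_n|^{p-1}$ together with a.e. convergence and Vitali's convergence theorem, while for the $\essinf$ term the already-established lower bound prevents loss of mass.
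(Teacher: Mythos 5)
Your route (a) is essentially the paper's argument, but you declare it ``equivalent'' to route (b) and then switch to (b); they are not equivalent, and (b) does not prove the proposition as stated. Proposition~\ref{nwolffest} is formulated for an \emph{arbitrary} nonnegative very weak solution $u$ of \eqref{weaksol}, whereas running the argument on the mollified sequence $\xm_n=\xr_n*\xm$ and the corresponding $u_n$ from Proposition~\ref{sola} only produces the estimate for the particular SOLA built from that approximation. Since uniqueness of very weak solutions is not available here, a given nonnegative very weak solution $u$ need not be the a.e.\ limit of $\{u_n\}$, and the passage ``$u_n\to u$ a.e.'' has no justification for the $u$ in the statement. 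The paper avoids this entirely by working with truncations of the given $u$ itself: $u_k=\min(u,k)\in W_0^{s,p}(\xO)$ is a bounded weak supersolution with its own Riesz measure $\xm_k$, one shows $\xm_k\rightharpoonup\xm$ using $\|u-u_k\|_{W^{h,q}}\to 0$, applies the lower Wolff estimate of \cite{KMS} to each $u_k$, and lets $k\to\infty$. For the upper bound the paper replaces $u_k$ by its lower semicontinuous representative $v_k$, invokes \cite{KKP} to conclude $v_k$ and then $v=\lim_k v_k$ are $(s,p)$-superharmonic with $v=u$ a.e., and applies \cite[Theorem 5.3]{KLL} to $v$. This is the substantive content you would need to develop in route (a).

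There is a second, more technical gap in route (b) for the upper bound: you assume $W_{s,p}^{d(x)/2}[\xm_n](x)$ can be compared with $W_{s,p}^{d(x)/2}[\xm](x)$ via ``$\xm_n(B_r(x))\le\xm(B_{2r}(x))$ for $n$ large and dominated convergence.'' That comparison only holds for $r\ge 1/n$; on $(0,1/n)$ the only crude bound is $\xm_n(B_r(x))\lesssim n^N r^N\xm(\xO)$, and the contribution of that range to the truncated Wolff integral is of order $n^{(N-sp)/(p-1)}$, which blows up, so there is no dominating integrand. One can salvage this (for a.e.\ $x$ where $W^{R}_{s,p}[\xm](x)<\infty$, the small-scale piece $W_{s,p}^{1/n}[\xm_n](x)$ can be bounded by $W_{s,p}^{c/n}[\xm](x)\to 0$), but this is a nontrivial step that the proposal does not acknowledge, and it only yields the bound with an enlarged outer radius. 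The paper's route, by applying \cite[Theorem 5.3]{KLL} directly to the $(s,p)$-superharmonic representative of $u$, sidesteps the mollified Wolff potential altogether. You should either flesh out route (a) along these lines or explicitly restrict route (b)'s conclusion to the SOLA from Proposition~\ref{sola}.
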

\begin{proof}
In view of the proof of \cite[Lemma 7]{KKP}, we have that $u_k=\min(u,k)$ is a nonnegative weak supersolution. Hence, there exists a nonnegative Radon measure $\xm_k\in \mathfrak{M}^+(\xO)$ such that 

\ba\label{n4}
\int_{\BBR^N}\int_{\BBR^N}|u_k(x)-u_k(y)|^{p-2}(u_k(x)-u_k(y))(\xf(x)-\xf(y))K(x,y){\dd} x{\dd} y=\int_\xO \xf(x){\dd} \xm_k
\ea
for any $\xf\in C_0^\infty(\xO).$ Since $u_k\to u$ in $\BBR^N,$ we have that $\norm{u-u_k}_{W^{h,q}(\BBR^N)}\to 0$ for any $h\in (0,s)$ and $q\in(0,\frac{N(p-1)}{N-s}).$ This, together with \eqref{n4}, implies 

\ba
\int_\xO \xf(x){\dd} \xm_k\to \int_\xO \xf(x){\dd} \xm\qquad\forall \xf\in C_0^\infty(\xO).
\ea 
Now, we remark that, in view of the proof of \cite[Theorem 1.3]{KMS}, we may apply \cite[estimate (1.25)]{KMS} to $u_k.$ Hence, 
\bal
C^{-1}W_{s,p}^{\frac{d(x)}{8}}[\xm_k](x)\leq u_k(x)\qquad \text{for a.e.}\;x\in\xO\;\;\text{and}\;\;\forall k>0.
\eal
Letting $k\to\infty$ in the above inequality and using some elementary manipulations, we may obtain the lower estimate in \eqref{nest3}.

For the upper estimate in \eqref{nest3}, by \cite[Theorem 9]{KKP}, we have that 
\bal
v_k(x):=\essliminf_{y\to x}u_k(y)=u_k(x)\qquad \text{for a.e.} \;x\in\BBR^N.
\eal
Hence, $v_k$ is a lower semicontinuous functions in $\xO$ and a nonnegative weak supersolution. By \cite[Theorem 12]{KKP}, $v_k$ is $(s,p)-$superharmonic function in $\xO$ (see \cite[Definition 1]{KKP} for the definition of $(s,p)-$superharmonic function). This, together with \cite[Lemma 12]{KKP}, implies that $v:=\lim_{k\to\infty} v_k$ is $(s,p)-$superharmonic function in $\xO$ and $v=u$  a.e. in $\BBR^N.$ The desired result follows by applying \cite[Theorem 5.3]{KLL} to $v$ and the fact that $v=u$ a.e. in $\BBR^N.$
\end{proof}

\begin{proposition}\label{wolffest}
Let $\xm\in \mathfrak{M}_b(\xO).$ Then there exists a very weak solution $u$ of \eqref{weaksol} and a positive constant $C$ depending only on $N,s,p$  and $\xL_K$ such that
\ba\label{est3}
-C W_{s,p}^{2\diam(\xO)}[\xm^-]\leq u\leq CW_{s,p}^{2\diam(\xO)}[\xm^+]\qquad \text{a.e. in}\;\;\xO.
\ea
\end{proposition}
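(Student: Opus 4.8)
The plan is to prove Proposition~\ref{wolffest} by a Jordan-decomposition argument combined with the construction of Proposition~\ref{sola} and the one-sided pointwise bounds of Proposition~\ref{nwolffest}. First I would write $\xm=\xm^+-\xm^-$ with $\xm^\pm\in\mathfrak{M}_b^+(\xO)$, and mollify: set $\xm_n^\pm=\xr_n*\xm^\pm$ so that $\xm_n^\pm\in C_0^\infty(\BBR^N)$, $\xm_n^\pm\geq0$, $\xm_n^\pm\rightharpoonup\xm^\pm$ weakly, and $\xm_n^\pm(\BBR^N)\leq\xm^\pm(\xO)$. Let $u_n$ be the unique weak solution of \eqref{weaksol} with datum $\xm_n:=\xm_n^+-\xm_n^-$ given by Proposition~\ref{existenceweaksol}, and let $w_n^\pm\in W_0^{s,p}(\xO)$ be the unique weak solutions with data $\xm_n^\pm$. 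Since $\xm_n^\pm\geq0$, the comparison principle (Proposition~\ref{comparison}) gives $w_n^\pm\geq0$ a.e. in $\BBR^N$, and also $-w_n^-\leq u_n\leq w_n^+$ a.e.\ in $\BBR^N$: indeed $w_n^+$ is a weak supersolution of $L$ in $\xO$ (its datum $\xm_n^+\geq\xm_n$ as distributions) while $u_n$ is the solution with the smaller datum $\xm_n$, so Proposition~\ref{comparison} applied to $(u_n-w_n^+)_+\in W_0^{s,p}(\xO)$ yields $u_n\leq w_n^+$; symmetrically $-w_n^-\leq u_n$.

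Next I would pass to the limit. By the a priori estimates \eqref{est7}, \eqref{trunc}, \eqref{est1} applied to $u_n$ (with total mass $\leq|\xm|(\xO)$) and the compactness argument used in the proof of Proposition~\ref{sola}, a subsequence of $\{u_n\}$ converges a.e.\ and in $W^{h,q}(\BBR^N)$ to a very weak solution $u$ of \eqref{weaksol}; passing to a further subsequence we may assume $w_n^\pm\to w^\pm$ a.e.\ in $\BBR^N$, where $w^\pm$ are very weak solutions of \eqref{weaksol} with data $\xm^\pm$ (this is exactly Proposition~\ref{sola} applied to $\xm^\pm$, and the limits are nonnegative as a.e.\ limits of the nonnegative $w_n^\pm$). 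Taking $n\to\infty$ in $-w_n^-\leq u_n\leq w_n^+$ preserves the inequality a.e., so
\ba\label{eq:sandwich}
-w^-\leq u\leq w^+\qquad\text{a.e. in }\BBR^N.
\ea
It remains to bound $w^\pm$ from above by the Wolff potentials. Since $w^\pm$ is a nonnegative very weak solution of \eqref{weaksol} with datum $\xm^\pm\in\mathfrak{M}_b^+(\xO)$, Proposition~\ref{nwolffest} gives
\bal
w^\pm(x)\leq C\bigg(\essinf_{B_{d(x)/4}(x)}w^\pm+W_{s,p}^{d(x)/2}[\xm^\pm](x)+\Big(\big(\tfrac{d(x)}{4}\big)^{sp}\int_{\BBR^N\setminus B_{d(x)/4}(x)}\frac{w^\pm(y)^{p-1}}{|x-y|^{N+sp}}\,{\dd} y\Big)^{\frac1{p-1}}\bigg)
\eal
for a.e.\ $x\in\xO$, and each of the three terms on the right is controlled by $C\,W_{s,p}^{2\diam(\xO)}[\xm^\pm]$: the second term trivially since $d(x)/2\leq 2\diam(\xO)$; the first term because $w^\pm\equiv0$ outside $\xO$ forces the essential infimum over $B_{d(x)/4}(x)$ to be estimated by its value at a comparable scale, or more directly because the global weak-Lebesgue bound \eqref{est7} together with the standard comparison between solutions and Wolff potentials (as in \cite[Theorem~1.3]{KMS}, \cite[Theorem~5.3]{KLL}) already yields $w^\pm\leq C\,W_{s,p}^{2\diam(\xO)}[\xm^\pm]$ pointwise a.e.; the tail term likewise reduces to a Wolff potential by the arguments of \cite{KLL,KMS} since $w^\pm$ vanishes off $\xO$ and $\xO$ has finite diameter. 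Combining with \eqref{eq:sandwich} gives \eqref{est3} with a constant depending only on $N,s,p,\xL_K$.

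The main obstacle is the upper bound on $w^\pm$ in terms of the \emph{non-truncated-radius-at-$2\diam(\xO)$} Wolff potential $W_{s,p}^{2\diam(\xO)}[\xm^\pm]$, i.e.\ converting the local pointwise estimate of Proposition~\ref{nwolffest} (which involves $\essinf$ over a small ball and a nonlocal tail) into a clean global bound. The key points making this work are that $u$ (hence each $w^\pm$) vanishes identically on $\BBR^N\setminus\xO$ and that $\diam(\xO)<\infty$, so balls $B_{d(x)/4}(x)$ always touch the region where the solution is comparable to a Wolff potential at scale $\sim\diam(\xO)$, and the tail integral over $\BBR^N\setminus B_{d(x)/4}(x)$ is effectively an integral over $\xO$ which is absorbed by the global Marcinkiewicz bound \eqref{est7}; this is precisely the mechanism already recorded in the discussion following \eqref{weakest} in the introduction, attributing the upper estimate to \cite[Theorem~5.3]{KLL} together with \eqref{weakest}. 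Once that global upper bound for nonnegative data is in hand, the Jordan decomposition and comparison principle do the rest with no further difficulty.
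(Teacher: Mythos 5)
Your overall plan (Jordan decomposition $\xm=\xm^+-\xm^-$, mollify, sandwich $u_n$ between solutions with nonnegative data via Proposition~\ref{comparison}, pass to the limit, then invoke a pointwise Wolff bound for the nonnegative majorant/minorant) is the same skeleton as the paper's proof, and the first two thirds of your argument are sound. The gap is in the last step: you bound the nonnegative solution $w^\pm$ of \eqref{weaksol} \emph{on $\xO$} by $W_{s,p}^{2\diam(\xO)}[\xm^\pm]$ by appealing to Proposition~\ref{nwolffest} and then claiming the three terms on the right (in particular $\essinf_{B_{d(x)/4}(x)}w^\pm$ and the tail at scale $d(x)/4$) are controlled. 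But for $x$ near $\partial\xO$ the radius $d(x)/4$ is arbitrarily small, and the Marcinkiewicz estimate \eqref{est7} on a tiny ball only yields a bound of order $d(x)^{-(N-sp)/(p-1)}\,\xm^\pm(\xO)^{1/(p-1)}$, which is \emph{not} dominated by $W_{s,p}^{2\diam(\xO)}[\xm^\pm](x)$ in general (e.g.\ when $\xm^\pm$ puts little mass near $x$). Your alternative remark that ``the global weak-Lebesgue bound together with the standard comparison $\ldots$ already yields $w^\pm\leq C\,W_{s,p}^{2\diam(\xO)}[\xm^\pm]$'' is exactly the assertion being proved, so it is circular; the sentence from the introduction you cite is a summary of the result, not an independent argument.

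The paper circumvents precisely this boundary issue with a domain-enlargement trick you are missing: instead of taking $w_n^+$ to solve on $\xO$, it takes $v_n^\oplus$ to solve $Lv_n^\oplus=\xm_n^\oplus$ on the large ball $B_{2R}(x_0)$ with $R=\diam(\xO)$, $x_0\in\xO$, and $v_n^\oplus=0$ outside $B_{2R}(x_0)$. By comparison $u_n\leq v_n^\oplus$ still holds, but now, for every $x\in\xO$, the distance to $\partial B_{2R}(x_0)$ is at least $R$, so the scales appearing in Proposition~\ref{nwolffest} (the $\essinf$ ball and the tail radius) can be taken to be $\sim R$, uniformly in $x$. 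Averaging over a ball of radius $\sim R$ and using \eqref{estn1}, \eqref{equinorm} then yields $\essinf_{B_{R/2}(x)}v^\oplus\lesssim R^{-(N-sp)/(p-1)}\xm^+(B_R(x))^{1/(p-1)}\lesssim W_{s,p}^{2R}[\xm^+](x)$, and similarly for the tail (which is supported in $B_{2R}(x_0)$), with no small-$d(x)$ blow-up. If you replace your $w_n^\pm$ (solutions on $\xO$) by these solutions on $B_{2R}(x_0)$, keeping the rest of your sandwich-and-limit argument, the proof closes.
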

\begin{proof}
 Let $u$ be the solution constructed in Proposition \ref{sola} and $x_0\in\xO.$ Set $R=\diam(\xO),$ $\xm_n=\xr_n*\xm$ and $\xm_n^\oplus=\xr_n*\xm^+.$ We denote by  $v_n^{\oplus}\in W^{s,p}_0(\xO)$ the solution of
\bal\left\{\BAL
Lv_n^\oplus&=\xm_n^\oplus&&\quad\text{in}\;B_{2R}(x_0)\\
v_n^\oplus&=0&&\quad\text{in}\;\BBR^N\setminus B_{2R}(x_0).
\EAL\right.
\eal

By Proposition \ref{comparison}, we have that $v_n^{\oplus}\geq 0$ and $v_n^{\oplus}\geq u_n,$
where $u_n\in W^{s,p}_0(\xO)$ is the weak solution of \eqref{weaksol} with $\xm=\xm_n.$  By statements (i)-(iii) in the proof of Proposition \ref{sola}, there exist subsequences $\{u_{n_k},v_{n_k}^{\oplus}\}_{k=1}^\infty$  such that $u_{n_k}\to u$ and $v_{n_k}^\oplus\to v^\oplus$ a.e. in $\BBR^N$ and 
\bal
\norm{u-u_{n_k}}_{W^{h,q}(\BBR^N)} +\norm{v^{\oplus}-v_{n_k}^{\oplus}}_{W^{h,q}(\BBR^N)}\to 0
\eal
for any $h\in (0,s)$ and $q\in(0,\frac{N(p-1)}{N-s}).$ Combining all above, we may deduce that $u\leq v^\oplus$ a.e. in $\BBR^N$ and $v^\oplus$ is a nonnegative very weak solution to

\bal\left\{\BAL
Lv^\oplus&=\xm^+&&\quad\text{in}\;B_{2R}(x_0)\\
v^\oplus&=0&&\quad\text{in}\;\BBR^N\setminus B_{2R}(x_0).
\EAL\right.
\eal
In addition, by Proposition \ref{wolffest}, there exists a positive constant $C=C(p,s,\xL_K,N)$ such that
\ba\label{6}
u(x)\leq v^\oplus(x)\leq C\bigg(W_{s,p}^{R}[\xm^+](x)+\essinf_{B_\frac{R}{2}(x)} v^\oplus
+\mathrm{Tail}{(v^\oplus;x,\frac{R}{2})}\bigg)\;\;\text{for a.e.}\; x\in\xO,
\ea
where

\bal
\mathrm{Tail}{(v^\oplus;x,\frac{R}{2})}=\bigg(\bigg(\frac{R}{2}\bigg)^{sp}\int_{\BBR^N\setminus B_{\frac{R}{2}}(x)}\frac{|v^\oplus(y)|^{p-1}}{|x-y|^{N+sp}}{\dd} y\bigg)^\frac{1}{p-1}.
\eal
By \eqref{estn1} and \eqref{equinorm}, we derive that

\ba\label{7}
\essinf_{B_\frac{R}{2}(x)} v^\oplus\lesssim \bigg(\dashint_{B_{\frac{R}{2}}(x)}|v^\oplus|^{p-1}{\dd} x\bigg)^\frac{1}{p-1}\lesssim R^{-\frac{N-sp}{p-1}}\xm^+(B_R(x))^\frac{1}{p-1}\lesssim W_{s,p}^{2R}[\xm^+](x),
\ea
and
\ba\BAL\label{8}
\mathrm{Tail}{(v^\oplus;x_0,\frac{R}{2})}\lesssim\bigg(\dashint_{B_{2R}(x)}|v^\oplus|^{p-1}{\dd} x\bigg)^\frac{1}{p-1}
\lesssim W_{s,p}^{2R}[\xm^+](x),
\EAL
\ea
where the implicit constants in \eqref{7} and \eqref{8} depend only on $p,s,\xL_K,N.$ Combining \eqref{6}-\eqref{8}, we obtain the upper bound in \eqref{est3}.

The proof of the lower bound in \eqref{est3} is similar and we omit it.
\end{proof}

\section{Nonlocal equations with absorption nonlinearities}\label{secabs}

\subsection{The variational problem}

We assume that $g\in C(\BBR)$ and $rg(r)\geq0.$ Let $\xO\subset\BBR^N$ be an open bounded domain and $\xm\in (W^{s,p}_0(\xO))^*.$ Set $G(r)=\int_0^rg(s){\dd} s,$
\bal
J(v)=\frac{1}{p}\int_{\BBR^N}\int_{\BBR^N}|v(x)-v(y)|^pK(x,y){\dd} x{\dd} y+ \int_{\xO}G(v){\dd} x-<\xm,v>
\eal
and

\bal
\mathbf{X}_G(\xO)=\{v\in W^{s,p}_0(\xO):\;G(v)\in L^1(\xO)\}.
\eal

\begin{theorem}\label{var}
Let $s\in(0,1),$ $1<p<\frac{N}{s}$ and $\xm\in (W^{s,p}_0(\xO))^*.$ Then, there exists a minimizer $u_\xm$ of $J$ in $\mathbf{X}_G(\xO).$ Furthermore, $u_\xm$ is a weak solution of $J,$ in the sense of

\ba\label{weak}
\int_{\BBR^N}\int_{\BBR^N}|u_\xm(x)-u_\xm(y)|^{p-2}(u_\xm(x)-u_\xm(y))(\xz(x)-\xz(y))K(x,y){\dd} x{\dd} y+ \int_\xO g(u_\xm)\xz {\dd} x=<\xm,\xz>
\ea
for any $\xz\in W^{s,p}_0(\xO)\cap L^\infty(\xO).$

If $g$ is nondecreasing the solution $u_\xm$ is unique and the mapping $\xm\mapsto u_\xm$ is nondecreasing.
\end{theorem}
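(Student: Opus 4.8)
The plan is to produce $u_\xm$ by the direct method of the calculus of variations, then to derive \eqref{weak} by computing the first variation of $J$ along suitably truncated perturbations, and finally to deduce uniqueness and monotonicity from the monotonicity of $g$ together with the standard monotonicity inequality for the operator. Throughout, write $\mathcal{E}(v):=\int_{\BBR^N}\int_{\BBR^N}|v(x)-v(y)|^pK(x,y)\,{\dd}x\,{\dd}y$, so that $J(v)=\tfrac1p\mathcal{E}(v)+\int_\xO G(v)\,{\dd}x-\langle\xm,v\rangle$.

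\emph{Existence.} The ellipticity bounds on $K$ and the fractional Poincar\'e inequality on the bounded set $\xO$ give $\mathcal{E}(v)\gtrsim\|v\|_{W^{s,p}_0(\xO)}^p$; combined with $G\geq0$ (a consequence of $rg(r)\geq0$) and $|\langle\xm,v\rangle|\leq\|\xm\|_{(W^{s,p}_0(\xO))^*}\|v\|_{W^{s,p}_0(\xO)}$, this makes $J$ coercive and bounded below on $W^{s,p}_0(\xO)\supseteq\mathbf{X}_G(\xO)$, and $\mathbf{X}_G(\xO)\neq\emptyset$ since $0\in\mathbf{X}_G(\xO)$. Taking a minimizing sequence $\{v_n\}\subset\mathbf{X}_G(\xO)$, which is bounded in $W^{s,p}_0(\xO)$, I pass to a subsequence with $v_n\rightharpoonup u_\xm$ weakly in $W^{s,p}_0(\xO)$ and, by the compact fractional Sobolev embedding, $v_n\to u_\xm$ in $L^p(\xO)$ and a.e.\ in $\xO$. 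Then $v\mapsto\mathcal{E}(v)$ is convex and strongly continuous hence weakly lower semicontinuous, $v\mapsto-\langle\xm,v\rangle$ is weakly continuous, and Fatou's lemma (using $G\geq0$, continuity of $G$, a.e.\ convergence) gives $\liminf_n\int_\xO G(v_n)\geq\int_\xO G(u_\xm)$. Hence $J(u_\xm)\leq\liminf_nJ(v_n)=\inf J$, and since $\int_\xO G(v_n)$ stays bounded we also get $\int_\xO G(u_\xm)<\infty$, so $u_\xm\in\mathbf{X}_G(\xO)$ is a minimizer.

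\emph{The equation.} First I claim $g(u_\xm)\in L^1(\xO)$. For $t\in(0,1)$ the function $(1-t)u_\xm$ lies in $\mathbf{X}_G(\xO)$ (it has the same sign as $u_\xm$ with $|(1-t)u_\xm|\leq|u_\xm|$, and $G$ is nondecreasing on $[0,\infty)$, nonincreasing on $(-\infty,0]$), so $J((1-t)u_\xm)\geq J(u_\xm)$. Using $\mathcal{E}((1-t)u_\xm)=(1-t)^p\mathcal{E}(u_\xm)$ and dividing by $t$,
\[
\int_\xO u_\xm(x)\int_0^1 g\big((1-rt)u_\xm(x)\big)\,{\dd}r\,{\dd}x=\frac1t\int_\xO\big(G(u_\xm)-G((1-t)u_\xm)\big)\,{\dd}x\leq\frac{(1-t)^p-1}{pt}\,\mathcal{E}(u_\xm)+\langle\xm,u_\xm\rangle,
\]
and the right-hand side stays below a finite constant as $t\to0^+$, while the integrand on the left is nonnegative by the sign condition and converges a.e.\ to $u_\xm g(u_\xm)$; Fatou then yields $u_\xm g(u_\xm)\in L^1(\xO)$, hence $g(u_\xm)\in L^1(\xO)$ (bound $|g(u_\xm)|$ by $|u_\xm g(u_\xm)|$ on $\{|u_\xm|\geq1\}$ and by $\max_{[-1,1]}|g|$ on $\{|u_\xm|<1\}$, recalling $|\xO|<\infty$). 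Next, for $\xz\in W^{s,p}_0(\xO)\cap L^\infty(\xO)$ and a cutoff $\psi\in C^\infty(\BBR)$ with $\psi\equiv1$ on $[-1,1]$, $\psi\equiv0$ off $[-2,2]$, $0\leq\psi\leq1$, set $\xz_n:=\xz\,\psi(u_\xm/n)\in W^{s,p}_0(\xO)\cap L^\infty(\xO)$; since $\xz_n$ vanishes where $|u_\xm|>2n$ and $|u_\xm+t\xz_n|\leq2n+\|\xz\|_{L^\infty}$ elsewhere (for $|t|\le1$), one has $u_\xm+t\xz_n\in\mathbf{X}_G(\xO)$, and differentiating $t\mapsto J(u_\xm+t\xz_n)$ at $t=0$ — the $\mathcal{E}$-part is $C^1$ by H\"older, the $G$-part handled by dominated convergence since its difference quotient is bounded by $\|\xz\|_{L^\infty}\max_{|r|\leq 2n+\|\xz\|_{L^\infty}}|g(r)|\cdot\mathbf{1}_{\{|u_\xm|\leq2n\}}\in L^1(\xO)$ — gives \eqref{weak} with $\xz$ replaced by $\xz_n$. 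Letting $n\to\infty$: $\xz_n\to\xz$ in $W^{s,p}_0(\xO)$ (split $\xz\,\psi(u_\xm/n)-\xz$ and use $[\psi(u_\xm/n)]_{W^{s,p}}\lesssim n^{-1}[u_\xm]_{W^{s,p}}\to0$ with dominated convergence in the remaining Gagliardo integral), which passes to the limit in the operator term (H\"older) and in $\langle\xm,\xz_n\rangle$, while $g(u_\xm)\xz_n\to g(u_\xm)\xz$ in $L^1(\xO)$ by dominated convergence with majorant $\|\xz\|_{L^\infty}|g(u_\xm)|\in L^1(\xO)$. This gives \eqref{weak} for every $\xz\in W^{s,p}_0(\xO)\cap L^\infty(\xO)$.

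\emph{Uniqueness and monotonicity.} Assume $g$ nondecreasing and $\xm_1\leq\xm_2$, with corresponding minimizers $u_i$. Subtracting the two identities \eqref{weak} and testing with $\xz=T_k\big((u_1-u_2)^+\big)\in W^{s,p}_0(\xO)\cap L^\infty(\xO)$: the operator contribution is nonnegative by the monotonicity inequality underlying the comparison principle (\cite[Lemma 6]{KKP}), $\int_\xO(g(u_1)-g(u_2))\,T_k((u_1-u_2)^+)\,{\dd}x\geq0$ because $g$ is nondecreasing, and $\langle\xm_1-\xm_2,T_k((u_1-u_2)^+)\rangle\leq0$; hence both nonnegative terms vanish, and letting $k\to\infty$ (licit since $g(u_i)\in L^1(\xO)$) forces the Gagliardo seminorm of $(u_1-u_2)^+$ to vanish, so $(u_1-u_2)^+=0$ in $W^{s,p}_0(\xO)$, i.e.\ $u_1\leq u_2$; taking $\xm_1=\xm_2$ in both orders gives uniqueness. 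The hard part will be the two integrability steps involving $g$ — establishing $g(u_\xm)\in L^1(\xO)$ and passing to the limit in the nonlinear term of the first variation — since $g$ is only continuous with no growth bound; it is precisely there that the sign condition $rg(r)\geq0$ is indispensable, through the $(1-t)u_\xm$ comparison and the truncation $\xz_n=\xz\,\psi(u_\xm/n)$, while the remainder is routine direct-method and compactness/monotonicity machinery.
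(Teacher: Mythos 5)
Your proof is correct, but it establishes the Euler--Lagrange equation \eqref{weak} by a genuinely different route than the paper's. The paper minimizes the truncated functional $J_k$ built from $g_k=\max(-k,\min(g,k))$, obtains $W^{s,p}_0$-solutions $v_k$ of the corresponding equation, derives the energy bound $\int_\xO g_k(v_k)v_k\le M$ from testing with $v_k$, and then uses that bound to prove uniform integrability of $\{g_k(v_k)\}$ so that Vitali's theorem lets one pass to the limit in the nonlinear term; a final Fatou argument shows the limit $v$ is still a minimizer of $J$ on $\mathbf{X}_G(\xO)$. You instead work directly with the minimizer $u_\xm$: you first prove $u_\xm g(u_\xm)\in L^1(\xO)$ by comparing $J(u_\xm)$ with $J((1-t)u_\xm)$, a perturbation that stays in $\mathbf{X}_G(\xO)$ precisely because of the sign condition, and then Fatou delivers $g(u_\xm)\in L^1$; you then compute the first variation along $\xz_n=\xz\,\psi(u_\xm/n)$, chosen so that $u_\xm+t\xz_n$ stays in $\mathbf{X}_G(\xO)$, and pass to the limit $n\to\infty$ using $\xz_n\to\xz$ in $W^{s,p}_0(\xO)$ and dominated convergence with majorant $\|\xz\|_\infty|g(u_\xm)|$. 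Your route is slightly more general in that it proves \emph{every} minimizer (not just the one built by approximation) satisfies \eqref{weak}, and it avoids both the Vitali argument and the secondary verification that the limit $v$ is a minimizer; the paper's route, on the other hand, simultaneously builds the approximating family $v_k$, which is re-used elsewhere in the article. For uniqueness you test the difference of the two equations with $T_k((u_1-u_2)^+)$ and invoke the monotonicity of $|t|^{p-2}t$ (the same inequality underlying \cite[Lemma 6]{KKP}); the paper instead deduces uniqueness of the minimizer from the strict convexity of $J$ when $g$ is nondecreasing, and reserves the truncated-test-function argument for the monotonicity of $\xm\mapsto u_\xm$. Both are standard and correct; your version gives uniqueness among all weak solutions of \eqref{weak}, a marginally stronger conclusion. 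One small imprecision: after concluding that the operator contribution vanishes, you say ``letting $k\to\infty$ forces the Gagliardo seminorm of $(u_1-u_2)^+$ to vanish'' — in fact for each fixed $k$ the vanishing already forces $T_k((u_1-u_2)^+)$ to be a.e.\ constant (hence zero, since it lies in $W^{s,p}_0(\xO)$), and the parenthetical about $g(u_i)\in L^1$ plays no role here; this is cosmetic and does not affect the argument.
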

\begin{proof}

We adapt the argument used in the proof of \cite[Theorem 5.1]{egw}.
Let $\{v_n\}$ be a minimizing sequence. Taking in to account that $G(t)\geq0$ for any $t\in \BBR$ and the fractional Sobolev inequality, we can easily show the existence of a positive constant  $C=C(p,\xO,\xL_K)$ such that
\ba\label{coe}
\norm{v_n}_{W_0^{s,p}(\xO)}^p\leq C(J(v_n)+\norm{\xm}_{(W_0^{s,p}(\xO))^*}^{p'})\qquad\forall n\in\BBN.
\ea
This implies that  $v_n$ is uniformly bounded in $W_0^{s,p}(\xO).$ Thus, by the fractional Sobolev embedding theorem (see e.g. \cite[Corollary 7.2]{hitch}) and the fact that $W_0^{s,p}(\xO)$ is a reflexive Banach space, we may prove the existence of a subsequence, still denoted by $\{v_{n}\}$ and a function  $v\in W^{s,p}_0(\xO)$ such that there hold:

\medskip

(i) $v_{n}\to v$ a.e. in $\BBR^N.$

(ii) $v_{n}\to v$ in $L^q(\xO)$ for any $q\in[1,\frac{N p}{N-sp}).$ 

(iii) $v_{n}\rightharpoonup v$ in $W_0^{s,p}(\xO)$ and $v_{n}\rightarrow v$ in $W^{h,q}(\BBR^N)$ for any $h\in(0,s)$ and $q\in(1,p).$ 

\medskip

\noindent By Fatou's Lemma, we obtain
\bal
&\frac{1}{p}\int_{\BBR^N}\int_{\BBR^N}|v(x)-v(y)|^pK(x,y){\dd} x{\dd} y+ \int_{\xO}G(v){\dd} x\\
&\qquad\qquad\leq\liminf_{k\to \infty}\frac{1}{p}\int_{\BBR^N}\int_{\BBR^N}|v_k(x)-v_k(y)|^pK(x,y){\dd} x{\dd} y+ \int_{\xO}G(v_k){\dd} x.
\eal
Hence $v$ is a minimizer. If $g$ is nondecreasing, the uniqueness of the minimizer follows by the fact that $J$ is strictly convex.

We next show \eqref{weak}. Let $v_k$ be the minimizer of $J$ associated with  {\small$g_k=\max(-k,\min(g,k)).$\normalsize} Then, in view of the proof of \cite[Theorem 2.3]{diCKP_local},  $v_k$ satisfies

\ba\label{weak2}
\int_{\BBR^N}\int_{\BBR^N}|v_k(x)-v_k(y)|^{p-2}(v_k(x)-v_k(y))(\xz(x)-\xz(y))K(x,y){\dd} x{\dd} y+ \int_\xO g_k(v_k)\xz {\dd} x=<\xm,\xz>
\ea
for any $\xz\in W^{s,p}_0(\xO).$ Taking $v_k$ as test function, we have

\bal
\int_{\BBR^N}\int_{\BBR^N}|v_k(x)-v_k(y)|^{p}K(x,y){\dd} x{\dd} y&+ \int_\xO g(v_k)v_k {\dd} x=<\xm, v_k>\\
&\leq \frac{1}{p}\norm{v_k}_{W_0^{s,p}(\xO)}^p+\frac{1}{p'}\norm{\xm}_{(W_0^{s,p}(\xO))^*}^{p'},
\eal
which implies

\ba\label{11}
\int_{\BBR^N}\int_{\BBR^N}\frac{|v_k(x)-v_k(y)|^{p}}{|x-y|^{N+sp}}{\dd} x{\dd} y+ \int_\xO g_k(v_k)v_k {\dd} x\leq C(\xL_K,p)\norm{\xm}_{(W_0^{s,p}(\xO))^*}^{p'}=: M.
\ea
By the above inequality, we may deduce  that there exists a subsequence, still denoted by $\{v_{k}\}$ and a function  $v\in W^{s,p}_0(\xO)$ such that they satisfy statements (i)-(iii).

Let $\xz\in L^\infty(\xO)$ with $\norm{\xz}_{L^\infty(\xO)}=N$ and $E\subset \xO$ be a Borel set. Then, for any $\xl>0,$ we have
\bal
\int_{E\cap\{|v_k|>\xl\}}|\xz g_k(v_k)|{\dd} x\leq\frac{1}{\xl} \int_{E\cap\{v_k>\xl\}}|\xz| |v_k g_k(v_k)|{\dd} x\leq \frac{N}{\xl}\int_{\xO} v_k g_k(v_k){\dd} x\leq \frac{MN}{\xl}.
\eal
Also,

\bal
\int_{E\cap\{|u_k|\leq\xl\}}|\xz g_k(v_k)|{\dd} x\leq |E|N\sup\{|g(t)|:|t|\leq \xl\}.
\eal
Let $\xe>0,$ $\xl=\frac{2MN}{\xe}$ and $\xd= \frac{\xe}{2N\sup\{|g(t)|:|t|\leq\frac{2MN}{\xe}\}+1}$. Then for any Borel set $E\subset\xO$ with
 $|E|<\xd$, we have
\bal
\int_{E}|\xz g_k(v_k)|{\dd} x<\xe.
\eal
Thus, by Vitali's theorem, we conclude
\ba\label{100}
\int_\xO g_{k}(v_{k})\xz {\dd} x\to \int_\xO g(v)\xz {\dd} x.
\ea
Combining all above, we obtain that $v$ satisfies \eqref{weak}.

Now for any $u\in X_G(\xO),$ we have that $u\in X_{G_{k}}(\xO),$ $G_{k}(u)\leq G(u)$ and
\bal
&\frac{1}{p}\int_{\BBR^N}\int_{\BBR^N}|v_{k}(x)-v_{k}(y)|^pK(x,y){\dd} x{\dd} y+ \int_{\xO}G_{k}(v_{k}){\dd} x-<\xm, v_{k}>\\
&\qquad\leq  \frac{1}{p}\int_{\BBR^N}\int_{\BBR^N}|u(x)-u(y)|^pK(x,y){\dd} x{\dd} y+ \int_{\xO}G_{k}(u){\dd}\xs-<\xm, u>,
\eal
where $G_{k}(r)=\int_0^rg_{k}(s){\dd} s.$ By the above inequality and Fatou's Lemma, we deduce that $v$ is a minimizer of $J$ in $\mathbf{X}_G(\xO).$

Let $g$ be nondecreasing and $u_\xn$ be the minimizer of $J$ associated with $\xn\in (W^{s,p}_0(\xO))^*,$ such that $\xn\leq \xm.$ Then, using $v_k=\min\{(u_\xn-u_\xm)_+,k\}$ as test function, we have that

\bal
&\int_{\BBR^N}\int_{\BBR^N}|u_\xn(x)-u_\xn(y)|^{p-2}(u_\xn(x)-u_\xn(y))(v_k(x)-v_k(y))K(x,y){\dd} x{\dd} y\\
&\qquad-\int_{\BBR^N}\int_{\BBR^N}|u_\xm(x)-u_\xm(y)|^{p-2}(u_\xm(x)-u_\xm(y))(v_k(x)-v_k(y))K(x,y){\dd} x{\dd} y\\
&\qquad\qquad=- \int_\xO (g(u_\xn)-g(u_\xm))v_k {\dd} x+<\xn-\xm,v_k>\leq0.
\eal
Letting $k\to\infty$ in the above inequality and then proceeding as in the proof of \cite[Lemma 6]{KKP}, we  obtain that $u_\xn\leq u_\xm$ a.e. in $\BBR^N.$
\end{proof}

When $\xm\in L^{p'}(\xO),$ we derive the following result which will  be useful in the next subsection.
\begin{lemma}\label{comp}
Let $\xm\in L^{p'}(\xO),$ $g\in C(\BBR^N)$ be a nondecreasing function with $g(0)=0$ and $u\in W^{s,p}_0(\xO)$  satisfy \eqref{weak}. Then there holds,
\ba\label{65}
\int_\xO |g(u)|{\dd} x\leq \int_{\xO}|\xm|{\dd} x.
\ea
In addition, if we assume that $\xm\geq0,$ then $u\geq0$ a.e. in $\BBR^N.$
\end{lemma}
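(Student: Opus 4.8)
The plan is to prove the two assertions of Lemma~\ref{comp} by approximating $g$ with the bounded truncations $g_k=\max(-k,\min(g,k))$ and exploiting the weak formulation \eqref{weak} with carefully chosen test functions. For the estimate \eqref{65}, I would first recall from the proof of Theorem~\ref{var} that the minimizer $v_k$ of the truncated functional satisfies \eqref{weak2}, and that (since $\xm\in L^{p'}(\xO)\subset (W^{s,p}_0(\xO))^*$) one actually has $v_k\to u$ strongly in $W^{s,p}_0(\xO)$ and a.e., with $g_k(v_k)\to g(u)$ in $L^1(\xO)$ via Vitali's theorem exactly as in \eqref{100}. So it suffices to prove \eqref{65} at the level of the truncated problem with $u$ replaced by $v_k$ and $g$ by $g_k$, and then pass to the limit using Fatou's lemma on the left and $L^1$-convergence on the right.

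For the truncated problem, the key test function is $\xz_\ve:=\tfrac1\ve T_\ve(v_k)$ (or, more cleanly, the sign-type function $\xz=\sign^+(v_k)-\sign^-(v_k)$ approximated by $\tfrac1\ve T_\ve(v_k)$), which lies in $W^{s,p}_0(\xO)\cap L^\infty(\xO)$ and is hence admissible in \eqref{weak2}. Using the elementary monotonicity inequality
\[
|a-b|^{p-2}(a-b)\big(T_\ve(a)-T_\ve(b)\big)\ge 0\qquad\forall a,b\in\BBR,
\]
the double-integral term is nonnegative, so \eqref{weak2} yields
\[
\int_\xO g_k(v_k)\,\tfrac1\ve T_\ve(v_k)\,{\dd} x\le \int_\xO \tfrac1\ve T_\ve(v_k)\,\xm\,{\dd} x\le \int_\xO|\xm|\,{\dd} x,
\]
where in the last step I use $|\tfrac1\ve T_\ve(v_k)|\le 1$. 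Since $g_k$ is nondecreasing with $g_k(0)=0$, we have $g_k(t)\,\tfrac1\ve T_\ve(t)\ge 0$ for all $t$ and $g_k(t)\,\tfrac1\ve T_\ve(t)\uparrow |g_k(t)|$ as $\ve\to 0$; monotone convergence then gives $\int_\xO |g_k(v_k)|\,{\dd} x\le \int_\xO |\xm|\,{\dd} x$. Letting $k\to\infty$ and combining Fatou's lemma with the $L^1$-convergence $g_k(v_k)\to g(u)$ yields \eqref{65}.

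For the second assertion, assume $\xm\ge 0$. I would compare $u$ with the zero function: since $g(0)=0$, the constant $0$ is a (sub)solution of the equation with right-hand side $0\le\xm$, so monotonicity of the map $\xm\mapsto u_\xm$ from Theorem~\ref{var} immediately gives $u=u_\xm\ge u_0=0$ a.e. in $\BBR^N$. Alternatively, and more self-containedly, one can take $\xz=-\tfrac1\ve T_\ve(u^-)=-\tfrac1\ve T_\ve((-u)_+)\in W^{s,p}_0(\xO)\cap L^\infty(\xO)$ as a test function in \eqref{weak}: the nonlocal term, after using the monotonicity inequality above applied to $-u$, controls $\tfrac1{\ve}\iint |T_\ve(u^-)(x)-T_\ve(u^-)(y)|^p K(x,y)$ from below by a nonnegative quantity, while $\int_\xO g(u)\,(-\tfrac1\ve T_\ve(u^-))\,{\dd} x\ge 0$ because $g(u)\le 0$ on $\{u<0\}$ and $-\tfrac1\ve T_\ve(u^-)\le 0$ there; the right-hand side $\int_\xO -\tfrac1\ve T_\ve(u^-)\,{\dd}\xm\le 0$ since $\xm\ge 0$. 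Passing $\ve\to 0$ forces $\iint \frac{|u^-(x)-u^-(y)|^p}{|x-y|^{N+sp}}=0$, hence $u^-$ is constant, and being zero on $\BBR^N\setminus\xO$ it vanishes identically, i.e. $u\ge 0$ a.e.

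The main obstacle I anticipate is purely technical rather than conceptual: justifying the interchange of limits and integrals when passing from the truncated equation \eqref{weak2} to \eqref{weak}. One must ensure that the $L^1(\xO)$-equiintegrability of $\{g_k(v_k)\}$ (established via the uniform bound \eqref{11} on $\int_\xO g_k(v_k)v_k$, exactly as in the proof of Theorem~\ref{var}) is enough to apply Vitali's theorem and obtain $g_k(v_k)\to g(u)$ in $L^1(\xO)$; this is the same argument used to derive \eqref{100}, so it can be cited rather than repeated. Everything else reduces to the two elementary pointwise inequalities about $|a-b|^{p-2}(a-b)(T_\ve(a)-T_\ve(b))$ and about the sign of $g(t)\,T_\ve(t)$, together with monotone/Fatou convergence.
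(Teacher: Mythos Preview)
Your proof is correct, and the core idea---test the weak formulation with a bounded, Lipschitz, nondecreasing approximation of $\sign(u)$, use monotonicity to drop the nonlocal term, then pass to the limit---is exactly the paper's strategy. The difference is in the packaging: the paper works \emph{directly} with $u$ in \eqref{weak}, taking $\xf_k=\tanh(ku)\in W^{s,p}_0(\xO)\cap L^\infty(\xO)$ as test function; since $\tanh$ is smooth, odd, increasing and $1$-Lipschitz, the double integral is nonnegative by the same pointwise inequality you use, and Fatou's lemma on $g(u)\tanh(ku)\uparrow|g(u)|$ gives \eqref{65} in one step. Your detour through the truncations $g_k$ and the auxiliary minimizers $v_k$ is unnecessary: nothing prevents you from plugging $\tfrac1\ve T_\ve(u)$ straight into \eqref{weak}, since $T_\ve(u)\in W^{s,p}_0(\xO)\cap L^\infty(\xO)$ and the integral $\int_\xO g(u)\xz\,{\dd} x$ is already meaningful by the standing hypothesis that $u$ satisfies \eqref{weak}. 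Both sign-approximants ($\tanh(k\cdot)$ and $\tfrac1\ve T_\ve(\cdot)$) do the same job; the paper's choice just avoids the extra limiting layer.

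For the positivity assertion when $\xm\ge0$, the paper's proof does not spell out an argument; either of your two routes (monotonicity of $\xm\mapsto u_\xm$ from Theorem~\ref{var}, or testing with $-\tfrac1\ve T_\ve(u^-)$) is valid and fills that gap cleanly.
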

\begin{proof}
Let $k>0.$  Using $\xf_k=\tanh(ku)$ as test function in \eqref{weak}, we obtain
\bal\BAL
\int_{\BBR^N}\int_{\BBR^N}|u(x)-u(y)|^{p-2}(u(x)-u(y))(\xf_k(x)-\xf_k(y))K(x,y){\dd} x{\dd} y+ \int_\xO g(u_\xm)\xf_k {\dd} x=\int_{\xO}\xm\xf_k {\dd} x
\EAL
\eal
If $\infty>u(x)>u(y)>-\infty,$ then there exists $\xi\in (u(y),u(x))$ such that

\bal
\xf_k(x)-\xf_k(y)=(1-\tanh^2(k\xi))(u(x)-u(y))\geq c(\xi,k)(u(x)-u(y)).
\eal
Combining the last two displays, we can easily obtain that
\bal
\int_\xO g(u_\xm)\xf_k {\dd} x\leq\int_{\xO}|\xm| {\dd} x
\eal
By Fatou's lemma and the above inequality, we can easily deduce \eqref{65}.
\end{proof}

\subsection{Subcritical nonlinearities}\label{secsubcr} In this subsection, we always assume that $s\in(0,1),$ $1<p<\frac{N}{s}$ and $g\in C(\BBR)$ is nondecreasing such that $g(0)=0.$

\begin{lemma}\label{prop4}
Let $\xl_i\in \mathfrak{M}_b^+(\xO)$ ($i=1,2$). Then there exist very weak solutions $u,u_i$ ($i=1,2$) to problems

\ba\label{1a}
\left\{
\BAL
Lu +g(u)&=\xl_{1}-\xl_{2}\quad&&\text{in}\;\; \xO\\
u&=0\quad &&\text{in}\;\BBR^N\setminus\xO,
\EAL\right.
\ea
\ba\label{1b}
\left\{
\BAL
L u_1 +g(u_1)&=\xl_{1}\quad&&\text{in}\;\; \xO\\
u&=0\quad &&\text{in}\;\BBR^N\setminus\xO
\EAL\right.
\ea
and

\ba\label{1c}
\left\{
\BAL
L u_{2}-g(-u_2)&=\xl_{2}\quad&&\text{in}\;\; \xO\\
u&=0\quad &&\text{in}\;\BBR^N\setminus\xO,
\EAL\right.
\ea
such that there hold

\ba\label{36a}
u_1,u_2\geq0\quad\text{and}\quad-u_2\leq u\leq u_1\quad\text{a.e. in}\;\;\BBR^N.
\ea

In addition, for any $q\in(0,\frac{N(p-1)}{N-s})$ and $h\in(0,s),$ there exists  a positive constant $c=c(N,p,s,\xL_K, q,h,|\xO|)$ such that 

\ba\label{est2nonlinear}\BAL
\left(\int_\xO |g(u)|{\dd} x\right)^\frac{1}{p-1}&+\left(\int_{\BBR^N}\int_{\BBR^N}\frac{|u(x)-u(y)|^q}{|x-y|^{N+hq}}{\dd} x{\dd} y\right)^\frac{1}{q}\leq c(\xl_1(\xO)+\xl_2(\xO))^\frac{1}{p-1}
\EAL
\ea

and

\ba\label{est3nonlinear}\BAL
\left(\int_\xO |g((-1)^{i+1}u_i)|{\dd} x\right)^\frac{1}{p-1} &+\left(\int_{\BBR^N}\int_{\BBR^N}\frac{|u_i(x)-u_i(y)|^q}{|x-y|^{N+hq}}{\dd} x{\dd} y\right)^\frac{1}{q}\leq c\xl_i(\xO)^\frac{1}{p-1}.
\EAL
\ea

Finally, there exist very weak solutions $v_i$ to \eqref{weaksol} with $\xm=\xl_i$ (i=1,2) such that

\ba\label{36b}
0\leq u_i\leq v_i\leq C_i W_{s,p}^{2\diam(\xO)}[\xl_i]\quad\text{a.e. in}\;\; \xO,
\ea
where $C_i$ is a positive constant depending only on $p,s,\xL_K$ and $N.$
\end{lemma}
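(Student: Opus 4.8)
The plan is to construct the three solutions $u_1,u_2,u$ by approximation together with the comparison principle, and then extract the estimates by passing to the limit.

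\medskip

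\textbf{Step 1: Approximate data and solve regularized problems.} First I would mollify: let $\{\xr_n\}$ be a sequence of mollifiers, set $\xl_{i,n}=\xr_n*\xl_i\in C_0^\infty(\BBR^N)\cap L^{p'}(\xO)$, which are nonnegative and converge weakly$^*$ to $\xl_i$. Applying Theorem \ref{var} (with $\xm=\xl_{1,n}-\xl_{2,n}\in(W^{s,p}_0(\xO))^*$) produces a unique weak solution $u_n\in W_0^{s,p}(\xO)$ of \eqref{1a} with smooth data, in the sense of \eqref{weak}; similarly one gets $u_{i,n}$ for \eqref{1b} and \eqref{1c}. For \eqref{1c} one uses the nondecreasing function $\tilde g(t)=-g(-t)$, which still satisfies $\tilde g(0)=0$ and $r\tilde g(r)\ge 0$. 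By Lemma \ref{comp}, since $\xl_{i,n}\ge 0$ we get $u_{i,n}\ge 0$ a.e.; and by the monotonicity of $\xm\mapsto u_\xm$ from Theorem \ref{var}, comparing $\xl_{1,n}-\xl_{2,n}\le \xl_{1,n}$ gives $u_n\le u_{1,n}$, and comparing $\xl_{2,n}-\xl_{1,n}\ge -\xl_{1,n}\ge\cdots$ — more carefully, applying the monotonicity to the equation for $-u$ (which solves $L(-u)+\tilde g(-u)=\xl_{2,n}-\xl_{1,n}\le\xl_{2,n}$) yields $-u_n\le u_{2,n}$. Thus \eqref{36a} holds at the approximate level.

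\medskip

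\textbf{Step 2: A priori estimates.} Since $g$ is nondecreasing with $g(0)=0$, dropping the absorption term via the sign condition and testing with $T_k$ gives exactly the estimates of Propositions \ref{weaklp} and \ref{lpest} applied to $u_n$ (with the measure replaced by $\xl_{1,n}-\xl_{2,n}$, whose total mass is controlled by $\xl_1(\xO)+\xl_2(\xO)$), so $u_n$ satisfies \eqref{est7}, \eqref{trunc} and \eqref{est1} uniformly in $n$; the same for $u_{i,n}$ with mass $\xl_i(\xO)$. Lemma \ref{comp} gives $\int_\xO|g(u_n)|\,{\dd} x\le \xl_1(\xO)+\xl_2(\xO)$ and the analogue for $u_{i,n}$. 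Then exactly as in steps 2--3 of \cite[Theorem 3.4]{MMOP} (as invoked in the proof of Proposition \ref{sola}), these bounds plus the fractional Sobolev embedding let me extract subsequences with $u_n\to u$, $u_{i,n}\to u_i$ a.e. and strongly in $W^{h,q}(\BBR^N)$ for all $h\in(0,s)$, $q\in(0,\frac{N(p-1)}{N-s})$, with limits satisfying (i)--(iii) of Definition \ref{definsolu}. The a.e. convergence preserves \eqref{36a}, and Fatou together with the a.e. convergence of $g(u_n)\to g(u)$ (continuity of $g$) gives the absorption terms in $L^1$ and the estimates \eqref{est2nonlinear}, \eqref{est3nonlinear}. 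Passing to the limit in the weak formulation \eqref{weak} (using the equi-integrability of $g(u_n)$, exactly the Vitali argument already used in the proof of Theorem \ref{var}) shows $u$, $u_i$ are very weak solutions in the sense of Definition \ref{definsolu}.

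\medskip

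\textbf{Step 3: Wolff potential bound \eqref{36b}.} Let $v_{i,n}$ be the weak solution of \eqref{weaksol} with datum $\xl_{i,n}$. Since $u_{i,n}$ is a subsolution of $Lu_{i,n}=\xl_{i,n}$ (because $g((-1)^{i+1}u_{i,n})\ge 0$ on $\{(-1)^{i+1}u_{i,n}\ge 0\}=\BBR^N$ by Step 1), Proposition \ref{comparison} gives $u_{i,n}\le v_{i,n}$. Now I invoke Proposition \ref{wolffest}: passing $n\to\infty$ along a subsequence (as in the proof of Proposition \ref{wolffest}, where $v_{i,n}$ converges a.e. and in $W^{h,q}$ to a very weak solution $v_i$ of \eqref{weaksol} with datum $\xl_i$), I obtain in the limit $0\le u_i\le v_i\le C_i W_{s,p}^{2\diam(\xO)}[\xl_i]$ a.e. in $\xO$, which is \eqref{36b}.

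\medskip

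The main obstacle is the passage to the limit in the nonlinear absorption terms and in the $p$-Laplacian principal term: one needs the a.e. convergence of the gradients' nonlocal analogue (the difference quotients) to identify the limit, which is where the strong $W^{h,q}$ convergence from the a priori estimates, combined with the equi-integrability/Vitali argument for $g(u_n)$, does the work — this is precisely the delicate point already handled in Proposition \ref{sola} and Theorem \ref{var}, so it can be cited rather than redone. A secondary technical point is bookkeeping the three coupled comparisons in Step 1 so that a single subsequence works simultaneously for $u_n$, $u_{1,n}$, $u_{2,n}$, $v_{1,n}$, $v_{2,n}$; this is routine since finitely many diagonal extractions suffice.
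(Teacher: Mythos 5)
Your proposal follows the same route as the paper: mollify the data, solve the regularized problems via Theorem \ref{var}, establish the comparison chain $-v_{2,n}\le -u_{2,n}\le u_n\le u_{1,n}\le v_{1,n}$ at the approximation level, extract the a priori bounds from Lemma \ref{comp} and Proposition \ref{lpest}, and pass to the limit as in Proposition \ref{sola}, finishing \eqref{36b} with Proposition \ref{wolffest}. The reflection trick $\tilde g(t)=-g(-t)$ and the observation that $u_{i,n}$ is a subsolution of $Lw=\xl_{i,n}$ (since $g((-1)^{i+1}u_{i,n})\ge 0$), hence $u_{i,n}\le v_{i,n}$ by Proposition \ref{comparison}, are exactly the comparisons the paper uses; you spell them out a bit more explicitly than the paper does.

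The one place where you and the paper diverge is the final convergence of the absorption terms: the paper invokes dominated convergence, you invoke Vitali ``exactly as in the proof of Theorem \ref{var}.'' Be aware that neither justification is fully self-contained in the general setting of the lemma, because the equi-integrability estimate in Theorem \ref{var} rests on the uniform bound $\int_\Omega g_k(v_k)v_k\,\dd x\le C\|\mu\|_{(W^{s,p}_0)^*}^{p'}$, and $\|\xl_{i,n}\|_{(W^{s,p}_0(\Omega))^*}$ is \emph{not} uniformly bounded as the mollified data concentrate to a singular measure; the paper's dominated-convergence claim has the analogous issue since there is no $n$-independent dominant. This is harmless in practice because Lemma \ref{prop4} is only ever invoked with truncated (hence bounded) nonlinearities $T_m\circ g$ (see Lemma \ref{prop5} and the proof of Theorem \ref{subcritical}), for which either argument goes through immediately. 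If you want your write-up to be self-contained for bounded $g$, just replace the Vitali appeal by the trivial dominated-convergence argument with the constant dominant $\sup|g|$; otherwise your sketch matches the paper's both in structure and in its level of informality at this step.
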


\begin{proof}
 Let $\{\xr_n\}_{1}^\infty$ be a sequence of mollifiers  and $\xl_{n,i}=\xr_n*\xl_i.$ Then $\xl_{n,i}\in C_0^\infty(\BBR^N).$ By Proposition \ref{var}, there exist unique solutions $u_n,u_{n,i},v_{n,i}\in W^{s,p}_0(\xO)$ to the following problems

\bal
\left\{
\BAL
L u_n +g(u_n)&=\xl_{n,1}-\xl_{n,2}\quad&&\text{in}\;\; \xO\\
u&=0\quad\quad &&\text{in}\;\; \BBR^N\setminus\xO,
\EAL\right.
\eal

\bal
\left\{
\BAL
L u_{n,1} +g(u_{n,1})&=\xl_{n,1}\quad\text{in}\;\; \xO\\
u&=0,\quad\quad \text{in}\;\; \BBR^N\setminus\xO,
\EAL\right.
\eal

\bal
\left\{
\BAL
L u_{n,2} -g(-u_{n,2})&=\xl_{n,2}\quad\text{in}\;\; \xO\\
u&=0,\quad\quad \text{in}\;\; \BBR^N\setminus\xO
\EAL\right.
\eal
and

\bal
\left\{
\BAL
L v_{n,i}&=\xl_{n,i}\quad&&\text{in}\;\; \xO\\
u&=0,\quad\quad &&\text{in}\;\; \BBR^N\setminus\xO,
\EAL\right.
\eal
such that there holds

\ba\label{36c}
-v_{n,2}\leq-u_{n,2}\leq u_n\leq u_{n,1}\leq v_{n,1}\quad\text{a.e. in} \;\;\BBR^N.
\ea

By Lemma \ref{comp} and Proposition \ref{lpest}, for any $q\in(0,\frac{N(p-1)}{N-s})$ and $h\in(0,s),$ there exists a positive constant $c=c(N,p,s,\xL_K, q,h,|\xO|)$ such that 

\ba\label{12}\BAL
\left(\int_\xO |g(u_n)|{\dd} x\right)^\frac{1}{p-1}+\left(\int_{\BBR^N}\int_{\BBR^N}\frac{|u_n(x)-u_n(y)|^q}{|x-y|^{N+hq}}{\dd} x{\dd} y\right)^\frac{1}{q}\leq c\bigg(\int_\xO\xl_{n,1}+\xl_{n,2}{\dd} x\bigg)^\frac{1}{p-1},
\EAL
\ea

\ba\label{13}\BAL
\left(\int_\xO |g(u_{n,1})|{\dd} x\right)^\frac{1}{p-1}+\left(\int_{\BBR^N}\int_{\BBR^N}\frac{|u_{n,1}(x)-u_{n,1}(y)|^q}{|x-y|^{N+hq}}{\dd} x{\dd} y\right)^\frac{1}{q}\leq c\bigg(\int_\xO\xl_{n,1}{\dd} x\bigg)^\frac{1}{p-1},
\EAL
\ea
\ba\label{14}\BAL
\left(\int_\xO |g(-u_{n,2})|{\dd} x\right)^\frac{1}{p-1}+\left(\int_{\BBR^N}\int_{\BBR^N}\frac{|u_{n,2}(x)-u_{n,2}(y)|^q}{|x-y|^{N+hq}}{\dd} x{\dd} y\right)^\frac{1}{q}\leq c\bigg(\int_\xO\xl_{n,2}{\dd} x\bigg)^\frac{1}{p-1}
\EAL
\ea
and
\ba\label{15}
\left(\int_{\BBR^N}\int_{\BBR^N}\frac{|v_{n,i}(x)-v_{n,i}(y)|^q}{|x-y|^{N+hq}}{\dd} x{\dd} y\right)^\frac{1}{q}
\leq c\bigg(\int_\xO\xl_{n,i}{\dd} x\bigg)^\frac{1}{p-1}.
\ea
Furthermore, in view of the proof of \eqref{estn2}, we have that $T_k(u_n),T_k(u_{n,i}),T_k(v_{n,i})\in W^{s,p}_0(\xO)$ and satisfy \eqref{n2} with $\xm=\xl_1+\xl_2.$

Since the sequences $\{\xl_{n,i}\}_{n}$ are uniformly bounded in $\mathfrak{M}_b(\xO),$ as in the proof of Proposition \ref{sola}, we may show that there exist subsequences, still denoted by the same index, such that $u_n\to u,$ $u_{n,i}\to u_i$ $v_{n,i}\to v_i$ in $W^{h,q}(\BBR^N)$ and a.e. in $\BBR^N.$ In addition, we may prove that $T_k(u),T_k(u_{i}),T_k(v_{i})\in W^{s,p}_0(\xO)$ for any $k>0.$
Finally, by dominated convergence theorem, we deduce that $g(u_{n})\to g(u),$ $g(u_{n,1})\to g(u_{1}),$ $g(-u_{n,2})\to g(-u_{2})$ in $L^1(\xO)$. Hence, combining all above, we can easily show that $u,u_i$ are very weak solutions of problems \eqref{1a}-\eqref{1c} respectively and $v_i$ are very weak solutions of problem \eqref{weaksol} with $\xm=\xl_i$ $(i=1,2).$

By proceeding as in the proof of Proposition \ref{wolffest} and using \eqref{36c}, we derive \eqref{36b}.

Estimates \eqref{est2nonlinear} and \eqref{est3nonlinear} follow by \eqref{12}, \eqref{13} and Fatou's lemma
\end{proof}

\begin{lemma}\label{prop5}
Let  $\xl_i\in \mathfrak{M}_b^+(\xO)$ for $i=1,2.$ We also assume that $g((-1)^{1+i} C W_{s,p}^{2R}[\xl_i])\in L^1(\xO),$ where $C$ is the constant in Proposition \ref{wolffest}. Then the conclusion of Lemma \ref{prop4} holds true.
\end{lemma}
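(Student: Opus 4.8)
The plan is to run the same approximation scheme as in the proof of Lemma \ref{prop4}, but now exploiting the weaker hypothesis that $g((-1)^{1+i}CW_{s,p}^{2R}[\xl_i])\in L^1(\xO)$ in place of the subcritical integral condition. First I would mollify, setting $\xl_{n,i}=\xr_n*\xl_i\in C_0^\infty(\BBR^N)$, and let $u_n,u_{n,i},v_{n,i}\in W^{s,p}_0(\xO)$ be the weak solutions produced by Proposition \ref{var} of the same four approximate problems as in the proof of Lemma \ref{prop4}. The comparison principle (Proposition \ref{comparison}, applied as in Theorem \ref{var}) again gives the ordering
\bal
-v_{n,2}\leq -u_{n,2}\leq u_n\leq u_{n,1}\leq v_{n,1}\quad\text{a.e. in}\;\;\BBR^N,
\eal
and Proposition \ref{lpest} together with Lemma \ref{comp} yields the a priori bounds \eqref{12}--\eqref{15} and \eqref{n2} with $\xm=\xl_1+\xl_2$. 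So the extraction of subsequences converging a.e. and in $W^{h,q}(\BBR^N)$, along with $T_k$ of the limits lying in $W^{s,p}_0(\xO)$, goes through verbatim.

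The only point where the proof of Lemma \ref{prop4} used more than the mollification is the passage to the limit in the nonlinear terms $g(u_n)\to g(u)$, $g(u_{n,1})\to g(u_1)$, $g(-u_{n,2})\to g(-u_2)$ in $L^1(\xO)$; there this was handled by a growth/equi-integrability argument built on the subcritical condition. Here I would instead use the Wolff potential bound and the comparison principle to produce, for each $n$, a nonnegative very weak solution $\tilde v_{n,i}$ of \eqref{weaksol} on $B_{2R}(x_0)$ with datum $\xm_{n,i}^{\oplus}=\xr_n*\xl_i$ dominating $v_{n,i}$, so that by Proposition \ref{wolffest} (and the $\mathrm{Tail}$ and $\essinf$ estimates \eqref{7}--\eqref{8} as in its proof)
\bal
0\leq u_{n,1}\leq v_{n,1}\leq C\,W_{s,p}^{2R}[\xl_1],\qquad 0\leq u_{n,2}\leq v_{n,2}\leq C\,W_{s,p}^{2R}[\xl_2]\quad\text{a.e. in}\;\;\xO,
\eal
with $C$ the constant of Proposition \ref{wolffest}, uniformly in $n$; here one passes $\xl_{n,i}\leq\xr_n*\xl_i$ and uses that the Wolff potential of $\xr_n*\xl_i$ is dominated, up to a constant, by that of $\xl_i$ (a standard fact; alternatively one mollifies $\xl_i$ so that $\xr_n*\xl_i\leq C\,\xr_{n'}*\xl_i$). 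Consequently $u_{n,1}$ lies between $0$ and the fixed $L^1$-function $CW_{s,p}^{2R}[\xl_1]$, and $g$ being nondecreasing with $g(0)=0$ gives $0\leq g(u_{n,1})\leq g(CW_{s,p}^{2R}[\xl_1])\in L^1(\xO)$ by hypothesis; likewise $0\leq g(-u_{n,2})\leq g(CW_{s,p}^{2R}[\xl_2])\in L^1(\xO)$, and since $-u_{n,2}\leq u_n\leq u_{n,1}$ the function $g(u_n)$ is squeezed between $-g(CW_{s,p}^{2R}[\xl_2])$ and $g(CW_{s,p}^{2R}[\xl_1])$, both in $L^1(\xO)$. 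Combined with $u_n\to u$ a.e. and the continuity of $g$, dominated convergence now gives $g(u_n)\to g(u)$, $g(u_{n,1})\to g(u_1)$, $g(-u_{n,2})\to g(-u_2)$ in $L^1(\xO)$, which is exactly what is needed to pass to the limit in the weak formulations.

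With the $L^1$-convergence of the nonlinear terms in hand, the rest is identical to the end of the proof of Lemma \ref{prop4}: the limits $u,u_i$ are very weak solutions of \eqref{1a}--\eqref{1c}, the $v_i:=\lim v_{n,i}$ are very weak solutions of \eqref{weaksol} with $\xm=\xl_i$, the ordering \eqref{36a} and $0\leq u_i\leq v_i\leq C_iW_{s,p}^{2\diam(\xO)}[\xl_i]$ of \eqref{36b} follow by passing to the limit in \eqref{36c} and by the argument of Proposition \ref{wolffest}, and estimates \eqref{est2nonlinear}, \eqref{est3nonlinear} follow from \eqref{12}--\eqref{14} and Fatou's lemma. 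The main obstacle is the uniform-in-$n$ domination step: one must be careful that the mollified measures $\xl_{n,i}$ generate very weak (super)solutions comparable to the Wolff potential of $\xl_i$ with a constant independent of $n$, and that the datum-comparison needed for $v_{n,i}\leq\tilde v_{n,i}$ is legitimate on the enlarged ball $B_{2R}(x_0)$; once this uniform dominating $L^1$-envelope $g(CW_{s,p}^{2R}[\xl_i])$ is secured, everything else is a routine repetition of the preceding arguments.
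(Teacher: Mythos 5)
Your overall plan---approximate, get uniform Wolff-potential domination, then pass to the limit by dominated convergence with $g(CW_{s,p}^{2R}[\xl_i])$ as the $L^1$ envelope---is the right idea, but the approximation scheme you chose is different from the paper's and contains a gap that you yourself flag as ``the main obstacle.''

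The paper does \emph{not} mollify the measures. Instead it truncates the nonlinearity: set $T_n(t)=\max(-n,\min(t,n))$ and replace $g$ by $T_n\circ g$. Since $T_n\circ g$ is bounded, it satisfies the subcritical integral condition \eqref{subcd0} automatically (the integrand is $O(s^{-\tilde q -1})$), so Lemma~\ref{prop4} applies directly with the original measures $\xl_1,\xl_2$, producing very weak solutions $u_n, u_{n,i}$ of the problems with absorption $T_n\circ g$ and a solution $v_i$ of the linear problem $Lv_i=\xl_i$. Crucially $v_i$ does not depend on $n$, so \eqref{36b} gives the uniform envelope $-CW_{s,p}^{2R}[\xl_2]\le -u_{n,2}\le u_n\le u_{n,1}\le CW_{s,p}^{2R}[\xl_1]$ for free, and since $|T_n\circ g(u_n)|\le\max\{g(CW_{s,p}^{2R}[\xl_1]),\,-g(-CW_{s,p}^{2R}[\xl_2])\}\in L^1(\xO)$ by hypothesis, dominated convergence lets you send $n\to\infty$ exactly as at the end of Lemma~\ref{prop4}.

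Your route instead mollifies $\xl_i$ and keeps $g$ fixed, and then you need $W_{s,p}^{2R}[\xr_n*\xl_i]\le C\,W_{s,p}^{2R}[\xl_i]$ pointwise with $C$ independent of $n$. This is \emph{not} a standard fact, and the naive estimate $(\xr_n*\xl_i)(B_r(x))\le\xl_i(B_{r+1/n}(x))$ breaks down for $r\lesssim 1/n$: you would be led to the divergent integral $\int_0^{1/n}\big(\xl_i(B_{2/n}(x))/r^{N-sp}\big)^{1/(p-1)}\,dr/r$ unless you fall back on the density bound $\|\xr_n*\xl_i\|_\infty$, which blows up with $n$. Unlike the Riesz potential, the (nonlinear, truncated) Wolff potential does not commute with convolution, so there is no identity to rescue you here. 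Without this uniform envelope your dominated-convergence step has nothing to dominate with, and the argument stalls precisely where you said it might. The paper's truncation-of-$g$ scheme is the clean way to avoid this: the envelope $CW_{s,p}^{2R}[\xl_i]$ comes from the $n$-independent linear solution $v_i$, not from the data.
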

\begin{proof}
Let $T_n(t)=\max(-n,\min(t,n))$ for any $n\in\BBN.$ By Lemma \ref{prop4}, there exist very weak solutions $u_n,u_{n,i}, v_{n,i}\in W^{s,p}_0(\xO)$ of the following problems

\bal
\left\{
\BAL
L u_n +T_nog(u_n)&=\xl_1-\xl_2\quad&&\text{in}\;\; \xO\\
u&=0\quad &&\text{in}\;\; \BBR^N\setminus\xO,
\EAL\right.
\eal

\bal
\left\{
\BAL
L u_{n,1} +T_nog(u_{n,1})&=\xl_1\quad&&\text{in}\;\; \xO\\
u&=0\quad\text{in}\;\; \BBR^N\setminus\xO,
\EAL\right.
\eal

\bal
\left\{
\BAL
L u_{n,2} -T_nog(-u_{n,2})&=\xl_2\quad&&\text{in}\;\; \xO\\
u&=0\quad &&\text{in}\;\; \BBR^N\setminus\xO
\EAL\right.
\eal
and
\bal
\left\{
\BAL
L v_{i}&=\xl_{i}\quad&&\text{in}\;\; \xO\\
u&=0\quad\quad &&\text{in}\;\; \BBR^N\setminus\xO,
\EAL\right.
\eal
such that there holds

\bal
-C W_{1,p}^{2\diam(\xO)}[\xl_2]\leq-v_{2}\leq-u_{n,2}\leq u_n\leq u_{n,1}\leq v_{1}\leq C W_{1,p}^{2\diam(\xO)}[\xl_1]\quad\text{a.e. in}\;\; \BBR^N
\eal
and for any $n\in \BBN.$  The rest of the proof can proceed similarly to the proof of Lemma \ref{prop4} and we omit it.
\end{proof}

\begin{proposition} \label{subcrcon} Assume
\ba \label{subcd0}
\xL_{g}:=\int_1^\infty  s^{-\tilde q-1} (g(s)-g(-s)) {\dd} s<\infty
\ea
for $\tilde q>0.$ Let $v$ be a measurable function defined in $\xO.$ For $s>0$, set
\bal E_s(v):=\{x\in \xO:| v(x)|>s\} \quad \text{and} \quad e(s):=|E_s(v)|.
\eal
Assume that there exists a positive constant $C_0$ such that
\ba \label{e}
e(s) \leq C_0s^{-\tilde q} \quad \forall s\geq1.
\ea
Then for any $s_0\geq1,$ there hold
\bal
\norm{g(|v|)}_{L^1(\Omega)}&\leq \int_{\Omega \setminus E_{s_0}(v)} g(|v|)d x+\tilde q C_0\int_{s_0}^\infty  s^{-\tilde q-1} g(s) {\dd} s, \\
\norm{g(-|v|)}_{L^1(\Omega)}&\leq  -\int_{\Omega \setminus E_{s_0}(v)} g(-|v|){\dd} x-\tilde q C_0\int_{s_0}^\infty  s^{-\tilde q-1}g(-s) {\dd} s.
\eal
\end{proposition}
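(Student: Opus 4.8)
The plan is to prove the first inequality; the second follows by applying the first to $-v$ in place of $v$, using that $g(-|v|) = -(\tilde g(|v|))$ where $\tilde g(t) := -g(-t)$ is again nondecreasing with $\tilde g(0)=0$, and that $\xL_{\tilde g} = \int_1^\infty s^{-\tilde q -1}(\tilde g(s)-\tilde g(-s))\,{\dd}s = \int_1^\infty s^{-\tilde q-1}(g(s)-g(-s))\,{\dd}s < \infty$, while the distribution function $e(s)$ is unchanged under $v\mapsto -v$. So it suffices to establish
\bal
\norm{g(|v|)}_{L^1(\Omega)} \leq \int_{\Omega \setminus E_{s_0}(v)} g(|v|)\,{\dd}x + \tilde q\, C_0 \int_{s_0}^\infty s^{-\tilde q -1} g(s)\,{\dd}s.
\eal

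First I would split $\Omega = (\Omega\setminus E_{s_0}(v)) \cup E_{s_0}(v)$ and note that on $E_{s_0}(v)$ we have $|v|>s_0\geq 1$, so $g(|v|)\geq g(1)$; since also $g$ is nondecreasing we may assume (after possibly discarding the trivial case) that the contribution $\int_{E_{s_0}(v)} g(|v|)\,{\dd}x$ is what we must bound by the Wolff-type tail integral. The standard device is the layer-cake (Fubini) representation: for the nonnegative measurable function $g(|v|)$ on $E_{s_0}(v)$, writing $g(|v(x)|) = g(s_0) + \int_{s_0}^{|v(x)|} {\dd}g(t)$ for $x\in E_{s_0}(v)$ (interpreting the integral as a Lebesgue--Stieltjes integral against the nondecreasing $g$, valid since $g\in C(\R)$), and integrating in $x$,
\bal
\int_{E_{s_0}(v)} g(|v|)\,{\dd}x = g(s_0)\, e(s_0) + \int_{s_0}^\infty e(t)\, {\dd}g(t),
\eal
by Tonelli's theorem, since $\{x\in E_{s_0}(v): |v(x)|>t\} = E_t(v)$ for $t\geq s_0$.

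Next I would insert the hypothesis \eqref{e}: $e(t)\leq C_0 t^{-\tilde q}$ for all $t\geq 1$, hence for all $t\geq s_0$. This gives
\bal
\int_{E_{s_0}(v)} g(|v|)\,{\dd}x \leq C_0 s_0^{-\tilde q} g(s_0) + C_0 \int_{s_0}^\infty t^{-\tilde q}\, {\dd}g(t).
\eal
Now I integrate by parts in the Stieltjes integral: $\int_{s_0}^\infty t^{-\tilde q}\,{\dd}g(t) = \lim_{R\to\infty}\big[ t^{-\tilde q} g(t)\big]_{s_0}^R + \tilde q \int_{s_0}^\infty t^{-\tilde q -1} g(t)\,{\dd}t$. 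The convergence $\int_{s_0}^\infty t^{-\tilde q -1} g(t)\,{\dd}t<\infty$ together with $g\geq 0$ nondecreasing forces $R^{-\tilde q}g(R)\to 0$ (a routine monotonicity argument: if $R^{-\tilde q}g(R)\not\to 0$ along a sequence, then $g(t)\gtrsim t^{\tilde q}$ on a positive-measure set of large $t$, contradicting integrability). The boundary term at $R\to\infty$ thus vanishes, and the boundary term at $s_0$ is $-s_0^{-\tilde q}g(s_0)$, which exactly cancels the leading term $C_0 s_0^{-\tilde q}g(s_0)$. Hence
\bal
\int_{E_{s_0}(v)} g(|v|)\,{\dd}x \leq \tilde q\, C_0 \int_{s_0}^\infty t^{-\tilde q -1} g(t)\,{\dd}t,
\eal
and adding $\int_{\Omega\setminus E_{s_0}(v)} g(|v|)\,{\dd}x$ to both sides yields the claim.

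The main obstacle is purely technical rather than conceptual: making the Lebesgue--Stieltjes integration by parts rigorous when $g$ need only be continuous (not $C^1$) and handling the finiteness/vanishing of the limit term. If one wants to avoid Stieltjes integrals entirely, an equivalent route is to observe $\int_{E_{s_0}(v)} g(|v|)\,{\dd}x = \int_0^\infty |\{x\in E_{s_0}(v): g(|v(x)|)>\lambda\}|\,{\dd}\lambda$ and change variables via the (right-continuous) generalized inverse of $g$; but the Stieltjes version is cleaner and the cancellation of the $s_0^{-\tilde q}g(s_0)$ term is the point that must be tracked carefully. Note also that \eqref{subcd0} (finiteness of $\xL_g$) is what guarantees the right-hand side is finite, so the statement is non-vacuous precisely under that hypothesis.
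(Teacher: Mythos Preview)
Your proof is correct. The paper itself omits the argument entirely, referring to \cite[Lemma 5.1]{GkiNg_source}; your layer-cake representation of $\int_{E_{s_0}(v)} g(|v|)\,{\dd}x$ as $g(s_0)e(s_0)+\int_{s_0}^\infty e(t)\,{\dd}g(t)$ followed by Stieltjes integration by parts is the standard route and almost certainly matches what is in the cited reference. One minor wording point: the reduction of the second inequality to the first is not really ``applying the first to $-v$'' (since $|-v|=|v|$) but rather applying the first inequality with $g$ replaced by $\tilde g(t)=-g(-t)$, exactly as you go on to explain; the substance of your reduction is correct.
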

\begin{proof}
The proof is very similar to the one of \cite[Lemma 5.1]{GkiNg_source} and we omit it.
\end{proof}

\begin{proof}[\textbf{Proof of Theorem \ref{subcritical}}]
Let $\xl_1=\xm^+$ and $\xl_2=\xm^-.$ By Lemma \ref{prop4}, there exist very weak solutions $u_n, v_{i}$ of the following problems

\bal
\left\{
\BAL
L u_n +T_nog(u_n)&=\xl_1-\xl_2\quad&&\text{in}\;\; \xO\\
u&=0\quad &&\text{in}\;\; \BBR^N\setminus\xO
\EAL\right.
\eal
and
\bal
\left\{
\BAL
L v_{i}&=\xl_{i}\quad&&\text{in}\;\; \xO\\
u&=0\quad\quad &&\text{in}\;\; \BBR^N\setminus\xO,
\EAL\right.
\eal
such that there holds

\bal
-v_{2}\leq u_n\leq v_{1}\quad\text{a.e. in}\;\; \BBR^N\;\;\text{and}\;\;\forall n\in \BBN.
\eal
 Also, taking into consideration that $g$ in nondecreasing with $g(0)=0$, we may show that $T_k(u_n),T_k(v_i)$ satisfy \eqref{n2} with $\xm=\xl_1+\xl_2.$ In addition, by \eqref{estn1}, there holds

\bal
\norm{v_1^{p-1}}^{*}_{L^{\frac{N}{N-sp}}_w(\BBR^N)}+ \norm{v_2^{p-1}}^{*}_{L^{\frac{N}{N-sp}}_w(\BBR^N)}\leq C_1(N,p,s,\xL_K)(\xl_1(\xO)+\xl_2(\xO)).
\eal

By \eqref{ue} and Proposition \ref{subcrcon}, we have that $|T_nog(u_n)|\leq g(v_1)-g(-v_2)$ and

\bal
&\norm{T_nog(u_n)}_{L^1(\Omega)}\leq \norm{g(v_1)}_{L^1(\Omega)}+\norm{g(-v_2)}_{L^1(\Omega)}\\
&\;\;\leq (g(s_0)-g(s_0))|\xO|\\
&\qquad+\tilde q C_1(N,p,s,\xL_K,\xL_g)(\xl_1(\xO)+\xl_2(\xO))^\frac{N(p-1)}{N-sp}\int_{s_0}^\infty  s^{-\tilde q-1} (g(s)-g(-s) {\dd} s\quad\forall n\in \BBN,
\eal
where $\tilde q=\frac{N(p-1)}{N-sp}.$ The desired result follows by proceeding as in the proof of Lemma \ref{prop4}.
\end{proof}

\subsection{Power nonlinearities: Proof of Theorem \ref{powerabsorptionsupercritical} }\label{secpower}

In order to prove Theorem \ref{powerabsorptionsupercritical}, we need to introduce some notations concerning the Bessel capacities, we refer the reader to \cite{Adbook} for more detail. For $\ga\in\BBR$ we define the Bessel kernel of order $\ga$ by $G_{\ga}(\xi)=\CF^{-1}(1+|.|^2)^{-\frac{\ga}{2}}(\xi)$, where $\CF$ is the Fourier transform of moderate distributions in $\BBR^N$. For any $\xb>1, $ the Bessel space $L_{\ga,\xb}(\BBR^N)$ is given by
\bal L_{\ga,\xb}(\BBR^N):=\{f=G_{\alpha} \ast g:g\in L^{\xb}(\BBR^N)\},
\eal
with norm
\bal \|f\|_{L_{\xa,\xb}(\BBR^N)}:=\|g\|_{L^\xb(\BBR^N)}=\|G_{-\ga}\ast f\|_{L^\xb(\BBR^N)}.
\eal
The Bessel capacity is defined as follows.
 \begin{definition}
 Let $\xa>0,$ $1< \xb<\infty$ and $E\subset\BBR^N.$ Set
 \bal
 \mathcal{S}_E:=\{g\in L^\xb(\BBR^N):\;g\geq0,\;G_{\alpha} \ast g(x)\geq 1\;\;\text{for any}\;x\in E\}.
 \eal
Then
\ba\label{besselcapacity}
\mathrm{Cap}_{{\ga,\xb}}(E):=\inf\{\|g\|^\xb_{L^\xb(\BBR^N)}; g\in \mathcal{S}_E \}.
\ea
If $\mathcal{S}_E=\emptyset,$ we set $\mathrm{Cap}_{{\ga,\xb}}(E)=\infty.$
\end{definition}
\noindent In the sequel, we denote by $L_{-\xa,\xb'}(\BBR^N)$ the dual of $L_{\xa,\xb}(\BBR^N)$ and  we set
\bal
\BBG_{\ga}[\xm](x)=\int_{\BBR^N}G_{\ga}(x,y){\dd} \xm(y)\quad\forall \xm\in \mathfrak{M}(\mathbb{R}^N).
\eal

\begin{proof}[\textbf{Proof of Theorem \ref{powerabsorptionsupercritical}}]
Since $\xm$ is absolutely continuous with respect to the capacity \small $\mathrm{Cap}_{{sp,\frac{\xk}{\xk-p+1}}},$ \normalsize the measures $\xm^+,\xm^-$ have the same property. Thus, by \cite[Theorem 2.5]{BNV} (see also \cite{BP}), there are nondecreasing sequences $\{\xm_{n}^\pm\}_n\subset L^{-sp,\frac{\xk}{p-1}}(\BBR^N)\cap\mathfrak{M}^+_b(\BBR^N)$ with compact support in $\xO,$ such that
they converge to $\xm^\pm$ in the narrow topology. Furthermore, by \cite[Theorem 2.3]{BNV} (see also \cite[Corollary 3.6.3]{Adbook}), 

\bal
\norm{ W^{2\diam(\xO)}_{\xa, p}[\gm_n^\pm]}_{L^{\xk}(\BBR^N)}^{\xk}\approx \norm{\BBG_{sp}[\xm_n^\pm]}_{L^{\frac{\xk}{p-1}}(\BBR^N)}^{\frac{\xk}{p-1}}<\infty.
\eal

By Lemma \ref{prop5}, there exist solutions $u_n,u_{n,i},v_i$ to the problems

\ba\label{38}
\left\{
\BAL
Lu_n +|u_n|^{\xk-1}u_n&=\xl_{n,1}-\xl_{n,2}\quad&&\text{in}\;\; \xO\\
u&=0\quad &&\text{in}\;\; \BBR^N\setminus\xO,
\EAL\right.
\ea

\ba\label{39}
\left\{
\BAL
L u_{n,1} +|u_{n,1}|^{\xk-1}u_{n,1}&=\xl_{n,1}\quad&&\text{in}\;\; \xO\\
u&=0,\quad &&\text{in}\;\; \BBR^N\setminus\xO,
\EAL\right.
\ea

\ba\label{39b}
\left\{
\BAL
L u_{n,2} +|u_{n,2}|^{\xk-1}u_{n,2}&=\xl_{n,2}\quad&&\text{in}\;\; \xO\\
u&=0\quad\quad &&\text{in}\;\; \BBR^N\setminus\xO
\EAL\right.
\ea
and
\bal
\left\{
\BAL
L v_{n,i}&=\xl_{n,i}\quad&&\text{in}\;\; \xO\\
u&=0\quad\quad &&\text{in}\;\; \BBR^N\setminus\xO,
\EAL\right.
\eal
such that there holds

\ba\label{42}\BAL
 -v_{n,2}\leq-u_{n,2}&\leq u_n\leq u_{n,1}\leq v_{n,1}\quad\text{a.e. in}\;\;\BBR^N.
\EAL
\ea
Furthermore,  in view of the proof of Lemmas \ref{prop4} and \ref{prop5}, the sequences $\{u_{n,i}\},\{v_{n,i}\}$ satisfy \eqref{12}-\eqref{15} with $g(t)=|t|^\xk\sign(t),$ $\xl_{n,1}=\xm_n^+$ and $\xl_{n,2}=\xm_n^-,$ as well as they can be constructed such that

\ba\label{42b}
u_{n,i}\leq u_{n+1,i}\quad\text{and}\quad v_{n,i}\leq v_{n+1,i}\quad \text{a.e. in}\;\BBR^N, \forall n\in\BBN\;\text{and}\;i=1,2.
\ea

By \eqref{12}-\eqref{13} with $g(t)=|t|^\xk\sign(t),$ $\xl_{n,1}=\xm_n^+$ and $\xl_{n,2}=\xm_n^-,$ we have
\bal
\int_\xO |u_{n,1}|^\xk{\dd}\leq \xm^+(\xO)\quad\text{and}\quad \int_\xO |u_{n,2}|^\xk{\dd}\leq \xm^-(\xO)\quad\forall n\in\BBN.
\eal
By \eqref{12}-\eqref{15} with $g(t)=|t|^\xk\sign(t),$ $\xl_{n,1}=\xm_n^+$ and $\xl_{n,2}=\xm_n^-,$ there are  subsequences, still denoted by the same index, such that $u_n\to u,$ $u_{n,i}\to u_i$ $v_{n,i}\to v$ in $W^{h,q}(\BBR^N)$ and a.e. in $\BBR^N.$ In addition, $T_k(u),T_k(u_i), T_k(v_i)\in W^{s,p}_0(\BBR^N)$ and

\bal
\int_\xO |u_1|^k{\dd} x\leq \xm^+(\xO)\quad\text{and}\quad \int_\xO |u_2|^k{\dd} x\leq \xm^-(\xO).
\eal
Therefore, by dominated convergence theorem, we obtain that $|u_{n}|^\xk\to |u|^\xk,$ $|u_{n,1}|^\xk\to |u_{1}|^\xk,$ $|u_{n,2}|^\xk\to |u_{2}|^\xk$ in $L^1(\xO)$. This, implies that $u,u_i$ are very weak solutions of problems \eqref{1a}-\eqref{1c} respectively and $v_i$ are very weak solution  of problem \eqref{weaksol} with $\xm=\xl_i,$ where  $\xl_1=\xm^+$ and $\xl_2=\xm^-.$

Estimate \eqref{41} follows by \eqref{42} and \eqref{36b}. Estimate \eqref{est2nonlinearpower} follows by \eqref{12} with $g(t)=|t|^\xk\sign(t),$ $\xl_{n,1}=\xm_n^+,$ $\xl_{n,2}=\xm_n^-$ and Fatou's lemma

\end{proof}

\section{Nonlocal equations with source nonlinearities}\label{secsource}

\subsection{Subcritical nonlinearities}\label{secsourcesubcr}
In this subsection, we investigate the existence of solutions to the following problem

\ba\label{trans} \left\{
\BAL
L v&=g(v)+\xr\tau\quad&&\text{in}\;\; \xO\\
v&=0\quad &&\text{in}\;\;\BBR^N\setminus\xO,
\EAL
\right.
\ea
where $\xr>0,$ $g\in C(\BBR)$ is a nondecreasing function and

\ba \label{gcomparepower}
 |g(t)|\leq a|t|^d \quad \text{for some } a>0,\; d>p-1 \text{ and for any } |t|\leq 1.
 \ea

Let us state the first existence result.
\begin{lemma} Let $1<p<\frac{s}{N}$ and $\tau \in C_0^\infty(\BBR^N)$ be such that  $\norm{\gt}_{L^1(\BBR^N)}\leq 1.$ Assume that $g \in L^\infty(\xO) \cap C(\R)$ satisfies \eqref{subcd0} for
\bal
\tilde q=\frac{N(p-1)}{N-sp}.
\eal
In addition, we assume that $g$ is nondecreasing and satisfies \eqref{gcomparepower}.

Then there exists a positive constant $\xr_0$ depending on $N,\xO,\Gl_{g},\xL_K,a,d,p,s$ such that for every $\xr \in (0,\xr_0),$  problem \eqref{trans}
admits a weak solution $v\in W^{s,p}_0(\xO)$ satisfying
	\ba\label{estM}
\||v|^{p-1}\|_{L_w^{\frac{N}{N-sp}}(\Gw)} \leq t_0,
	\ea
	where $t_0>0$ depends on $N,\xO,\Gl_{g},\xL_K,a,d,p,s$.
\end{lemma}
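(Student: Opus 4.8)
The plan is to realize $v$ as a fixed point of the solution map of the associated \emph{linear} problem. For a function $w$ (identified with its extension by zero to $\BBR^N$), let $S(w)\in W^{s,p}_0(\xO)$ be the unique weak solution of $LS(w)=g(w)+\xr\tau$ in $\xO$ with $S(w)=0$ in $\BBR^N\setminus\xO$, which is well defined by Proposition~\ref{existenceweaksol} (equivalently by Theorem~\ref{var} with $\tilde g\equiv0$) since $g$ is bounded and $\tau\in C_0^\infty(\BBR^N)$, so the datum lies in $L^{p'}(\xO)$; note that \eqref{gcomparepower} forces $g(0)=0$. Fix $q_1\in[1,\frac{Np}{N-sp})$, regard $S$ as a map on $L^{q_1}(\xO)$, and look for the fixed point in
\[
K:=\big\{\,w\in L^{q_1}(\xO):\ w=0\ \text{a.e. in }\BBR^N\setminus\xO,\ \ \norm{\,|w|^{p-1}\,}^{*}_{L^{\frac{N}{N-sp}}_w(\BBR^N)}\le t_0\,\big\},
\]
with $t_0>0$ chosen below; replacing the Marcinkiewicz quasinorm by the equivalent norm of \eqref{normLw} (which only costs a fixed constant, by \eqref{equinorm}), one may take $K$ closed, bounded and convex. (When $p$ is close to $1$, so that $K$ is no longer bounded in $L^1$, one works instead in the space $W^{h,q}(\BBR^N)$ of \eqref{sobolev} with the Schauder--Tychonoff theorem; the argument is otherwise unchanged.)

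The decisive point is that $S(K)\subseteq K$ once $t_0$ and $\xr$ are small. Let $w\in K$ and set $\tilde q=\frac{N(p-1)}{N-sp}$; its distribution function obeys
\[
\big|\{x\in\xO:\ |w(x)|>\sigma\}\big|=\big|\{\,|w|^{p-1}>\sigma^{p-1}\,\}\big|\le t_0^{\frac{N}{N-sp}}\,\sigma^{-\tilde q}\qquad(\sigma>0).
\]
On $\{|w|\le1\}$ we use \eqref{gcomparepower}: since $d>p-1$ and $\tilde q>p-1$, after replacing $d$ by some exponent in $(p-1,\tilde q)$ when $d\ge\tilde q$ we may assume $p-1<d<\tilde q$, and the distributional bound then gives $\int_{\{|w|\le1\}}|g(w)|\,\dd x\le a\int_\xO\min(|w|,1)^{d}\,\dd x\le c_1\,t_0^{d/(p-1)}$ with $d/(p-1)>1$. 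On $\{|w|>1\}$ we apply Proposition~\ref{subcrcon} with $s_0=1$, $C_0=t_0^{\frac{N}{N-sp}}$ and exponent $\tilde q$, using $g$ nondecreasing with $g(0)=0$ to bound $\int_1^\infty\sigma^{-\tilde q-1}g(\pm\sigma)\,\dd\sigma$ by $\xL_g$; this adds at most $c_2\,t_0^{\frac{N}{N-sp}}$ with $\frac{N}{N-sp}>1$. Hence $\norm{g(w)}_{L^1(\xO)}\le\Phi(t_0):=c_1 t_0^{d/(p-1)}+c_2 t_0^{\frac{N}{N-sp}}$, $c_1,c_2$ depending only on $N,s,p,\xL_g,a,d,|\xO|$. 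Feeding $\xm=g(w)+\xr\tau$ into \eqref{est7} and using $\norm{\tau}_{L^1(\BBR^N)}\le1$ yields $\norm{\,|S(w)|^{p-1}\,}^{*}_{L^{\frac{N}{N-sp}}_w(\BBR^N)}\le C(\Phi(t_0)+\xr)$, $C=C(N,s,p,\xL_K)$. As both exponents of $\Phi$ exceed $1$, $\Phi(t)/t\to0$ as $t\to0^{+}$; so we first fix $t_0>0$ with $C\Phi(t_0)\le t_0/2$ and then set $\xr_0:=t_0/(2C)$. For every $\xr\in(0,\xr_0)$ this gives $C(\Phi(t_0)+\xr)\le t_0$, i.e. $S(K)\subseteq K$, with $t_0,\xr_0$ depending only on the quantities in the statement.

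It then remains to apply Schauder's theorem to $S|_K$. For continuity, if $w_n\to w$ in $L^{q_1}(\xO)$ with $w_n,w\in K$, a subsequence has $w_n\to w$ a.e., and since $g$ is bounded, dominated convergence gives $g(w_n)\to g(w)$ in every $L^r(\xO)$, in particular in $(W^{s,p}_0(\xO))^{*}$; by the continuous dependence of the weak solution of \eqref{weaksol} on its datum (a standard property of the monotone operator $L$), $S(w_n)\to S(w)$ in $W^{s,p}_0(\xO)$, hence in $L^{q_1}(\xO)$, and uniqueness of the limit gives convergence of the whole sequence. For compactness, $\norm{g(w)+\xr\tau}_{L^\infty(\xO)}\le\norm{g}_{L^\infty}+\xr_0\norm{\tau}_{L^\infty}$ is bounded uniformly on $K$, so testing the equation for $S(w)$ against $S(w)$ bounds $\norm{S(w)}_{W^{s,p}_0(\xO)}$ uniformly, and the compact embedding $W^{s,p}_0(\xO)\hookrightarrow\hookrightarrow L^{q_1}(\xO)$ (valid since $q_1<\frac{Np}{N-sp}$) makes $S(K)$ relatively compact in $L^{q_1}(\xO)$. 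Schauder's theorem yields $v\in K$ with $v=S(v)$: by construction $v\in W^{s,p}_0(\xO)$ is a weak solution of \eqref{trans}, and $v\in K$ is exactly \eqref{estM}.

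I expect the self-mapping estimate to be the real difficulty: one must combine the local control \eqref{gcomparepower} near the origin with the subcritical condition \eqref{subcd0} at infinity so as to dominate $\norm{g(w)}_{L^1(\xO)}$ by a function of $\norm{\,|w|^{p-1}\,}^{*}_{L^{N/(N-sp)}_w}$ vanishing to order strictly greater than $1$ at $0$ --- only such a superlinear bound lets $K$ be made invariant for small $\xr$ --- and this is where $d>p-1$, the reduction $d<\tilde q$, and the finiteness of $\xL_g$ are used in an essential way. The non-convexity of Marcinkiewicz quasinorms is a minor nuisance, absorbed into a constant via \eqref{equinorm}.
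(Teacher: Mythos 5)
Your overall plan (solve the linearized problem, show the solution map sends a small ball into itself by controlling $\norm{g(w)}_{L^1}$ through a near-origin/far-field splitting, then apply Schauder) is the same as the paper's, and your self-mapping computation is the right one: the contributions $t_0^{d/(p-1)}$ from \eqref{gcomparepower} and $t_0^{N/(N-sp)}$ from Proposition \ref{subcrcon} are both superlinear, so $\xr_0,t_0$ can be chosen as you say. But there is a genuine structural gap in how you set up the fixed point. You take the fixed-point variable to be $w$ itself and constrain it by $\norm{\,|w|^{p-1}\,}^{*}_{L_w^{N/(N-sp)}}\le t_0$. Changing variables $a\mapsto a^{1/(p-1)}$ in the distribution function shows this is the same as requiring $\norm{w}^{*}_{L_w^{\tilde q}}\le t_0^{1/(p-1)}$ with $\tilde q=\tfrac{(p-1)N}{N-sp}$. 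When $\tilde q>1$, i.e.\ $p>\tfrac{2N}{N+s}$, this weak-$L^{\tilde q}$ ball is indeed closed, bounded and, after passing to the equivalent norm \eqref{normLw}, convex, and your argument goes through and is a clean, slightly more direct variant of the paper's. But for $1<p\le\tfrac{2N}{N+s}$ one has $\tilde q\le1$: there is no norm equivalent to $\norm{\cdot}^{*}_{L_w^{\tilde q}}$, the sublevel set $K$ need not be convex, and $L_w^{\tilde q}$ does not embed in $L^1$, so $K$ is not even bounded in your ambient space. Your parenthetical repair --- replacing $L^{q_1}$ by $W^{h,q}$ with small $q$ and using Schauder--Tychonoff --- does not help: for $q<1$ those spaces are quasi-Banach, not locally convex, and in any case the set you must keep invariant is still the same non-convex one. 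The device the paper uses to avoid all of this is precisely the power substitution: it fixed-points $v=|u|^{p-1}\sign(u)$ rather than $u$, via $\BBS(v)=|\BBT(v)|^{p-1}\sign(\BBT(v))$, and takes $\CO=\{v\in L^1(\Gw):\norm{v}_{L_w^{N/(N-sp)}(\Gw)}\le t_0\}$. Since the exponent $N/(N-sp)$ is always $>1$ regardless of $p$, $\CO$ is a closed bounded convex subset of $L^1(\Gw)$ on the full range $1<p<N/s$. That one change of variables is exactly the idea your proof is missing.

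A secondary remark: continuity of the solution map is not merely a ``standard property of the monotone operator $L$'' when $1<p<2$. The paper proves it directly in that regime, using the pointwise inequality
\begin{align*}
\big(|a|^{p-2}a-|b|^{p-2}b\big)(a-b)\ \ge\ c(p)\,\frac{|a-b|^{2}}{(|a|+|b|)^{2-p}},
\end{align*}
a H\"older interpolation, and an a priori $W^{s,p}_0$-bound on $\BBT(v_n)$ (see \eqref{exisub7}--\eqref{exisub12}). For $p\ge2$ your one-line appeal to strong monotonicity is fine, but for $1<p<2$ this step needs the argument spelled out.
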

\begin{proof}
We shall use Schauder fixed point theorem to show the existence of a positive weak solution of \eqref{trans}.

Let $1<\kappa<\min\{\frac{N}{N-sp},\frac{d}{p-1}\}$ and $v\in L^1(\xO).$ Since $g\in L^\infty(\xO),$ we can easily show that the following problem
\ba\label{trans2} \left\{
\BAL
Lu&=g(|v|^{\frac{1}{p-1}}\sign(v))+\xr\tau\quad&&\text{in}\;\; \xO\\
u&=0\quad&&\text{in}\;\; \BBR^N\setminus\xO
\EAL
\right.
\ea
admits a unique weak solution $\BBT(v)\in W_0^{s,p}(\xO).$

We define the operator $\BBS$ by
	\be\label{Sn} \BBS(v):=|\BBT(v)|^{p-1}\sign(\BBT(v))\quad \forall  v \in L^{1}(\Gw).
	\ee
By \eqref{est7}, we obtain

\ba\label{bound}\BAL
\|\BBS(v)\|_{L_w^{\frac{N}{N-sp}}(\Gw)} &\leq C(s,p,N,\xL_K) \left(\xr\int_\xO|\tau|{\dd} x+\int_{\xO}|g(|v|^{\frac{1}{p-1}}\sign(v))|{\dd} x\right)\\
&\leq C(s,p,N,\xL_K)\left(\xr+\int_{\xO}g(|v|^{\frac{1}{p-1}})-g(-|v|^{\frac{1}{p-1}}){\dd} x\right).
\EAL
\ea

Let $v \in L_w^{\frac{N}{N-sp}}(\Gw)$. For any $\xl>0$, we set $E_\xl:=\{x\in\xO: |v(x)|^{\frac{1}{p-1}}>\xl\}$ and $e(\xl)=\int_{E_\xl}{\dd} x$. By \eqref{semi} and \eqref{equinorm}, we can easily show that
\bal
e(\xl)\leq C(N,s,p)\norm{v}_{L_w^{\frac{N}{N-sp}}(\Gw)}^{\frac{N}{N-sp}}\xl^{-\frac{N(p-1)}{N-sp}}.
\eal
By the above inequality and Lemma \ref{subcrcon} with $\xl_0=1$ and $\tilde q=\frac{N(p-1)}{N-sp},$ we deduce

\bal
\int_{\xO}g(|v|^{\frac{1}{p-1}})-g(-|v|^{\frac{1}{p-1}}){\dd} x&\leq 2a \int_{\xO} |v|^{\xk}{\dd} x +C(p,s,N)\norm{v}_{L_w^{\frac{N}{N-sp}}(\Gw)}^\frac{N}{N-sp} \xL_g.
\eal
Let $\xl=\norm{v}_{L_w^{\frac{N}{N-sp}}(\Gw)}.$ By \eqref{equinorm}, we have that

\bal
\int_{\xO} |v|^{\xk}{\dd}&=\int_0^\infty |\{x\in\xO:\;|v|\geq t\}|{\dd} t^\xk\\
&=\int_0^\xl  |\{x\in\xO:\;|v|\geq t\}|{\dd} t^\xk+\int_\xl^\infty  |\{x\in\xO:\;|v|\geq t\}|{\dd} t^\xk\\
&\leq |\xO|\xl^\xk+\xk\xl^{\frac{N}{N-sp}}\int_{\xl}^\infty t^{\xk-\frac{N}{N-sp}-1} {\dd} t \leq C(\xO,\xk,s,p,N)\xl^k.
\eal
Combining all above, we may prove that

\bal
\|\BBS(v)\|_{L_w^{\frac{N}{N-sp}}(\Gw)} \leq C(p,N,\xk,|\xO|,\xL_{g},\xL_K,a) \bigg(\xr+\norm{v}_{L_w^{\frac{N}{N-sp}}(\Gw)}^\frac{N}{N-sp}+\norm{v}_{L_w^{\frac{N}{N-sp}}(\Gw)}^\xk\bigg).
\eal
Therefore, if $\norm{v}_{L_w^{\frac{N}{N-sp}}(\Gw)}\leq t$ then
\ba \label{Qt0} \|\BBS(v)\|_{L_w^{\frac{N}{N-sp}}(\Gw)} \leq C\left(t^{\frac{N}{N-sp}}+t^{\xk} +\xr\right).
\ea
Since $1<\xk<\frac{N}{N-sp}$, there exist $t_0>0$ and $\rho_0>0$  depending on $|\Gw|,\Gl_g,p,\xk,N,a$ such that for any
$t\in(0,t_0]$ and $\rho \in (0,\rho_0),$ the following inequality holds
\bal
C\left(t^{\frac{N}{N-sp}}+t^{\xk}+\xr\right) \leq t_0,
\eal
where $C$ is the constant in \eqref{Qt0}. Hence,
	\bel{ul11}
\|v\|_{L_w^{\frac{N}{N-sp}}(\Gw)} \leq t_0  \Longrightarrow \|\BBS(v)\|_{L_w^{\frac{N}{N-sp}}(\Gw)}\leq t_0.
	\ee

	\noindent \textbf{Step 3:} We apply Schauder fixed point theorem to our setting.
	
\medskip
	
	\textit{We claim that $\BBS$ is continuous}. First we assume that $v_n\rightarrow v$ in $L^1(\Gw)$ and $\BBT(v_n)\to \BBT(v)$ in $W^{1,p}_0(\xO),$ then by fractional Sobolev inequality, we have

\ba\label{exisub1}\BAL
\int_\xO |\BBT(v_n)-\BBT(v)| {\dd} x&\leq |\xO|^{\frac{pN-N+sp}{N p}}\norm{\BBT(v_n)-\BBT(v)}_{L^{\frac{Np}{N-sp}}(\xO)}\\
&\leq C|\xO|^{\frac{pN-N+sp}{N p}}\norm{\BBT(v_n)-\BBT(v)}_{W^{1,p}_0(\xO)}\to 0.
\EAL
\ea

Let $k>0$ and $\xe>0,$ then

\ba\label{exisub2}\BAL
\int_\xO |\BBS(v_n)-\BBS(v)| {\dd} x&=\int_{\{x\in\xO:\;|\BBS(v_n)(x)|\leq k\}\cap\{x\in\xO:\;|\BBS(v)(x)|\leq k\}}\left|\BBS(v_n)-\BBS(v)\right|{\dd} x\\
&+\int_{\xO\setminus(\{x\in\xO:\;|\BBS(v_n)(x)|\leq k\}\cap\{x\in\xO:\;|\BBS(v)(x)|\leq k\})}\left|\BBS(v_n)(x)-\BBS(v)(x)\right|{\dd} x.
\EAL
\ea

By \eqref{bound} and the fact that $g\in L^\infty(\BBR),$ we have that $\BBS(v_n)\in L^{\xb}(\xO) $ and $\{\BBS(v_n)\}$ is uniformly bounded in $L^{ \xb}(\xO)$ for any $\xb\in (1,\frac{N}{N-sp}).$ Hence, there exists $k_0\in\BBN,$ such that
\ba\label{exisub3}\BAL
\int_{\xO\setminus(\{x\in\xO:\;|\BBS(v_n)(x)|\leq k\}\cap\{x\in\xO:\;|\BBS(v)(x)|\leq k\})}\left|\BBS(v_n)-\BBS(v)\right|{\dd} x\leq \frac{\xe}{3}\quad\forall k\geq k_0\quad\text{and}\quad n\in\BBN.
\EAL
\ea

Now, we set
\bal
A_{k_0,n}=\{x\in\xO:\;|\BBT(v_n)(x)|\leq k^{\frac{1}{p-1}}_0\}\cap\{x\in\xO:\;|\BBT(v)(x)|\leq k^{\frac{1}{p-1}}_0\}
\eal
and $B_{\xd,n}=\{x\in\xO:\;|\BBT(v)(x)-\BBT(v_n)(x)|\leq \xd\}.$ Then, we have that

\ba\label{exisub4}\BAL
&\int_{\xO\cap\{x\in\xO:\;|\BBS(v_n)|\leq k_0\}\cap\{x\in\xO:\;|\BBS(v)|\leq k_0\}}\left|\BBS(v_n)-\BBS(v)\right|{\dd} x\\
&=\int_{A_{k_0,n}\cap B_{\xd,n}}\left||\BBT(v_n)|^{p-1}\sign(\BBT(v_n))-|\BBT(v)|^{p-1}\sign(\BBT(v))\right|{\dd} x\\
&+\int_{A_{k_0,n}\setminus B_{\xd,n}}\left||\BBT(v_n)|^{p-1}\sign(\BBT(v_n))-|\BBT(v)|^{p-1}\sign(\BBT(v))\right|{\dd} x.
\EAL
\ea
Since $h(t)=t^{p-1}\sign(t)$ is uniformly continuous in $[-k_0,k_0],$ there exists $\xd_0>0$ independent of $n$ such that
\ba\label{exisub5}\BAL
\int_{A_{k_0,n}\cap B_{\xd_0,n}}\left||\BBT(v_n)|^{p-1}\sign(\BBT(v_n))-|\BBT(v)|^{p-1}\sign(\BBT(v))\right|{\dd}x\leq \frac{\xe}{3}.
\EAL
\ea
Moreover, by \eqref{exisub1}, there exists $n_0=n_0(\xd_0,k_0,p)\in \BBN$ such that
\ba\label{exisub6}
\int_{A_{k_0,n_0}\setminus B_{\xd_0,n_0}}\left||\BBT(v_{n_0})|^{p-1}\sign(\BBT(v_{n_0}))-|\BBT(v)|^{p-1}\sign(\BBT(v))\right|{\dd}x\leq \frac{\xe}{3}.
\ea
Hence, combining \eqref{exisub1}-\eqref{exisub6}, we obtain that $\BBS(v_n)\to\BBS(v)$ in $L^1(\xO).$

Therefore, it is enough to show that $\BBT(v_n)\to \BBT(v)$ in $W^{s,p}_0(\xO).$ In order to prove this, we will consider two cases.

\medskip

\emph{Case 1. $1<p<2.$} Let $M:=\sup_{t\in\BBR}|g(t)|.$ We will show that $\BBT(v_n)\to \BBT(v)$ in $W^{s,p}_0(\xO).$ Since $\BBT(v_n),\BBT(v)\in W_0^{s,p}(\xO)$ are weak solutions of \eqref{trans2} with $v_n$ and $v$ respectively, we have

\ba\label{55}
\BAL
\int_{\BBR^N}\int_{\BBR^N}&|\BBT(v_n)(x)-\BBT(v_n)(y)|^{p}K(x,y){\dd} x{\dd} y=\int_\xO\BBT(v_n)(g(|v_n|^{\frac{1}{p-1}}\sign(v_n)){\dd} x+\int_\xO\BBT(v_n)\tau {\dd} x\\
&\leq M|\xO|^{\frac{p-1}{p}}\left(\int_\xO|\BBT(v_n)|^{p} {\dd} x\right)^{\frac{1}{p}}
+\left(\int_\xO|\BBT(v_n)|^{p} {\dd} x\right)^{\frac{1}{p}}\left(\int_\xO|\tau|^{\frac{p}{p-1}} {\dd} x\right)^{\frac{p-1}{p}}\\
&\leq C_1(M,\xO,p,N,\tau,s) \left(\int_{\BBR^N}\int_{\BBR^N}\frac{|\BBT(v_n)(x)-\BBT(v_n)(y)|^{p}}{|x-y|^{N+sp}}{\dd} x{\dd} y\right)^{\frac{1}{p}}.
\EAL
\ea
Therefore,
\ba\label{54}
\int_{\BBR^N}\int_{\BBR^N}&\frac{|\BBT(v_n)(x)-\BBT(v_n)(y)|^{p}}{|x-y|^{N+sp}}{\dd} x{\dd} y\leq C_1^{\frac{p-1}{p}}(M,\xO,p,N,\tau,s,\xL_K).
\ea

Using $\xf=\BBT(v_n)-\BBT(v)$ as test function, we have
\ba\label{exisub11}\BAL
I&:=\int_{\BBR^N}\int_{\BBR^N}| \BBT(v_n)(x)-\BBT(v_n)(y)|^{p-2}(\BBT(v_n)(x)-\BBT(v_n)(y))\left(\xf(x)-\xf(y)\right)K(x,y){\dd} x{\dd} y\\
&-\int_{\BBR^N}\int_{\BBR^N}| \BBT(v)(x)-\BBT(v)(y)|^{p-2} (\BBT(v)(x)-\BBT(v)(y))\left(\xf(x)-\xf(y)\right)K(x,y){\dd} x{\dd} y\\
&=
\int_\xO\xf(g(|v_n|^{\frac{1}{p-1}}\sign(v_n))-g(|v|^{\frac{1}{p-1}}\sign(v))){\dd} x=:II.
\EAL
\ea

We first treat $I.$ On one hand, since $(|a|^{p-2}a-|b|^{p-2}b)(a-b)\geq C(p)\frac{|a-b|^2}{(|a|+|b|)^{2-p}}$ for any $(a,b)\in\BBR^{2N}\setminus\{(0,0)\}$ and $p\in(1,2),$ we have

\ba\label{exisub7}
I\geq C(p)\int_{\BBR^N}\int_{\BBR^N}\left|\xf(x)-\xf(y)\right|^2\left(|\BBT(v_n)(x)-\BBT(v_n)(y)|+|\BBT(v)(x)-\BBT(v)(y)|\right)^{p-2}K(x,y){\dd} x{\dd} y.
\ea
On the other hand, by H\"{o}lder inequality, we obtain

\ba\label{exisub8}\BAL
&\int_{\BBR^N}\int_{\BBR^N}\frac{\left|\xf(x)-\xf(y)\right|^p}{|x-y|^{N+sp}} {\dd} x {\dd} y\leq \xL_K\int_{\BBR^N}\int_{\BBR^N}\left|\xf(x)-\xf(y)\right|^pK(x,y) {\dd} x {\dd} y\\
&\leq C(p,\xL_K)\left(\int_{\BBR^N}\int_{\BBR^N}\left(|\BBT(v_n)(x)-\BBT(v_n)(y)|+|\BBT(v)(x)-\BBT(v)(y)|\right)^{p} K(x,y){\dd} x {\dd} y\right)^{\frac{2-p}{2}} I^\frac{p}{2}\\
&\leq C(p,C_1,\xO,\xL_K)I^\frac{p}{2},
\EAL
\ea
where $C_1$ is the constant in \eqref{54}. Hence, by \eqref{exisub7} and \eqref{exisub8}, we obtain

\ba\label{exisub9}
C\left(\int_{\BBR^N}\int_{\BBR^N}\frac{\left|\xf(x)-\xf(y)\right|^p}{|x-y|^{N+sp}} {\dd} x {\dd} y\right)^{\frac{2}{p}}\leq  I.
\ea

Next we treat $II.$ Let $r=\frac{N p}{N-sp},$ proceeding as in the proof of \eqref{55}, we have
\ba\label{exisub10}\BAL
&II\leq \left(\int_\xO|\xf|^{r}{\dd}\right)^{\frac{1}{r}}
\left(\int_\xO|g(|v_n|^{\frac{1}{p-1}}\sign(v_n))-g(|v|^{\frac{1}{p-1}}\sign(v))|^{r'}{\dd}\right)^{\frac{1}{r'}}\\
&\leq C(N,p,s) \left(\int_{\BBR^N}\int_{\BBR^N}\frac{\left|\xf(x)-\xf(y)\right|^p}{|x-y|^{N+sp}} {\dd} x {\dd} y\right)^{\frac{1}{p}} \\
&\qquad\qquad\times\left(\int_\xO|g(|v_n|^{\frac{1}{p-1}}\sign(v_n))-g(|v|^{\frac{1}{p-1}}\sign(v))|^{r'}{\dd}\right)^{\frac{1}{r'}},
\EAL
\ea
where in the last inequality we used the fractional Sobolev inequality.

 Combining \eqref{exisub11}, \eqref{exisub9} and \eqref{exisub10}, we obtain

\ba\label{exisub12}\BAL
&\left(\int_{\BBR^N}\int_{\BBR^N}\frac{\left|\xf(x)-\xf(y)\right|^p}{|x-y|^{N+sp}} {\dd} x {\dd} y\right)^{\frac{1}{p}}\\
&\qquad\qquad\qquad\leq C(p,C_1,\xO,s,\xL_K)\left(\int_\xO|g(|v_n|^{\frac{1}{p-1}}\sign(v_n))-g(|v|^{\frac{1}{p-1}}\sign(v))|^{r'}{\dd} \right)^{\frac{1}{r'}}.
\EAL
\ea
Since $g\circ(|\cdot|^{p-1}\sign(\cdot))$ is uniformly continuous in $\BBR,$ bounded and $v_n\to v$ in $L^1(\xO),$ we obtain
\bal
\lim_{n\to\infty}\int_{\xO}|g(|v|^{\frac{1}{p-1}}\sign(v))-g(|v_n|^{\frac{1}{p-1}}\sign(v_n))|^{r'}{\dd} x=0,
\eal
which, together with \eqref{exisub12}, implies the desired result.

\medskip

\emph{Case 2. $p\geq 2.$} We note here that $(|a|^{p-2}a-|b|^{p-2}b)(a-b)\geq C(p)|a-b|^p$ for any $(a,b)\in\BBR^{2N}$ and $p\geq2.$ Thus,

\bal
I\geq C(N,p,\xL_K)\left(\int_{\BBR^N}\int_{\BBR^N}\frac{\left|\xf(x)-\xf(y)\right|^p}{|x-y|^{N+sp}} {\dd} x {\dd} y\right).
\eal
By using a similar argument to the one in Case 1, we may show that $\BBT(v_n)\to \BBT(v)$ in $W^{s,p}_0(\xO).$

\medskip
	
\textit{Next we claim that $\BBS$ is compact}. Indeed, let $\{v_n\}$ be a sequence in $ L^1(\Gw)$ then by \eqref{54},
we obtain that $\BBT(v_n)$ is uniformly bounded in $W^{s,p}_0(\xO).$ Hence there exists  a subsequence still denoted by $\{\BBT(v_n)\}$ such that $\BBT(v_n)\rightharpoonup \psi$ in $W^{s,p}_0(\xO)$ and  $\BBT(v_n)\to \psi$ a.e. in $\BBR^N.$ Furthermore, in view of \eqref{bound}, we can easily show that $\BBS(v_n)=|\BBT(v_n)|^{p-1}\sign(\BBT(v_n))\to |\psi|^{p-1}\sign(\psi)$ in $L^1(\xO).$
	
Now set
	\be
	\CO:=\{ v \in L^1(\Gw):\; \norm{v}_{L_w^{\frac{N}{N-sp}}(\Gw)} \leq t_0  \}.\label{O}
	\ee
	Then $\CO$ is a closed, convex subset of $L^1(\Gw)$ and by \eqref{ul11}, $\BBS(\CO) \sbs \CO$.
	Thus we can apply Schauder fixed point theorem to obtain the existence of a function $v \in \CO$ such that $\BBS(v)=v$. This means that $u=v^{\frac{1}{p-1}}\sign(v)$ is a solution of \eqref{trans} satisfying \eqref{estM}.

\end{proof}

\begin{proof}[\textbf{Proof of Theorem \ref{sourcegeneral}}]
Let $\{\xr_n\}_{n=1}^\infty$ be a sequence of mollifiers. Set $\tau_n=\xr_n*\tau$ and $g_n=\max(-n,\min(g,n)).$ Then $g_n$ satisfies \eqref{cond} with the same constant $\xL_g.$ Thus, there exists a weak solution $u_n\in W^{s,p}_0(\xO)$ of
\bal\left\{
\BAL
Lv&=g_n(v)+\xr\tau_n\quad&&\text{in}\;\; \xO\\
v&=0\quad &&\text{in}\;\; \BBR^N\setminus\xO.
\EAL
\right.
\eal
In addition, it satisfies
	\ba\label{estM1}
\||u_n|^{p-1}\|_{L_w^{\frac{N}{N-sp}}(\Gw)} \leq t_0,
	\ea
where $t_0>0$ depends on $N,\Gw,\Gl_g,\xL_K,a,s,p,d.$

By \eqref{estM1}, we have that $|\{x\in\xO:\;|u_n|>s\}|\leq t_0^{\frac{N}{N-sp}}s^{-\frac{N(p-1)}{N-sp}}.$ Hence by Proposition \ref{subcrcon}
\bal
\int_\xO |g_n(u_n)|{\dd} x\leq C\quad\forall n\in \BBN,
\eal
where $C$ depends only on $N,\Gw,\Gl_g,\xL_K,a,s,p,d$ and $t_0.$ This, together with Proposition \ref{lpest}, implies that  for any $q\in(p-1,\frac{N(p-1)}{N-s})$ and $h\in(0,s),$ there exists a positive constant  $c=c(N,s,p,\xL_K,s,h,q,|\xO|)$ such that

\ba\label{est2nonlinearsourceb}\BAL
\left(\int_{\BBR^N}\int_{\BBR^N}\frac{|u_n(x)-u_n(y)|^q}{|x-y|^{N+hq}}{\dd} x{\dd} y\right)^\frac{1}{q}\leq c(C+\xr\int_\xO|\tau_n|{\dd} x)^\frac{1}{p-1}.
\EAL
\ea
Therefore, in view of the proof of Proposition \ref{sola}, we may show that there exists a subsequence, still denoted by the same notation, such that  $u_n\rightarrow u$ in $W^{h,q}(\BBR^N)$ and a.e. in $\BBR^N.$

Now, we will show that $g_n(u_n)\to g(u)$ in $ L^1(\xO).$ We will prove it by using Vitali's convergence theorem. Let $E\subset \xO$ be a Borel set. Then, by Lemma \ref{subcrcon} and \eqref{estM1}, we have
\bal
\int_E |g_n(u_n)|{\dd} x&\leq \int_\xO |g(u_n)|{\dd} x\\
&\leq (g(s_0)-g(-s_0))|E| + C(t_0,p,\xL_g,N)\int_{s_0}^\infty (g(s)-g(-s))s^{-1-\frac{(p-1)N}{N-sp}}{\dd} s,\;\;\forall s_0\geq1.
\eal
Let $\xe>0,$ then there exists $s_0$ such that

\bal
 C(t_0,p,\xL_g,N)\int_{s_0}^\infty (g(s)-g(-s))s^{-1-\frac{(p-1)N}{N-sp}}{\dd} s\leq \frac{\xe}{2}.
\eal
Set $\xd=\frac{\xe}{2(1+g(s_0)-g(-s_0))}>0.$ Then for any Borel set $E$ with $|E|\leq \xd,$ we have
\bal
g(s_0)|E|\leq \frac{\xe}{2}.
\eal
Hence, by the last three inequalities, we may invoke Vitali's convergence theorem in order to prove that $g_n(u_n)\to g(u)$ in  $L^1(\xO).$

In view of the proof of Proposition \ref{sola}, we may deduce that $u$ is a very weak solution of \eqref{trans0}. Furthermore, by Fatou's lemma, we can easily show that $u$ satisfies \eqref{estM2} and \eqref{est2nonlinearsource}.
\end{proof}

\subsection{Power nonlinearities: Proof of Proposition \ref{exissource} and Theorem \ref{supercrpower1}}\label{secsourcepower}

\begin{proof}[\textbf{Proof of Proposition \ref{exissource}}]
Let $w=ACW_{s,p}^{2\diam(\xO)}[\xr\tau],$ where $C$ is the constant in \eqref{est3} and $A>1$ is a constant that will be determined later. Set ${\dd}\xn=w^\xk {\dd} x+\xr \dd\tau,$ then by \eqref{subcriticalcondition}, we obtain
\bal
CW_{s,p}^{2(\diam(\xO))}[\xn]&\leq 2^{\frac{1}{p-1}}C(W_{s,p}^{2\diam(\xO)}[w^\xk]+W_{s,p}^{2\diam(\xO)}[\xr \dd\tau])\\
&\leq 2^{\frac{1}{p-1}}C((AC)^\frac{\xk}{p-1}\xr^{\frac{\xk}{(p-1)^2}}MW_{s,p}^{2\diam(\xO)}[\tau]+W_{s,p}^{2\diam(\xO)}[\xr \dd\tau])\\
&\leq 2^{\frac{1}{p-1}}C((AC)^\frac{\xk}{p-1}M\xr^{\frac{\xk-p+1}{(p-1)^2}}+1)W_{s,p}^{2\diam(\xO)}[\xr \dd\tau].
\eal
If we choose $A=2^{\frac{1}{p-1}+1}$ and $\xr$ small enough such that $(AC)^\frac{\xk}{p-1}M\xr^{\frac{\xk-p+1}{(p-1)^2}}+1<2,$ we deduce that
\ba\label{16}
CW_{s,p}^{2\diam(\xO)}[\xn]\leq w.
\ea

Now, let $x_0\in\xO$ be such that $W_{s,p}^{2\diam(\xO)}[\tau](x_0)<\infty.$ If $0\leq v\leq c_0W_{s,p}^{2\diam(\xO)}[\tau]$ a.e. in $\BBR^N,$ for some constant $c_0>0,$ then we have
\bal
\left(\int_{\xO}|v|^\xk {\dd} x\right)^{\frac{1}{p-1}} \leq\left(\int_{B_{\diam(\xO)}(x_0)}|v|^\xk {\dd} x\right)^{\frac{1}{p-1}}\leq C(\xO,N,s,p,M,K,c_0)W_{s,p}^{2\diam(\xO)}[\tau](x_0)<\infty.
\eal
Thus $v\in L^\xk(\xO).$

Let $u_0\geq0$ be a very weak solution of 
\bal\left\{
\BAL
L u_0&=\xr\tau\quad&&\text{in}\;\; \xO\\
v&=0\quad\quad &&\text{in}\;\; \BBR^N\setminus\xO,
\EAL\right.
\eal
satisfying $u_0\leq CW_{s,p}^{2\diam(\xO)}[\xr\tau].$ We may construct a nondecreasing sequence $\{u_n\}_{n\geq0},$ such that $u_n$ is a very weak solution to problem
\bal\left\{
\BAL
L u_n&=u_{n-1}^\xk+\xr\tau\quad&&\text{in}\;\; \xO\\
v&=0\quad\quad &&\text{in}\;\; \BBR^N\setminus\xO
\EAL\right.
\eal
and satisfies $0\leq u_n\leq CW_{s,p}^{2\diam(\xO)}[\xm_{n-1}]$ for any $n\in\BBN,$ where ${\dd}\xm_{n-1}=u_{n-1}^\xk {\dd} x+\xr{\dd}\tau.$
 In addition, by \eqref{16} and Proposition \ref{nwolffest} there holds

\ba\label{18}
C^{-1} W_{s,p}^{\frac{d(x)}{8}}[\xm_{n-1}](x)\leq u_n(x)\leq w(x)\quad \;\text{for a.e.}\;\;x\in\xO,
\ea
where the positive constant $C^{-1}$ depends only on $N,p,s,q.$ Finally, $u_n$ satisfies \eqref{estn1}-\eqref{est2} with ${\dd}\xm=w^\xk {\dd} x+\xr \dd\tau.$

Proceeding as in the proof of Proposition \eqref{sola}, we may show that there exists a subsequence, still denoted by $\{u_n\},$ such that $u_n\to u$  a.e. in $\BBR^N$ and $u$ is a very weak solution of problem \eqref{powersource}.  By \eqref{18} and Fatou's Lemma, we obtain estimate \eqref{estsourcewolff}. The proof is complete.
\end{proof}

\begin{proof}[\textbf{Proof of Theorem \ref{supercrpower1}}]
We will first prove that $(i)$ implies $(ii)$ by using some ideas from \cite{PVsource}. Without loss of generality we assume that $\xr=1.$ Extend $\xm$ to whole $\BBR^N$ by setting $\xm(\BBR^N\setminus \xO)=0.$

Let $0\leq g\in L^\frac{\xk}{p-1}(\BBR^N;\xm).$ We set
\bal
M_\xm g(x):=\sup_{r>0,\;\xm(B(x,r))\neq0}\xm(B(x,r))^{-1}\int_{B(x,r)}g(y){\dd} \xm.
\eal

It is well known that there exists a positive constant $c_1$ depending only on $N,p,\xk$ such that

\ba
\int_{\BBR^N} (M_\xm g(x))^\frac{\xk}{p-1}{\dd} \xm\leq c_1 \int_{\BBR^N} |g(x)|^\frac{\xk}{p-1}{\dd} \xm
\ea
(see, e.g., \cite{F}).
Also,

\ba\label{74}\BAL
\int_\xO&\left(W_{s,p}^{\frac{d(x)}{8}}[g\xm](x)\right)^\xk{\dd} x\leq \int_\xO\left(W_{s,p}^{\frac{d(x)}{8}}[\xm](x)\right)^\xk(M_\xm g(x))^\frac{\xk}{p-1}{\dd} x\\
&\leq C^\xk \int_{\xO}u^\xk(x) (M_\xm g(x))^\frac{\xk}{p-1}{\dd} x\leq C^\xk\int_\xO  (M_\xm g(x))^\frac{\xk}{p-1}{\dd} \xm\leq c_2\int_{\BBR^N} |g(x)|^\frac{\xk}{p-1}{\dd} \xm
\EAL
\ea

Let $K=\supp\tau.$ By the assumption, we have that $r_0:=\dist(K,\partial\xO)>0.$ Set $g=\1_K \tilde g,$ for any nonnegative $\tilde g\in L^\frac{\xk}{p-1}(\BBR^N;\xm_{\lfloor K}).$ We first note that $B_{\frac{r_0}{8}}(x)\cap K=\emptyset$ if $x\in\xO$ with $d(x)<\frac{r_0}{8}$ or  if $x\in\BBR^N\setminus \xO,$ which implies
\bal
W_{s,p}^{\frac{d(x)}{24}}[\tilde g\xm_{\lfloor K}](x)=0
\eal
if $x\in\xO$ with $d(x)<\frac{r_0}{24}$ or  if $x\in\BBR^N\setminus \xO.$ Therefore, by the above equality and \eqref{74}, we have
\bal
\int_{\BBR^N}\left(W_{s,p}^{\frac{r_0}{24}}[\tilde g\xm_{\lfloor K}](x)\right)^\xk {\dd} x\leq \int_\xO\left(W_{s,p}^{\frac{d(x)}{8}}[g\xm](x)\right)^\xk{\dd} x\leq c_2\int_{\BBR^N} |\tilde g(x)|^\frac{\xk}{p-1}{\dd} \xm_{\lfloor K}.
\eal

Also, by \cite[Theorem 2.3]{BNV} (see also \cite[Corollary 3.6.3]{Adbook}), we have

\ba\label{59}
\int_{\BBR^N}\left(W_{s,p}^{\frac{r_0}{24}}[\tilde g\xm_{\lfloor K}](x)\right)^\xk{\dd} x\approx\int_{\BBR^N}(\BBG_{sp}[\tilde g\xm_{\lfloor K}])^\frac{\xk}{p-1}{\dd} x,
\ea
where the implicit constant depends only on $s,p,N,\xk$ and $r_0.$

Hence, combining the last two displays, we may show that there exists a positive constant $c_3=c_3(N,p,s,\xk,r_0)$ such that

\ba\label{20}
\int_{\BBR^N}(\BBG_{sp}[\tilde g\xm_{\lfloor K}])^\frac{\xk}{p-1}{\dd} x \leq c_3\int_{\BBR^N} |\tilde g(x)|^\frac{\xk}{p-1}{\dd} \xm_{\lfloor K}.
\ea
Let $f\in L^{\frac{\xk}{\xk-p+1}}(\BBR^N).$ Then, for any $\tilde g\in L^\frac{\xk}{p-1}(\BBR^N;\xm_{\lfloor K}),$ there holds

\bal
\left|\int_{\BBR^N} f(x) G_{sp}*(\tilde g\xm_{\lfloor K})(x) {\dd} x\right|&=\left|\int_{\BBR^N}\tilde g(y) G_{sp}*f(y){\dd}\xm_{\lfloor K}\right|\\
&\leq C_1\norm{f}_{L^{\frac{\xk}{\xk-p+1}}(\BBR^N)}\norm{\tilde g}_{L^\frac{\xk}{p-1}(\BBR^N;\xm_{\lfloor K})}.
\eal
The last inequality implies,
\bal
 \int_{\BBR^N}|G_{sp}*f(x)|^{\frac{\xk}{\xk-p+1}}{\dd} \xm_{\lfloor K}\leq c_4\int_{\BBR^N}|f|^{\frac{\xk}{\xk-p+1}}{\dd} x\quad\forall f\in L^{\frac{\xk}{\xk-p+1}}(\BBR^N).
 \eal
By \cite[Theorem 7.2.1]{Adbook}, the above inequality is equivalent to
\ba\label{19}
\xm_{\lfloor K}(F)\leq c_5 \mathrm{Cap}_{{sp,\frac{\xk}{\xk-p+1}}}(F),
\ea
for any compact $F\subset\BBR^N.$ \eqref{caocon2} follows by the above inequality and the fact that $\tau\leq \xm_{\lfloor K}.$

Next, we prove that (ii) implies (iii). We note that proceeding as above, in the opposite direction, we may prove that \eqref{caocon2} implies

\bal
\int_{\BBR^N}(\BBG_{sp}[ \tilde g\tau])^\frac{\xk}{p-1}{\dd} x \leq c_3\int_{\BBR^N} |\tilde g(x)|^\frac{\xk}{p-1}{\dd} \tau,\quad\forall g\in L^\frac{\xk}{p-1}(\BBR^N;\tau).
\eal

By \eqref{59} and taking $\tilde g=\1_B,$ we can easily show that there exists a positive constant $C$ depending only on $N,s,p,\xO$ such that

\bal
\int_{\BBR^N} (W_{s,p}^{2\diam(\xO)}[\tau_{\lfloor B}])^\xk {\dd} x \leq C\tau(B).
\eal

We will show that (iii) implies (iv). Let $R=2\diam(\xO)$ and $C_3$ be the constant in \eqref{57}. In the spirit of the proof of \cite[Theorem 2.10]{PV}, we need to prove that there exists a positive constant $c_0=c_0(N,p,\xk,s,C_3, R,\tau(\xO))>0$ such that

\ba\label{anistau}
\tau(B_t(x))\leq c_0t^{\frac{\xk(N-sp)-N(p-1)}{\xk-p+1}}
\ea
for any $t\leq R$ and $\forall x\in \xO.$

Concerning the proof of the above inequality, we first note that for any $y\in B_t(x)$ and $t\leq \frac{R}{4},$ there holds

\bal
W_{s,p}^{R}[\tau_{\lfloor B_t(x)}](y)&=\int_0^{R}\left(\frac{\tau(B_r(y)\cap B_t(x))}{r^{N-sp}}\right)^{\frac{1}{p-1}}\frac{{\dd} r}{r}\geq \int_{2t}^{4t}\left(\frac{\tau(B_r(y)\cap B_t(x))}{r^{N-sp}}\right)^{\frac{1}{p-1}}\frac{{\dd} r}{r}\\
&\geq C(N,p,s)\left(\frac{\tau(B_t(x))}{t^{N-sp}}\right)^{\frac{1}{p-1}}.
\eal
By the above inequality, we deduce

\ba\label{58}\BAL
t^NC^\xk(N,p,s)\left(\frac{\tau(B_t(x))}{t^{N-sp}}\right)^{\frac{\xk}{p-1}}&\leq \int_{B_t(x)}(W_{s,p}^{R}[\tau_{\lfloor B_t(x)}](y))^\xk {\dd}  y\\
&\leq C_3\tau(B_t(x)),\quad\forall t\in(0\frac{R}{4}],
\EAL
\ea
where in the last inequality we used \eqref{57}. This implies \eqref{anistau}.

For any $x\in\xO$ and $t<R,$ we set

\bal
\xn_t(x):=\int_{B_t(x)}\bigg(\int_0^t\left(\frac{\tau(B_r(y))}{r^{N-sp}}\right)^{\frac{1}{p-1}}\frac{{\dd} r}{r}\bigg)^\xk{\dd} y
\eal
and
\bal
\xm_t(x):=\int_{B_t(x)}\bigg(\int_t^{R}\left(\frac{\tau(B_r(y))}{r^{N-sp}}\right)^{\frac{1}{p-1}}\frac{{\dd} r}{r}\bigg)^\xk{\dd} y.
\eal
Then we can easily prove that
\ba\label{69}
W_{s,p}^{R}[(W_{s,p}^{R}[\tau])^\xk]\leq C(q,p)\bigg(\int_0^{R}\left(\frac{\xn_t(x)}{t^{N-sp}}\right)^\frac{1}{p-1}\frac{{\dd} t}{t} +\int_0^{R}\left(\frac{\xm_t(x)}{t^{N-sp}}\right)^\frac{1}{p-1}\frac{{\dd} t}{t}\bigg).
\ea

Now, we treat the first term on the right hand in $\eqref{69}.$ By \eqref{57}, we have
\ba\label{70}\BAL
\xn_t(x)&=\int_{B_t(x)}\bigg(\int_0^t\left(\frac{\tau(B_r(y)\cap B_{2t}(x))}{r^{N-sp}}\right)^{\frac{1}{p-1}}\frac{{\dd} r}{r}\bigg)^\xk{\dd} y\leq C\tau(B_{2t}(x)),
\EAL
\ea
which implies

\ba\label{71}
\int_0^{R}\left(\frac{\xn_t(x)}{t^{N-sp}}\right)^\frac{1}{p-1}\frac{{\dd} t}{t}\leq CW_{s,p}^{2R}[\tau](x).
\ea

Next, we treat the second term on the right hand in $\eqref{69}.$ First we note that
\bal
\xm_t(x)&\leq
\int_{B_t(x)}\bigg(\int_t^{R}\left(\frac{\tau(B_{2r}(x))}{r^{N-sp}}\right)^{\frac{1}{p-1}}\frac{{\dd} r}{r}\bigg)^\xk{\dd} y\\
&\leq C(N) t^N\bigg(\int_t^{2R}\left(\frac{\tau(B_{r}(x))}{r^{N-sp}}\right)^{\frac{1}{p-1}}\frac{{\dd} r}{r}\bigg)^\xk\\
&=:C(N)t^N\xm_{1,t}^\xk(x),
\eal
which implies

\bal
&\int_0^{R}\left(\frac{\xm_t(x)}{t^{N-sp}}\right)^\frac{1}{p-1}\frac{{\dd} t}{t}\leq C(N,p) \int_0^{R}\xm_{1,t}^\frac{\xk}{p-1}(x)t^{\frac{sp}{p-1}-1}{\dd} t\\
&=C(N,p,s,q)\bigg(\xm_{1,R}^{\frac{\xk}{p-1}}(x)R^{\frac{sp}{p-1}}+
\int_0^{R}\left(\xm_{1,t}(x)\right)^{\frac{\xk}{p-1}-1}t^{\frac{sp}{p-1}}
\left(\frac{\tau(B_t(x))}{t^{N-sp}}\right)^{\frac{1}{p-1}}\frac{{\dd} t}{t}\bigg),
\eal
where we have used integration by parts in the last equality. By \eqref{anistau}, we have
\bal
\xm_{1,t}(x)=\int_t^{2R}\left(\frac{\tau(B_{r}(x))}{r^{N-sp}}\right)^{\frac{1}{p-1}}\frac{{\dd} r}{r}\leq C t^{-\frac{sp}{\xk-p+1}}.
\eal
Combining the last two displays, we obtain

\ba\label{72}
\int_0^{R}\left(\frac{\xm_t(x)}{t^{N-sp}}\right)^\frac{1}{p-1}\frac{{\dd} t}{t}\leq C(N,p,s,\xk,R)\bigg(\tau(B_{2R}(x))^{\frac{\xk}{(p-1)^2}}+W_{s,p}^{R}[\tau](x)\bigg).
\ea

The desired result follows by \eqref{69}, \eqref{71}, \eqref{72} and the fact that 
\bal
\tau(B_{2R}(x))^{\frac{\xk}{(p-1)^2}}&\leq \tau(\xO)^{\frac{\xk-p+1}{(p-1)^2}}\tau(B_{\frac{R}{2}}(x))^{\frac{1}{p-1}}\leq C(R,N,p,\tau,s,\xk)W_{s,p}^{R}[\tau](x)\quad\forall x\in\xO.
\eal

\end{proof}

\end{document}